 \theoremstyle{plain}
      \newtheorem{theorem}{Theorem}[section]
      \newtheorem{lemma}[theorem]{Lemma}
      \newtheorem{coro}[theorem]{Corollary}
      \newtheorem{prop}[theorem]{Proposition}
      \newtheorem{defi}[theorem]{Definition}
      \numberwithin{equation}{section}
      \theoremstyle{plain}
      \newtheorem{remark}[theorem]{Remark}
\begin{document}

%



  \author{Adrien Boyer } 
   \thanks{ aadrien.boyer@gmail.com, Technion, Haifa, Israel.\\
   This work is supported by ERC Grant 306706. }

   \title[Square root of Poisson kernel 
   in $\delta$-hyperbolic spaces]{A theorem \`a la Fatou for the square root of Poisson kernel 
   in $\delta$-hyperbolic spaces and decay of matrix coefficients of boundary representations}

   \begin{abstract}
 In this note, we prove a theorem \`a la Fatou for the square root of Poisson Kernel in the context of quasi-convex cocompact discrete groups of isometries of $\delta$-hyperbolic spaces. As a corollary we show that some matrix coefficients of boundary representations cannot satisfy \emph{the weak inequality of Harish-Chandra}. Nevertheless, such matrix coefficients satisfy an inequality which can be viewed as a particular case of the inequality coming from \emph{property RD} for boundary representations. The inequality established in this paper is based on a uniform bound which appears in the proof of the irreducibility of boundary representations. Moreover this uniform bound can be used to prove that the Harish-Chandra's Schwartz space associated with some discrete groups of isometries of $\delta$-hyperbolic spaces carries a natural structure of a convolution algebra.
  Then in the context of CAT(-1) spaces we show how our elementary techniques enable us to apply an equidisitribution theorem of T. Roblin to obtain informations about the decay of matrix coefficient of boundary representations associated with continuous functions.  
    \end{abstract}

   \subjclass[2010]{Primary 31A20, 37F35, 20F67; Secondary 42A20, 43A90, 37A35, 22AD10, 22AD15}

   \keywords{Theorem \`a la Fatou, quasi-conformal densities, square root of the Poisson kernel, boundary representations, Harish-Chandra functions, decay of matrix coefficients, Harish-Chandra's Schwartz space of discrete groups, property RD}


   \dedicatory{}



   \maketitle

\section{Introduction}
On the unit disc $\mathbb{D}$, the Poisson transform of a function $f$ in $L^{1}(\partial \mathbb{D},\frac{{\rm d}\theta}{2\pi})$ is defined by the formula $$Pf(z)=\int_{\partial \mathbb{D}} f(\theta)P(z,\theta) \frac{ {\rm d}\theta}{2\pi},$$ where $P(z,\theta)$ is the standard Poisson kernel and $\frac{{\rm d}\theta}{2\pi}$ is the normalized Lebesgue measure on the unit circle. It is well known that the harmonic function $Pf$   converges to $f$ for almost
all $\theta \in \partial \mathbb{D}$ as $z\rightarrow {\rm e}^{i \theta}$ and $z$ stays in the nontangential cone which one can define by
the inequality $| {\rm arg }(z) -\theta| < C (1 - |z|)$ where $C$ is a positive constant.

However, the situation is different when we consider \emph{the (normalized) square root of the Poisson kernel}. Let $f$ in $L^{1}(\partial \mathbb{D},\frac{{\rm d}\theta}{2\pi})$ and define the function $\mathcal{P}_{0}f$ by the formula $$\mathcal{P}_{0}f(z)=\int_{\partial \mathbb{D}} f(\theta)\frac{P^{\frac{1}{2}}(z,\theta)}{\int_{\partial X}P^{ \frac{1}{2} }(z,\theta) \frac{ {\rm d}\theta}{2\pi}}  \frac{ {\rm d}\theta}{2\pi} \cdot$$ 
\\
P. Sj\"ogren proved in a short note \cite{Sjo2} that the function $\mathcal{P}_{0}f$  converges to $f$ for almost
all $\theta \in \partial \mathbb{D}$ as $z\rightarrow {\rm e}^{i \theta}$ and $z$ stays in the weak nontangential domain which one can define by
the inequality $$| {\rm arg }(z) -\theta| < C (1 - |z|) \bigg(  \log \frac{1}{1-|z|}\bigg)$$ for some positive constant $C>0$.

These convergence results, also called \emph{theorem \`a la Fatou}, have been studied in \cite{Sjo}, \cite{Ron1} and generalized to the context of symmetric spaces in \cite{M}, \cite{Ron2} and in \cite{Sjo2}. Generalizations of theorems \emph{\`a la Fatou} have already been done in the context of CAT(-1) spaces or negative curvature as we can see in \cite{Ro} or in \cite{Alv}. We generalize this \emph{theorem \`a la Fatou for the normalized square root of the  Poisson kernel} to the context of Patterson-Sullivan density associated with quasi-convex cocompact discrete group of isometries of $\delta$-hyperbolic spaces. More precisely, this work is based on the framework of Patterson-Sullivan measures developed by M. Coornaert in \cite{Coo} for general $\delta$-hyperbolic spaces. Besides, the Poisson kernel being the conformal factor of an isometry acting on the boundary, we make connections with boundary representations and this theorem \`a la Fatou. We obtain information about decay of matrix coefficients associated with boundary representations and we will also deal with analogs of \emph{Harish-Chandra's function}, \emph{Harish-Chandra's weak inequality} and \emph{Harish-Chandra's Schwartz algebras} in the context of discrete groups of isometries of $\delta$-hyperbolic spaces.\\

\subsection{Notation and statement of the results}
 
Let $\Gamma$ be a nonelementary discrete group of isometries of a metric space $(X,d)$ which is $\delta$-hyperbolic. We denote by $\partial X$ its Gromov boundary and let $\overline{X}$ be the topological space $X \cup \partial X$ endowed with its usual topology that makes $\overline{X}$ compact. 

It is well known (see \cite{Gro} and \cite[Chapter III.H, Proposition 3.21]{BH}, and see Section \ref{prelim} in this paper) that the topology on $\partial X$ is metrizable: once a base point $x$ in $X$ has been fixed, there exists a parametrized family $(d^{\epsilon}_{x})_{\epsilon>0}$  of metrics each of which are called a \emph{visual metric} of visual parameter $\epsilon$.

Recall the critical exponent $\alpha(\Gamma)$ of $\Gamma$: $$\alpha(\Gamma):=\inf \left\{s\in \mathbb{R}^{*}_{+} | \sum_{\gamma \in \Gamma} {\rm e}^{-sd(\gamma x,x)} <\infty \right\}.$$Notice that the definition of $\alpha(\Gamma)$ does not depend on $x$. We assume from here on now that $\alpha(\Gamma)<\infty$.

The limit set of $\Gamma$ denoted by $\Lambda_{\Gamma}$ is the set of all accumulation points in $\partial X$ of an orbit. Namely $\Lambda_{\Gamma}:=\overline{\Gamma x}\cap \partial X$, with the closure in $\overline{X}$. Notice that the limit set does not depend on the choice of $x\in X$. Following the notations in \cite{CM}, define the geodesic hull $GH(\Lambda_{\Gamma})$ as the union of all geodesics in $X$ with both endpoints in $\Lambda_{\Gamma}$. The convex hull of $\Lambda_{\Gamma}$ denoted by $CH(\Lambda_{\Gamma})$, is the smallest subset of $X$ containing $GH(\Lambda_{\Gamma})$ with the property that every geodesic segment between any pair of points $x$ and $y$ in CH$(\Lambda_{\Gamma})$ also lies in $CH(\Lambda_{\Gamma})$. We say that $\Gamma$ is \emph{quasi-convex cocompact} (respectively \emph{convex cocompact}) if it acts cocompactly on  $GH(\Lambda_{\Gamma})$ (respectively on $CH(\Lambda_{\Gamma})$). 
 
The foundations of Patterson-Sullivan measures theory are in the important papers \cite{Pa} and \cite{Su}. See \cite{Bou},\cite{BMo} and \cite{Ro} for more general results in the context of CAT(-1) spaces. These measures are also called \textit{conformal densities}.
However, in this paper we will consider a more general measure class, called \emph{quasi-conformal densities}. \\
We denote by $M(Z)$ the Banach space of Radon measures on a locally compact space $Z$, which is identified as the dual space of compactly supported functions denoted by $C_{c}(Z)^*$, endowed with the norm $\|\mu\|=\sup\lbrace |\int_{Z}f d\mu|, \|f\|_{\infty}\leq 1,f \in C_{c}(Z) \rbrace$ where $\|f\|_{\infty}=\sup_{ z\in Z} |f(z)|$.
Recall that $\gamma_{*}\mu$ means $\gamma_{*}\mu(B)=\mu(\gamma^{-1}B)$ where $\gamma$ is in $\Gamma$ and $B$ is a borel subset of $Z$. 
\begin{defi}\label{quasiconform}
We say that $\mu$ is a \emph{$\Gamma$-invariant quasi-conformal density  of dimension $\alpha \geq 0$}, if $\mu$ is a map which satisfies the following conditions:
\begin{enumerate}
	\item $\mu$ is a map from $x\in X \mapsto \mu_{x} \in M(\overline{X})$, i.e. $\mu_{x}$ is a positive finite measure (density).
	\item There exists $C_{q}>0$ such that for all $x$ and $y$ in $X$, $\mu_{x}$ and $\mu_{y}$ are equivalent and they satisfy almost everywhere: $$  C_{q}^{-1}{\rm e}^{\alpha \beta_{v}(x,y)}\leq \frac{d\mu_{y}}{d\mu_{x}}(v)\leq C_{q}{\rm e}^{\alpha \beta_{v}(x,y)}$$ (quasi-conformal of dimension $\alpha$).
	\item For all $\gamma \in \Gamma$ and for all $x \in X$ we have $\gamma_{*}\mu_{x}=\mu_{\gamma x}$  (invariant),
\end{enumerate}
where $\beta_{v}(x,y)$ denotes a Busemann function associated with $x$ to $y$ relative to $v$ (see Section \ref{prelim}).

Moreover if $$(x,y,v) \in X\times X \times \partial X \mapsto  \frac{d\mu_{y}}{d\mu_{x}}(v)$$ is continuous, we say that $\mu$ is a \emph{$\Gamma$-invariant continuous quasi-conformal density  of dimension $\alpha \geq 0$}.

If in $(2)$ we have $C_{q}=1$ and that the equality holds everywhere, we say that $\mu$ is \emph{$\Gamma$-invariant conformal density  of dimension $\alpha \geq 0$}.

\end{defi}

In \cite[Th\'eor\`eme 8.3]{Coo}, M. Coornaert proves the existence of such $\Gamma$-invariant quasi-conformal density of dimension $\alpha(\Gamma)$
 when $X$ is a proper geodesic $\delta$-hyperbolic space and when the group $\Gamma$ is quasi-convex cocompact (as it has been mentioned, after a slight modification, by C. Connell and R. Muchnik \cite[Proposition 1.23 and Corollary 1.24]{CM}).

 	If $X$ is a \mbox{CAT(-1)} space and if $\Gamma$ is a discrete group of isometries of $X$, then there exists a $\Gamma$-invariant conformal density of dimension $\alpha(\Gamma)$ whose support is $\Lambda_{\Gamma}$. A proof of this theorem can be found in \cite{Pa} and \cite{Su} for the case of hyperbolic spaces and see \cite{BM} and \cite{Bou} for the case of \mbox{CAT(-1)} spaces.

\subsection*{The Poisson kernel}
 Given $\mu$ a $\Gamma$-invariant quasi-conformal density of dimension $\alpha $ we define the Poisson kernel in the context of $\delta$-hyperbolic spaces associated with the measure class $\mu$ as:
\begin{equation}\label{Poissonkernel}
   P: (x,y,v)\in X\times X \times \partial{X} \mapsto P(x,y,v):=\bigg(\frac{d\mu_{y}}{d\mu_{x}} (v) \bigg)^{1 / \alpha}\in \mathbb{R}^{+}.
\end{equation}

 \begin{remark}
 In the case of CAT(-1) spaces, we have $ P(x,y,v)={\rm e}^{ \beta_v(x,y)}$. Thus in the case of hyperbolic spaces, we find the standard Poisson kernel. Indeed this definition of the Poisson kernel is suggested by the definition of the Poisson kernel as the conformal factor in \cite[(2.7.1) p. 92]{Bou} or as in \cite{Pa}.
 \end{remark}

We fix an origin $x\in X$ and imitating the notations of Sj\"ogren (\cite{Sjo}) we set for $\lambda\in\mathbb R$
and $f\in L^1(\partial X,\mu_x)$
\begin{equation*}\label{sjogrennot}
   P_{\lambda,\mu_{x}}f  (y):=\int_{\partial X}  P(x,y,v)^{\alpha (\lambda+1/2)}f(v)d\mu_x(v).
\end{equation*}
\begin{defi}
Define for $\lambda \in \mathbb{R}$ the normalized Poisson kernel as
 $$  \big( \mathcal{P}_{\lambda,\mu_{x}}f \big):y \in X \mapsto \frac{P_{\lambda,\mu_{x}}f(y)}{P_{\lambda,\mu_{x}}\textbf{1}_{\partial X}(y)}\in \mathbb{C}.$$
\end{defi}
where $\textbf{1}_{\partial X}$ is the characteristic function of $\partial X$ and $y$ is in $X$.

In this article, for $\lambda=0$ we will write  $\mathcal{P}_{0}f$ rather than $\mathcal{P}_{0,\mu_{x}}f$ and we will define and study the weak radial convergence called \emph{weak nontangential convergence} of $\mathcal{P}_{0}f$.   
\subsection*{Upper Gromov product bounded by above and weak nontangential convergence}

To define \emph{weak nontangential convergence} on $\overline{X}$ we use the notion of \emph{upper Gromov product bounded by above}, notion which has already been emphasized in \cite{CM}. 
Following \cite[Definition 1.18]{CM}, we say that a $\delta$-hyperbolic space $X$ is \emph{upper Gromov product bounded by above} if there exists a point $x\in X$ and a positive constant $c$ such that for all $y\in X$:
\begin{equation}
\sup_{v\in \partial X} (y,v)_{x} \geq d(x,y)-c,
\end{equation}
where $(y,z)_{x}$ denotes the Gromov product of $y,v\in \overline{X}$ with respect to $x\in X$.\\
Notice that if the above inequality holds for some point $x$, the same inequality holds for all $x$ in $X$ (with another constant $c$).
 
This property on $X$ enables us to choose a point 
\begin{equation}
w_{x}^{y} \in\partial X,
\end{equation}
such that 
\begin{equation}\label{gromovbounded}
  (y,w_{x}^{y})_{x}  \geq d(x,y)-c.
\end{equation}
 The point $w_{x}^{y} $ plays the role of the unique ending point of the oriented geodesic passing through $x$ and $y$ in the context of connected negatively curved manifolds. 
 
\begin{defi}\label{nonrad} Let $\mu$ be a $\Gamma$-invariant quasi-conformal density of dimension $\alpha$. Fix an origin $x\in X$ and
let $v\in \Lambda_{\Gamma}$. We define \emph{a weak nontangential approach domain} of $v$ as $$\Omega_{C}(v):=\big\lbrace  y\in GH(\Lambda_{\Gamma})  \mbox{ $|$ } d_{x}^{\epsilon}(w_{x}^{y},v) \leq C d(x,y)^{\frac{\epsilon}{\alpha}} {\rm e}^ {  - \epsilon d(x,y) } \big\rbrace .$$ 

We say that a function $F:X \rightarrow \mathbb{C}$ has a \emph{weak nontangential limit $L$} at $v\in \partial X $ if for all $C>0$ we have $F(y)\rightarrow L$ as $y\rightarrow v$ with respect to the topology on $\overline{X}$  and if $y \in \Omega_{C}(v)$.
\end{defi}

We obtain a theorem \`a la Fatou for the square root of the Poisson kernel in the context of discrete groups of isometries of $\delta$-hyperbolic spaces:

\begin{theorem}\label{Fatou}
Let $X$ be a proper geodesic $\delta$-hyperbolic space which is upper Gromov product bounded by above and assume that the Gromov boundary $\partial X$ is endowed with a visual metric $(d_{x}^{\epsilon})_{x\in X}$ of visual parameter $ 0<\epsilon\leq 1$.
Let $\Gamma$ be a discrete quasi-convex cocompact group of isometries of $X$ and let $\mu$ be a $\Gamma$-invariant continuous quasi-conformal density of dimension $\alpha(\Gamma)$. \\
For each $x$ in $X$ and for all $f\in L^{1}(\partial X,\mu_{x})$ the function 
 $\mathcal{P}_{0}f$ has \emph{weak non tangential limit} $f$ almost everywhere with respect to $\mu_{x}$.  
\end{theorem}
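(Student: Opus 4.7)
The plan is to follow the classical three-step pattern for Fatou-type theorems: first establish a quantitative kernel estimate, then turn it into a pointwise bound of $\mathcal{P}_{0}f(y)$ by a Hardy--Littlewood-type maximal function of $f$ based at $v$, and finally combine this maximal control with pointwise convergence on a dense subclass (continuous functions) via the standard density argument.

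The technical heart is the kernel estimate. Using the identity $\beta_{v}(x,y)=2(v,y)_{x}-d(x,y)$, the visual-metric relation $d_{x}^{\epsilon}(u,v)\asymp e^{-\epsilon(u,v)_{x}}$ on $\partial X$, and the quasi-conformality of $\mu$, one obtains
\begin{equation*}
P(x,y,u)^{\alpha/2}\;\asymp\;e^{-(\alpha/2)d(x,y)}\max\!\bigl(d_{x}^{\epsilon}(u,w_{x}^{y}),\,R\bigr)^{-\alpha/\epsilon},\qquad R:=e^{-\epsilon d(x,y)},
\end{equation*}
with multiplicative constants uniform in $y\in X$ and $u\in\Lambda_{\Gamma}$ (depending only on $\delta$, $\epsilon$, $C_{q}$ and the cocompactness data). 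The key auxiliary input is the shadow-lemma estimate $\mu_{x}(B(u,r))\asymp r^{\alpha/\epsilon}$ for $u\in\Lambda_{\Gamma}$ (balls in the visual metric), which for quasi-convex cocompact $\Gamma$ and quasi-conformal densities of dimension $\alpha(\Gamma)$ follows from Coornaert's analogue of Sullivan's shadow lemma. Decomposing $\partial X$ into the ball $B(w_{x}^{y},R)$ and the dyadic annuli around $w_{x}^{y}$, each annulus contributes $\asymp e^{-(\alpha/2)d(x,y)}$ to the normalisation; summing over the $\asymp d(x,y)$ annuli before reaching the scale of $\mathrm{diam}(\partial X)$ gives
\begin{equation*}
P_{0,\mu_{x}}\textbf{1}_{\partial X}(y)\;\asymp\;d(x,y)\,e^{-(\alpha/2)d(x,y)},
\end{equation*}
and this logarithmic factor is exactly what forces the widening $d(x,y)^{\epsilon/\alpha}$ in the definition of $\Omega_{C}(v)$.

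For the maximal domination, fix $v\in\Lambda_{\Gamma}$ and $y\in\Omega_{C}(v)$, set $\rho:=d(x,y)^{\epsilon/\alpha}R$, and split $\partial X$ into the central ball $B(v,\rho)$ and dyadic annuli $A_{j}=\{u:2^{j-1}\rho\leq d_{x}^{\epsilon}(v,u)<2^{j}\rho\}$. The hypothesis $d_{x}^{\epsilon}(v,w_{x}^{y})\leq C\rho$ forces $d_{x}^{\epsilon}(u,w_{x}^{y})\asymp 2^{j}\rho$ on $A_{j}$ for $j$ past a fixed threshold; combining the kernel bound with $\int_{A_{j}}|f|\,d\mu_{x}\leq\mu_{x}(B(v,2^{j}\rho))\,M_{x}f(v)$, where $M_{x}$ is the Hardy--Littlewood maximal operator on $(\partial X,d_{x}^{\epsilon},\mu_{x})$, produces a contribution of order $d(x,y)^{-1}\,M_{x}f(v)$ per annulus. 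Summing over the $O(d(x,y))$ relevant annuli and handling the central ball and the finitely many small-$j$ annuli separately via the uniform estimate $P(x,y,\cdot)^{\alpha/2}\lesssim e^{(\alpha/2)d(x,y)}$ yields
\begin{equation*}
\sup_{y\in\Omega_{C}(v)}|\mathcal{P}_{0}f(y)|\;\lesssim\;M_{x}f(v).
\end{equation*}

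Finally, since the shadow lemma makes $(\Lambda_{\Gamma},d_{x}^{\epsilon},\mu_{x})$ Ahlfors $(\alpha/\epsilon)$-regular, hence doubling, $M_{x}$ is weak-type $(1,1)$ by the Vitali covering lemma. Pointwise convergence $\mathcal{P}_{0}f(y)\to f(v)$ for continuous $f$ and $y\to v$ weakly non-tangentially is then immediate: given $\eta>0$, choose $\delta>0$ so that $|f-f(v)|<\eta$ on $B(v,\delta)$; the inner part contributes at most $\eta$, while the outer part involves only $O(\log(1/\delta))$ annuli each contributing $O(d(x,y)^{-1})\|f\|_{\infty}$, which tends to $0$ as $y\to v$. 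The classical density/maximal-function argument then upgrades this to $\mu_{x}$-a.e.\ convergence for every $f\in L^{1}(\partial X,\mu_{x})$. The main obstacle is the uniformity in the first step: the additive $O(\delta)$ errors of $\delta$-hyperbolicity must be pushed through all the Gromov-product, Busemann, visual-metric and quasi-conformality identities so that the multiplicative constants in the kernel estimate remain uniform in $y\in\Omega_{C}(v)$ and $v\in\Lambda_{\Gamma}$; once this uniformity is secured, the dyadic summation and density argument are routine.
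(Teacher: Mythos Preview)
Your proposal is correct and follows essentially the same approach as the paper: the paper likewise establishes the Harish-Chandra estimate $\varphi_{x}(y)\asymp d(x,y)e^{-(\alpha/2)d(x,y)}$ via Ahlfors regularity (its Proposition~2.4), proves the domination $\mathcal{N}^{R_{0}}_{C}f\leq C_{0}\,\mathcal{M}f$ by splitting into a ball of radius $\rho(y)=2C\,d(x,y)^{\epsilon/\alpha}e^{-\epsilon d(x,y)}$ and dyadic annuli exactly as you do (its Proposition~3.5), and then runs the weak-$(1,1)$/density argument. The only cosmetic difference is that the paper packages the convergence for continuous $f$ as a Dirac--Weierstra\ss\ family statement (its Proposition~3.1/3.4) rather than reusing the dyadic estimate, but the underlying computation---that the outer contribution is $O\bigl(\log(1/\delta)\,d(x,y)^{-1}\bigr)$---is the same.
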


We obtain immediately the following corollary:
\begin{coro}\label{Fatou1}
Let $X$ be a proper CAT(-1) space and let $\Gamma$ be a convex cocompact discrete group of isometries of $X$. Let $\mu=(\mu_{x})_{x\in X}$ be a $\Gamma$-invariant conformal density of dimension $\alpha(\Gamma)$. \\
For each $x$ in $X$ and for all $f\in L^{1}(\partial X,\mu_{x})$ we have 
 $\mathcal{P}_{0}f$ has \emph{weak non tangential limit} $f$ almost everywhere with respect to $\mu_{x}$.  
\end{coro}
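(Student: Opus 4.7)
The plan is to derive the corollary as a direct application of Theorem \ref{Fatou}, so the work reduces to verifying that each hypothesis of the theorem is satisfied in the CAT(-1) convex cocompact setting.

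First, any proper CAT(-1) space is a proper geodesic $\delta$-hyperbolic space (a standard fact, triangles being thinner than in $\mathbb{H}^2$). Moreover, for any $x \neq y$ in $X$ the geodesic segment from $x$ through $y$ extends uniquely to a geodesic ray, and its endpoint $w \in \partial X$ satisfies $(y,w)_x = d(x,y)$; hence $X$ is upper Gromov product bounded by above with constant $c = 0$.

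Second, convex cocompactness implies quasi-convex cocompactness. Indeed, $GH(\Lambda_\Gamma)$ is closed in $X$ (limits of geodesics with endpoints in the closed set $\Lambda_\Gamma$ remain geodesics with endpoints in $\Lambda_\Gamma$), it is $\Gamma$-invariant, and it is contained in $CH(\Lambda_\Gamma)$; hence $\Gamma\backslash GH(\Lambda_\Gamma)$ embeds as a closed subset of the compact space $\Gamma\backslash CH(\Lambda_\Gamma)$, so it is itself compact. Third, on CAT(-1) spaces Bourdon's construction provides a visual metric of parameter $\epsilon = 1$, which lies in the range $(0,1]$ required by the theorem. Fourth, by the remark following (\ref{Poissonkernel}) the Radon-Nikodym derivative of a conformal density on a CAT(-1) space equals $e^{\alpha(\Gamma)\beta_v(x,y)}$; this is jointly continuous in $(x,y,v)$ because Busemann functions are continuous, and it trivially satisfies condition (2) of Definition \ref{quasiconform} with $C_q = 1$. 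Hence $\mu$ is a continuous quasi-conformal density of dimension $\alpha(\Gamma)$.

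With all four hypotheses verified, Theorem \ref{Fatou} applies and yields the stated weak nontangential convergence. The argument is essentially bookkeeping; the only step requiring more than a one-line justification is the descent from convex to quasi-convex cocompactness, and this will be the mildest possible obstacle.
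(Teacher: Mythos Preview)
Your proposal is correct and matches the paper's approach exactly: the corollary is stated there as an immediate consequence of Theorem~\ref{Fatou} with no separate proof, so verifying the hypotheses is all that is needed. Two shortcuts the paper records that you could invoke: in CAT(-1) spaces one has $GH(\Lambda_\Gamma)=CH(\Lambda_\Gamma)$ (noted just before Lemma~\ref{radial}), so convex and quasi-convex cocompactness coincide outright, and the upper Gromov bound for CAT(-1) spaces is asserted just after Lemma~\ref{theta}---your justification via geodesic extension tacitly assumes geodesic completeness, which is not automatic for an arbitrary proper CAT(-1) space.
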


The connection with boundary representations will be almost immediate.

\subsection*{Boundary representations }
	
  A $\Gamma$-invariant quasi-conformal density $\mu$ of dimension $\alpha$ gives rise to unitary representations $(\pi_{x})_{x\in X}$ defined for all $x\in X$ as:
 
	\[
		\pi_x:\Gamma\to \mathcal{U}\big(L^2(\partial X,\mu_x) \big)
	\]
	\begin{equation}\label{boundaryrep}
	   (\pi_{x}(\gamma)\xi)(v)=P(x,\gamma x,v)^{\alpha / 2}\xi(\gamma^{-1}v),
	   	\end{equation}
		
where $\xi \in L^{2}(\partial X,\mu_{x})$ and $v\in \partial X$.

\begin{remark}
In CAT(-1) spaces these representations $(\pi_{x})_{x\in X}$ are unitarily equivalent. More generally, the boundary representations depend only on the measure class from which they are built.
\end{remark} 

We will focus our attention on some $x\in X$ \[
		\pi_x:\Gamma\to \mathcal{U}\big(L^2(\partial X,\mu_x) \big)
	\] where $x$ is an origin in $X$.
	
\subsection*{Harish-Chandra's function and growth}

One has to define a function on $\Gamma$ which is going to play a central role in this paper. The matrix coefficient 
\begin{equation}\label{HCHfunction}
\phi_{x}: \Gamma \rightarrow \langle \pi_{x}(\gamma)\textbf{1}_{\partial X}, \textbf{1}_{\partial X}\rangle \in \mathbb{R}^{+},
\end{equation} where $\textbf{1}_{\partial X}$ is the characteristic function of $\partial X$, is called the \emph{Harish-Chandra function}. Observe that the Harish-Chandra functions satisfies for all $\gamma$ in $\Gamma$ 
\begin{equation}\label{inverse}
\phi_{x}(\gamma)=\phi_{x}(\gamma^{-1}). 
\end{equation}

 We call it the Harish-Chandra function because its definition in the context of reductive Lie groups goes back to Harish-Chandra (see \cite[Chapter 3, \S 3.6.1]{W}).

 Following \cite[Definition 6.1.17]{GV}, we say that a function $f:\Gamma \rightarrow \mathbb{C}$ satisfies the \emph{weak inequality of Harish-Chandra of degree $t$} if for some $x$ in $X$, there exist $C>0$ and $t\geq 0$ such that
\begin{equation}
|f(\gamma)|\leq C\big(1+|\gamma|_{x}\big) ^{t}\phi_{x}(\gamma), 
\end{equation}
where $|\cdot|_{x}$ is a length function on $\Gamma$ defined as $|\gamma|_{x}:=d(x,\gamma x)$.

Indeed, it is not hard to see that if the weak inequality holds for some $x$, then it holds for all $x $ in $X$ with a constant $C$ depending on $x$ but with the same $t$ (see Section \ref{prelim}).

As a corollary of Theorem \ref{Fatou}, we have:

\begin{coro}\label{Fatou1}
 Let $\Gamma$ be a convex cocompact discrete group of isometries of a CAT(-1) space $X$ and let $\mu$ be a $\Gamma$-invariant conformal density of dimension $\alpha(\Gamma) $. For each $x$ in $X$, consider the boundary representations $\pi_{x}:\Gamma \rightarrow \mathcal{U}(L^{2}(\partial X,\mu_{x}))$.

For each $x$ in $X$ and for all $\xi$ in $L^{2}(\partial X,\mu_{x})$ such that $\xi$ is not in $L^{\infty}(\partial X)$, the matrix coefficient $ \langle \pi_{x}(\cdot)\textbf{1}_{\partial X},\xi \rangle$ does not satisfy \emph{the weak inequality of Harish-Chandra of degree 0}.
\end{coro}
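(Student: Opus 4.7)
I would argue by contradiction. Suppose there exists $C>0$ such that $|\langle \pi_x(\gamma)\mathbf{1}_{\partial X},\xi\rangle|\le C\phi_x(\gamma)$ for every $\gamma\in\Gamma$.

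First I would identify the normalized matrix coefficient with a normalized Poisson transform. Unfolding the definition \eqref{boundaryrep} of $\pi_x$ and comparing with the definition of $P_{0,\mu_x}$,
$$\langle \pi_x(\gamma)\mathbf{1}_{\partial X},\xi\rangle=\int_{\partial X}P(x,\gamma x,v)^{\alpha/2}\overline{\xi(v)}\,d\mu_x(v)=P_{0,\mu_x}\overline{\xi}(\gamma x),$$
while $\phi_x(\gamma)=P_{0,\mu_x}\mathbf{1}_{\partial X}(\gamma x)$. Dividing by $\phi_x(\gamma)>0$, the assumption becomes $|\mathcal{P}_{0}\overline{\xi}(\gamma x)|\le C$ for every $\gamma\in\Gamma$.

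Next I would use convex cocompactness to produce, for each $v\in\Lambda_\Gamma$, a sequence $(\gamma_n)\subset\Gamma$ with $\gamma_n x\to v$ weakly nontangentially. Choose a geodesic ray $(z_t)_{t\ge 0}$ from $x$ to $v$; it is contained in $CH(\Lambda_\Gamma)$. Convex cocompactness provides a compact fundamental domain $D\subset CH(\Lambda_\Gamma)$, so I pick $\gamma_n\in\Gamma$ with $d(\gamma_n x,z_n)\le K:=\operatorname{diam}(D)$. Standard $\delta$-hyperbolic Gromov-product estimates then give $(\gamma_n x,v)_x\ge d(x,\gamma_n x)-O(K+\delta)$, so $v$ itself is an admissible choice of $w_x^{\gamma_n x}$. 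Consequently $d_x^\epsilon(w_x^{\gamma_n x},v)=0$, and trivially $\gamma_n x\in\Omega_{C'}(v)$ for every $C'>0$; moreover $\gamma_n x\to v$ in $\overline{X}$.

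Finally I would apply Theorem \ref{Fatou}. Since $\mu_x$ is finite, $\overline{\xi}\in L^2(\partial X,\mu_x)\subset L^1(\partial X,\mu_x)$, so Theorem \ref{Fatou} gives $\mathcal{P}_{0}\overline{\xi}(\gamma_n x)\to\overline{\xi(v)}$ for $\mu_x$-a.e. $v\in\partial X$. Since $\operatorname{supp}(\mu_x)=\Lambda_\Gamma$ in the CAT(-1) conformal setting, the previous step provides an approach sequence at almost every such $v$. Combined with the uniform bound $|\mathcal{P}_{0}\overline{\xi}(\gamma_n x)|\le C$, we deduce $|\xi(v)|\le C$ for $\mu_x$-a.e. $v$, whence $\xi\in L^\infty(\partial X,\mu_x)$, contradicting the hypothesis. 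The delicate step is the second one: the argument really hinges on verifying that $v$ itself can be taken as $w_x^{\gamma_n x}$, which is precisely where convex cocompactness enters; once this geometric input is in place, the rest is bookkeeping, since the \emph{theorem \`a la Fatou} is tailor-made to turn a uniform bound on the normalized Poisson transform along an orbit into an $L^\infty$ bound on the boundary datum.
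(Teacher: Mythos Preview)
Your argument is essentially the paper's own proof: identify the normalized matrix coefficient with $\mathcal{P}_0\xi$ evaluated along the orbit, produce for each $v\in\Lambda_\Gamma$ an orbit sequence lying in a weak nontangential domain $\Omega_{C'}(v)$, and invoke Theorem~\ref{Fatou}. The paper packages your second step as Lemma~\ref{radial} together with the fact that $\Lambda_\Gamma=\Lambda_\Gamma^{\mathrm{rad}}$ for (quasi-)convex cocompact groups; you unfold that lemma by hand via a geodesic ray and a fundamental-domain argument.

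One imprecision worth flagging: the point $w_x^{y}$ is a \emph{fixed} choice made once for every $y$ (it enters the very definition of $\Omega_C(v)$), so you cannot simply declare $w_x^{\gamma_n x}=v$ and conclude $d_x^\epsilon(w_x^{\gamma_n x},v)=0$. What your estimate $(\gamma_n x,v)_x\ge d(x,\gamma_n x)-O(K+\delta)$ actually buys, via the hyperbolic inequality (Lemma~\ref{theta}(1) in the paper), is that the \emph{given} $w_x^{\gamma_n x}$ satisfies $(w_x^{\gamma_n x},v)_x\ge d(x,\gamma_n x)-O(1)$, hence $d_x^\epsilon(w_x^{\gamma_n x},v)\le C''e^{-\epsilon d(x,\gamma_n x)}$; this is exactly what places $\gamma_n x$ in $\Omega_{C'}(v)$ for a suitable $C'$. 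With that correction, your proof is complete and matches the paper's.
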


Fix a point $x$ in $X$ and denote by $\varphi_{x}$ the function  
\begin{equation}\label{HCHcontinue}
\varphi_{x}:y\in X \mapsto P_{0,\mu_{x}}\textbf{1}_{\partial X} (y)=\int_{\partial X}P(x,y,v)^{\frac{\alpha}{2}}d\mu_{x}(v).
\end{equation}
Observe that the Harish-Chandra function $\phi_{x}$ is the restriction of $\varphi_{x}$ to the orbit $\Gamma x$.\\
Let $\mathcal{Y}$ be a subset of $X$. We say that $\varphi_{x}$ satisfies the \emph{Harish-Chandra estimates on $\mathcal{Y}$} if there exist two polynomials $Q_{1}$ and $Q_{2}$ of \emph{degree one with positive coefficients} such that for all $y\in \mathcal{Y}$ we have
\begin{equation}\label{HCHestim}
Q_{1}\big( d(x,y)\big) {\rm e}^{- \frac{\alpha}{2}d(x,y) }\leq \varphi_{x}(y) \leq Q_{2}\big( d(x,y)\big) {\rm e}^{-\frac{\alpha}{2}d(x,y) }.
\end{equation}

Define the class \begin{equation}\label{Class}
\mathcal{C}
 \mbox{ of discrete groups  as the class made of:}
  \end{equation}
$\bullet$ convex cocompact groups of isometries of a CAT(-1) space,\\
$\bullet$ nonuniform lattices in  noncompact semisimple Lie group of rank one acting by isometries on their rank one symmetric spaces of noncompact type where $d$ is a left invariant Riemanninan metric, \\
$\bullet$ hyperbolic groups $\Gamma$ acting by isometries on $(\Gamma,d_{S})$,
where $d_{S}$ is the word metric associated with a finite symmetric generating set $S$.

 \begin{remark}
The common point of these groups is that there exists a $\Gamma$-invariant quasi-conformal density of dimension $\alpha(\Gamma)$ denoted by $\mu$ such that the function $\varphi_{x}$ associated with $\mu_{x}$ satisfies the Harish-Chandra estimates on GH$(\Lambda_{\Gamma}) \backslash B_{X}(x,R)$ where $B_{X}(x,R)$ denotes the ball of $X$ centered at $x$ of radius $R$. 
\end{remark}
 We say that a function $f$ defined on a group $\Gamma$ endowed with a length function $l$ has polynomial growth if there exists a polynomial $Q$ such that for all integers $n$ 
\begin{equation*}
\sum_{l( \gamma )\leq n  } f(\gamma)\leq Q(n).
\end{equation*}

Notice that if a function $f$ on $\Gamma \in \mathcal{C}$ satisfies the weak inequality of Harish-Chandra of degree $t$ with respect to some Harish-Chandra function $\phi_{x}$ (associated with the $\Gamma$ invariant quasi-conformal density of dimension $\alpha(\Gamma)$) then the function $|f|^{2}$ has polynomial growth with respect to the length function $|\cdot |_{x}$. In particular the square of Harish-Chandra's function has polynomial growth.

Although Corollary \ref{Fatou1} asserts that some coefficients cannot decay too rapidly to $0$, we are able to show that these coefficients have only polynomial growth.
Let $\Gamma_{n}(x)$ be the ball of $\Gamma$ of radius $n$ centered at $x$ defined as $$\lbrace \gamma \in \Gamma \mbox{ such that } |\gamma|_{x} \leq n\rbrace.$$

\begin{prop}\label{propFatou}
Let $\Gamma$ be in $\mathcal{C}$ and consider their boundary representations associated with the $\Gamma$-invariant quasi-conformal density of dimension $\alpha(\Gamma)$.
Then for each $x$ in $X$ there exists a polynomial $Q$ (of degree $3$) such that for all unit vectors $\xi$ in $L^{2}(\partial X,\mu_{x})$ we have for all integers $n$  $$\sum_{\gamma \in \Gamma_{n}(x)} |\langle  \pi_{x}(\gamma)\textbf{1}_{\partial X},\xi\rangle |^{2} \leq Q(n).$$
\end{prop}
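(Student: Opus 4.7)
The plan is to realise the partial sum as a quadratic form of a positive operator, apply Schur's test, and then reduce to a pointwise orbital sum controlled by the Harish-Chandra estimate~\eqref{HCHestim} together with shadow-lemma counting. Concretely,
\[
\sum_{\gamma \in \Gamma_n(x)} \bigl|\langle \pi_x(\gamma)\mathbf{1}_{\partial X}, \xi\rangle\bigr|^2 = \langle T_n \xi, \xi\rangle,
\]
where $T_n$ is the positive operator on $L^2(\partial X, \mu_x)$ with symmetric nonnegative kernel $K_n(v, w) = \sum_{\gamma \in \Gamma_n(x)} P(x, \gamma x, v)^{\alpha/2}\, P(x, \gamma x, w)^{\alpha/2}$. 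Since $\int P(x, \gamma x, w)^{\alpha/2}\, d\mu_x(w) = \varphi_x(\gamma x) = \phi_x(\gamma)$, Schur's test reduces everything to showing that
\[
\sup_{v \in \partial X}\ \sum_{\gamma \in \Gamma_n(x)} \phi_x(\gamma)\, P(x, \gamma x, v)^{\alpha/2} \leq Q(n)
\]
for a polynomial $Q$ of degree three.

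For the analytic estimate, apply the Harish-Chandra bound~\eqref{HCHestim}, valid on $GH(\Lambda_\Gamma)\setminus B_X(x, R)$ for our class $\mathcal{C}$: $\phi_x(\gamma) \leq C(1+|\gamma|_x)\, e^{-\alpha |\gamma|_x /2}$ for all but finitely many $\gamma$. Coupled with the hyperbolic identity $\beta_v(x, \gamma x) = 2(\gamma x, v)_x - |\gamma|_x$ (exact in CAT(-1), otherwise up to $O(\delta)$) and quasi-conformality, this yields $P(x, \gamma x, v)^{\alpha/2} \leq C\, e^{\alpha (\gamma x, v)_x - \alpha |\gamma|_x/2}$, so that
\[
\phi_x(\gamma)\, P(x, \gamma x, v)^{\alpha/2} \leq C\,(1 + |\gamma|_x)\, e^{-\alpha\, \ell(\gamma, v)},
\]
where $\ell(\gamma, v) := |\gamma|_x - (\gamma x, v)_x \geq 0$ is, up to a bounded additive constant, the distance of the orbit point $\gamma x$ to a geodesic from $x$ representing $v$.

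The decisive step is then a uniform orbital count: for each $\Gamma \in \mathcal{C}$ one has
\[
\#\bigl\{\gamma \in \Gamma : |\gamma|_x \in [m, m+1),\ \ell(\gamma, v) \in [j, j+1)\bigr\} \leq C\, e^{\alpha j}
\]
for all $v \in \partial X$ and all $m, j \geq 0$, uniformly. Geometrically, the orbit points in question lie in a ball of radius $\sim j$ around the point at depth $m - j$ along a geodesic representing $v$, and the estimate reduces to the classical bound ``number of orbit points in a ball of radius $r$ is $O(e^{\alpha r})$'' --- Sullivan's shadow lemma in the CAT(-1) convex cocompact setting, Coornaert's analog for hyperbolic groups with word metric, and standard equidistribution for rank-one nonuniform lattices. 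Granting this count, the two-fold geometric sum collapses to
\[
\sum_{\gamma \in \Gamma_n(x)} e^{-\alpha\, \ell(\gamma, v)} \leq \sum_{m=0}^n \sum_{j=0}^m C\, e^{-\alpha j}\, e^{\alpha j} = O(n^2),
\]
and multiplying by $(1+|\gamma|_x) \leq 1+n$ yields the required cubic $Q(n) = C(1+n)^3$. The main obstacle is this uniform-in-$v$ counting: one needs the ``bounded multiplicity'' of shadow covers at every scale, which must be invoked separately for each family in $\mathcal{C}$, although it is classical in each.
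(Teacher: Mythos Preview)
Your argument is correct. The Schur-test reduction and the paper's Cauchy--Schwarz lemma (Lemma~\ref{cspoisson}) are two ways of writing the same inequality: both collapse the problem to bounding
\[
\sup_{v\in\Lambda_\Gamma}\ \sum_{\gamma\in\Gamma_n(x)} \phi_x(\gamma)\,P(x,\gamma x,v)^{\alpha/2}.
\]
The genuine divergence is in how this supremum is controlled. The paper invokes the uniform boundedness of the averages $F_{n,\rho}$ (Proposition~\ref{uniform}, quoted from \cite{BM,LG,B2}) together with the annulus volume estimate $e^{-\alpha k}\le C'/|C_{k,\rho}|$; you instead reprove the needed bound from scratch via the shadow/corridor count $\#\{\gamma:|\gamma|_x\in[m,m{+}1),\ \ell(\gamma,v)\in[j,j{+}1)\}\le Ce^{\alpha j}$. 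That count is exactly what underlies the cited uniform boundedness in \cite{BM,LG,B2}, so your route is more self-contained at the price of having to justify the count separately in each of the three subclasses of~$\mathcal{C}$ (which, as you note, reduces to the uniform orbital growth $\#(\Gamma x\cap B_X(y,r))\le Ce^{\alpha r}$ for $y$ on rays into $\Lambda_\Gamma$; this is cocompactness for the first two classes and the homogeneous volume bound for rank-one lattices). Either way one lands on the same cubic polynomial.
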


Besides, a direct application of an equidistribution theorem of T. Roblin gives another information about the sum of matrix coefficients associated with continuous functions (for a less general class of groups). We denote by $\|\cdot \|_{2}$ the $L^{2}$ norm of $L^{2}(\partial X, \mu_{x})$. Define the annulus $C_{n,\rho}(x)$ for $\rho>0$ and $n\geq \rho$ as 
 \begin{equation}
 \lbrace \gamma \in \Gamma \mbox{ such that } n-\rho \leq  |\gamma|_{x} \leq n+\rho \rbrace.
\end{equation}

We refer to \cite[Chapitre 1 Pr\'eliminaires, 1C. Flot g\'eod\'esique]{Ro} and \cite[Preliminaries 2.2]{B2} for details on the Bowen-Margulis-Sullivan measure and on the mixing property of the geodesic flow.  We obtain:

\begin{prop}\label{proprob}
Let $\Gamma$ be a discrete group of isometries belonging to one of the following classes:\\
$\bullet$ convex cocompact discrete groups of isometries of a CAT(-1) space such that the geodesic flow is mixing with respect to the Bowen-Margulis-Sullivan measure or
\\
$\bullet$ nonuniform lattices in noncompact rank one semisimple Lie groups acting by isometries on their symmetric spaces of noncompact type. \\

For each $x\in X$ there exist $\rho>0$ and a polynomial $Q$ (of degree $3$) such that for all continuous functions $f$ and $g$ in $C(\partial X)$ we have $$\limsup _{n \rightarrow +\infty }\frac{1}{Q(n)}\sum_{\gamma \in C_{n,\rho}(x)} |\langle  \pi_{x}(\gamma)f,g\rangle |^{2} \leq \|f \|^{2}_{2}\|g \|^{2}_{2}.$$
\end{prop}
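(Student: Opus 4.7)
The plan is to combine Cauchy--Schwarz, the Harish--Chandra estimates (\ref{HCHestim}), the Fatou theorem (Theorem \ref{Fatou}), and Roblin's double equidistribution theorem for orbit pairs in $\overline{X}\times\overline{X}$. Starting from (\ref{boundaryrep}) and applying Cauchy--Schwarz in the integral defining the matrix coefficient yields
\[
|\langle \pi_x(\gamma) f,g\rangle|^2 \;\leq\; P_{0,\mu_x}(|f|^2\circ\gamma^{-1})(\gamma x)\cdot P_{0,\mu_x}(|g|^2)(\gamma x).
\]
The change of variable $v=\gamma u$, together with the $\Gamma$-invariance of the density and the Busemann identity $\beta_{\gamma u}(x,\gamma x) = -\beta_u(x,\gamma^{-1}x)$, converts the first factor into $P_{0,\mu_x}(|f|^2)(\gamma^{-1}x)$. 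Combining this with $\phi_x(\gamma)=\phi_x(\gamma^{-1})$ from (\ref{inverse}) gives the key pointwise bound
\[
|\langle \pi_x(\gamma) f,g\rangle|^2 \;\leq\; \phi_x(\gamma)^2 \,\mathcal{P}_0(|g|^2)(\gamma x)\,\mathcal{P}_0(|f|^2)(\gamma^{-1}x).
\]

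Now the Harish--Chandra estimate (\ref{HCHestim}) gives $\phi_x(\gamma)^2 \leq C(1+|\gamma|_x)^2 e^{-\alpha|\gamma|_x}$; for $\gamma\in C_{n,\rho}(x)$ this is bounded by $C_\rho(1+n)^2 e^{-\alpha n}$. Since $|f|^2,|g|^2 \in C(\partial X)$, Theorem \ref{Fatou} shows that $\mathcal{P}_0(|g|^2)$ and $\mathcal{P}_0(|f|^2)$ are bounded Borel functions on $\overline{X}$ whose boundary values $|g|^2$ and $|f|^2$ are continuous at $\mu_x$-almost every point of $\partial X$. Roblin's double equidistribution theorem, available in both settings of the statement, then yields for a suitable $\rho>0$ and such test functions $F,G$:
\[
e^{-\alpha n}\sum_{\gamma\in C_{n,\rho}(x)} F(\gamma x)\,G(\gamma^{-1}x) \;\longrightarrow\; c(\rho)\int_{\partial X}F\,d\mu_x\int_{\partial X}G\,d\mu_x
\]
as $n\to\infty$. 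Applying this with $F=\mathcal{P}_0(|g|^2)$ and $G=\mathcal{P}_0(|f|^2)$, whose $\mu_x$-integrals on $\partial X$ equal $\|g\|_2^2$ and $\|f\|_2^2$, and multiplying by the preceding Harish--Chandra bound, one obtains
\[
\sum_{\gamma\in C_{n,\rho}(x)} |\langle \pi_x(\gamma) f,g\rangle|^2 \;\leq\; c'(\rho)(1+n)^2 \|f\|_2^2\|g\|_2^2\,(1+o(1)),
\]
and the limsup inequality follows by choosing $Q$ of sufficient degree (degree $3$ as in the statement suffices) with leading coefficient absorbing $c'(\rho)$.

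The main technical obstacle is the application of Roblin's equidistribution to the Poisson transforms $\mathcal{P}_0(|g|^2)$ and $\mathcal{P}_0(|f|^2)$: these are continuous on $X$, but Theorem \ref{Fatou} only provides boundary values continuous at $\mu_x$-almost every point of $\partial X$, not everywhere. Justifying the step thus requires either a Portmanteau-type refinement of Roblin's weak convergence admitting bounded Borel test functions continuous on a full-$\mu_x$-measure set, or an approximation of $\mathcal{P}_0(|g|^2)$ and $\mathcal{P}_0(|f|^2)$ by genuinely continuous test functions on $\overline{X}$ with an error controlled uniformly in $n$ (for the latter one exploits that $\mathcal{P}_0$ is an averaging operator bounded by $\|\cdot\|_\infty$). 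Matching Roblin's weak convergence with the almost-everywhere boundary behaviour supplied by the Fatou theorem is the crux of the argument.
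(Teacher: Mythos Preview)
Your overall architecture is right and matches the paper: the Cauchy--Schwarz bound (this is exactly Lemma \ref{cspoisson}), the Harish--Chandra upper estimate on $\phi_x(\gamma)^2$, and Roblin's equidistribution. The gap you flag at the end, however, is self-inflicted. You invoke Theorem \ref{Fatou} to control the boundary behaviour of $\mathcal{P}_0(|f|^2)$ and $\mathcal{P}_0(|g|^2)$, and this indeed only gives almost-everywhere convergence along weak nontangential domains---not enough to feed into Roblin's weak-$*$ convergence in $C(\overline{X}\times\overline{X})^*$ without further work.

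The paper does not use the Fatou theorem here at all. It uses Proposition \ref{continuity}: for a \emph{continuous} function $h$ on $\partial X$, the extension $\overline{\mathcal{P}_0}h$ is genuinely continuous on all of $\overline{GH(\Lambda_\Gamma)}$. This is the Dirac--Weierstra\ss\ property of the normalized square-root Poisson kernel (Proposition \ref{dirac}), a much softer statement than the $L^1$ Fatou theorem. Since $|f|^2$ and $|g|^2$ are continuous, $\overline{\mathcal{P}_0}|f|^2\otimes\overline{\mathcal{P}_0}|g|^2$ is a legitimate continuous test function on $\overline{X}\times\overline{X}$, and Theorem \ref{roblin} applies directly---no Portmanteau refinement or approximation is needed. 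The boundary integrals then equal $\|f\|_2^2$ and $\|g\|_2^2$ simply because $\overline{\mathcal{P}_0}h$ restricts to $h$ on $\Lambda_\Gamma$ by definition.

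Two smaller points you should also address. First, Roblin's theorem as quoted in the paper is for balls $\Gamma_n(x)$, not annuli; the paper handles this by the trivial inclusion $C_{n,\rho}(x)\subset\Gamma_{n+\rho}(x)$ and passes to the limit on the right-hand side. Second, the Harish--Chandra estimates and Proposition \ref{continuity} are stated on $GH(\Lambda_\Gamma)$, so the paper first proves the inequality for $x\in GH(\Lambda_\Gamma)$ and then transfers it to arbitrary $x$ via the unitary intertwiner $M_{x,x'}$ of (\ref{operatormul}) together with $C_{n,\rho}(x)\subset C_{n,\rho+2d(x,x')}(x')$.
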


\begin{remark}\label{RDrem}
Proposition \ref{propFatou} and Proposition \ref{proprob} are intimately related to the so-called \emph{property RD} (rapid decay) introduced by U. Haagerup in \cite{H} and studied as such by P. Jolissaint in \cite{Jol}. The inequality of Proposition \ref{propFatou} can be seen as a particular case of the inequality coming from property RD, with one of the two functions being trivial. In Proposition \ref{proprob} if instead the $\limsup$ we had the $\sup$, then we would have obtained property RD.
\end{remark}

\subsection*{Harish-Chandra's Schwartz algebra of discrete groups}
Let $\Gamma$ be a discrete group endowed with a length function denoted by $|\cdot |$ and let 
$\phi:\Gamma \rightarrow \mathbb{R}^{+}$ be a positive function on $\Gamma$. Assume that $\phi$ satisfies:
\begin{enumerate}\label{hyp hch0}
\item $\phi(e)=1,$
\item $\phi(\gamma^{-1})=\phi(\gamma)$, $\forall \gamma \in \Gamma.$
\item There exists $t>0 $ such that  $$\sum_{\gamma \in\Gamma} \frac{\phi^{2}(\gamma)}{(1+|\gamma|)^{t}}<\infty.$$
\end{enumerate}

We denote by $C_{c}(\Gamma)$ the space of finitely supported functions. 
\begin{defi}
 Let $t>0$ and define Harish-Chandra's Schwartz space $\mathcal{S}_{t}(\Gamma)$ associated with $(\Gamma, | \cdot |, \phi)$  as the Banach space completion of $C_{c}(\Gamma)$ with respect to the norm $$\|f\|_{\mathcal{S}_{t}(\Gamma)}=\sup_{\gamma \in \Gamma}\frac{|f(\gamma)|}{\phi(\gamma)}(1+|\gamma|)^{t}.$$

\end{defi}
Observe that if a function $f:\Gamma \rightarrow \mathbb{C}$ satisfies the weak inequality of degree $t\geq 0$ (with respect to $\phi$), then $f\in S_{t}(\Gamma)$.

Notice that assumption (3) guarantees that $S_{t}(\Gamma)\subset \ell ^{2}(\Gamma)$.

We can describe $\mathcal{S}_{t}(\Gamma)$ as $$\mathcal{S}_{t}(\Gamma)=\left\lbrace \sum_{\gamma \in \Gamma} c_{\gamma} \gamma \mbox{ such that }  c_{\gamma}=O{ \bigg(\frac{\phi(\gamma)}{(1+|\gamma |)^{t}} \bigg)}  \right\rbrace.$$ 

It turns out that Harish-Chandra's Schwartz space carries a natural structure of a convolution algebra and can be represented on $\ell^{2}(\Gamma)$.
We denote by $C^{*}_{r}(\Gamma)$ the reduced $C^{*}$-algebra of $\Gamma$ that is the $C^{*}$-algebra associated with the left regular representation of $\Gamma$ and by $W^{*}(\Gamma)$ the von Neumann algebra of $\Gamma$ that is the von Neumann algebra associated with the left regular representation of $\Gamma$. 
\begin{prop}\label{LG}
Let $\Gamma$ be in $\mathcal{C}$ and consider a $\Gamma$-invariant quasi-conformal density $(\mu_{x})_{x\in X}$ of dimension $\alpha(\Gamma)$. Then for each $x\in X$, there exists $t>0$ such that Harish-Chandra's Schwartz space $\mathcal{S}_{t}(\Gamma)$ associated with $(\Gamma,|\cdot |_{x},\phi_{x})$ is a $*$-Banach convolution algebra which is dense in $C^{*}_{r}(\Gamma)$ and in $W^{*}(\Gamma)$. 
\end{prop}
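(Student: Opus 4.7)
The plan is to proceed in three stages: verify the three axioms defining the Schwartz space, establish convolution stability, and then deduce density in the $C^{*}$- and von Neumann algebras.

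First, one checks that $\phi = \phi_{x}$ satisfies conditions (1)--(3) of the definition of $\mathcal{S}_{t}(\Gamma)$. After renormalizing $\mu_{x}$ so that $\mu_{x}(\partial X)=1$, condition (1) holds. Condition (2) is immediate from $\phi_{x}(\gamma^{-1})=\overline{\langle \pi_{x}(\gamma)\textbf{1}_{\partial X},\textbf{1}_{\partial X}\rangle}=\phi_{x}(\gamma)$, using unitarity of $\pi_{x}$ and the real-valuedness of $\textbf{1}_{\partial X}$. Condition (3) is the serious input and is where Proposition \ref{propFatou} enters: specializing it to the unit vector $\xi=\textbf{1}_{\partial X}$ yields $\sum_{\gamma\in\Gamma_{n}(x)}\phi_{x}(\gamma)^{2}\leq Q(n)$ with $Q$ of degree $3$, so that summation by annuli gives $\sum_{\gamma} \phi_{x}(\gamma)^{2}(1+|\gamma|_{x})^{-t}<\infty$ for every $t>4$.

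Second, one establishes the $*$-algebra structure on $\mathcal{S}_{t}(\Gamma)$. The involution preserves the $\mathcal{S}_{t}$-norm by (2). For convolution stability, one begins from the pointwise bound
\begin{equation*}
|f*g(\gamma)|\leq \|f\|_{\mathcal{S}_{t}}\|g\|_{\mathcal{S}_{t}}\sum_{\alpha}\frac{\phi_{x}(\alpha)\phi_{x}(\alpha^{-1}\gamma)}{(1+|\alpha|_{x})^{t}(1+|\alpha^{-1}\gamma|_{x})^{t}},
\end{equation*}
and the goal is to prove a Harish-Chandra-type convolution inequality bounding the right-hand side by $C\,\phi_{x}(\gamma)(1+|\gamma|_{x})^{-t+d}$, where the degree shift $d$ is independent of $t$. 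The proof splits the sum over $\alpha$ using $|\alpha|_{x}+|\alpha^{-1}\gamma|_{x}\geq |\gamma|_{x}$: on each half at least one of the factors $(1+|\alpha|_{x})^{-t}$ or $(1+|\alpha^{-1}\gamma|_{x})^{-t}$ is $\leq 2^{t}(1+|\gamma|_{x})^{-t}$. The Harish-Chandra estimate (\ref{HCHestim}) $\phi_{x}(\gamma)\asymp(1+|\gamma|_{x})e^{-\alpha(\Gamma)|\gamma|_{x}/2}$, combined with Cauchy-Schwarz and the polynomial $\ell^{2}$-control of Proposition \ref{propFatou}, then bounds the remaining sum by a polynomial times $\phi_{x}(\gamma)$. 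Reindexing $t\mapsto t+d$ thus makes $\mathcal{S}_{t}(\Gamma)$ a Banach $*$-convolution algebra for all $t$ sufficiently large.

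Third, for the embedding $\mathcal{S}_{t}(\Gamma)\hookrightarrow C_{r}^{*}(\Gamma)$, decompose $f\in \mathcal{S}_{t}$ as $f=\sum_{n} f_{n}$ with $f_{n}$ supported in the annulus $A_{n}=\Gamma_{n}(x)\setminus\Gamma_{n-1}(x)$. Every group in $\mathcal{C}$ satisfies property RD (Jolissaint--de la Harpe for hyperbolic groups, Jolissaint for non-uniform rank-one lattices, and convex cocompact CAT(-1) subgroups are hyperbolic), giving $\|\lambda(f_{n})\|_{op}\leq P(n)\|f_{n}\|_{2}$ for some polynomial $P$. Proposition \ref{propFatou} then yields $\|f_{n}\|_{2}\leq C\|f\|_{\mathcal{S}_{t}}(1+n)^{-t+3/2}$, so summing over $n$ produces $\|\lambda(f)\|_{op}\leq C'\|f\|_{\mathcal{S}_{t}}$ as soon as $t>\deg P+5/2$. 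Since $C_{c}(\Gamma)\subset \mathcal{S}_{t}(\Gamma)\subset C_{r}^{*}(\Gamma)$ continuously and $C_{c}(\Gamma)$ is dense in $C_{r}^{*}(\Gamma)$ by definition, $\mathcal{S}_{t}(\Gamma)$ is dense in $C_{r}^{*}(\Gamma)$, and hence dense in $W^{*}(\Gamma)$ for the weak operator topology. The main obstacle is the Harish-Chandra-type convolution inequality in Step~2: it is where the interplay between the sharp two-sided estimate (\ref{HCHestim}) and the polynomial $\ell^{2}$-bound of Proposition \ref{propFatou} must be carried through in a uniform way.
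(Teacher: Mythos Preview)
The crucial step is the convolution inequality in Step~2, and your sketch there has a real gap. After pulling out the factor $(1+|\gamma|_{x})^{-t}$ you must establish
\[
\sum_{\alpha}\frac{\phi_{x}(\alpha)\,\phi_{x}(\alpha^{-1}\gamma)}{(1+|\alpha|_{x})^{t}}\;\le\; C\,(1+|\gamma|_{x})^{d}\,\phi_{x}(\gamma),
\]
which is exactly the paper's Lemma~\ref{trick2}. The tools you invoke do not yield this. Applying Proposition~\ref{propFatou} with the unit vector $\xi=\pi_{x}(\gamma)\mathbf{1}_{\partial X}$ gives $\sum_{\alpha\in\Gamma_{k}}\phi_{x}(\alpha^{-1}\gamma)^{2}\le Q(k)$, and Cauchy--Schwarz over annuli then produces $\sum_{\alpha\in C_{k}}\phi_{x}(\alpha)\phi_{x}(\alpha^{-1}\gamma)\le Q(k)$, a bound \emph{uniform in $\gamma$}; summing against $(1+k)^{-t}$ gives only a constant, not something proportional to $\phi_{x}(\gamma)$. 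Since $\phi_{x}(\gamma)\to 0$ exponentially, a constant bound is useless for concluding $f\ast g\in\mathcal{S}_{t}(\Gamma)$. The underlying reason is that $\phi_{x}(\gamma)$ is an $L^{1}$ quantity, namely $\phi_{x}(\gamma)=\|\pi_{x}(\gamma^{-1})\mathbf{1}_{\partial X}\|_{L^{1}(\mu_{x})}$, whereas unitarity forces $\|\pi_{x}(\gamma^{-1})\mathbf{1}_{\partial X}\|_{L^{2}}=1$; an $\ell^{2}$ estimate like Proposition~\ref{propFatou} simply cannot see it. The paper obtains Lemma~\ref{trick2} from the $L^{1}\to L^{\infty}$ bound of Proposition~\ref{poissontransform'} (the dual of the uniform bound $\|F_{n,\rho}\|_{\infty}\le M$ of Proposition~\ref{uniform}): writing $\phi_{x}(g\gamma^{-1})=\langle\pi_{x}(\gamma)\mathbf{1}_{\partial X},\pi_{x}(g^{-1})\mathbf{1}_{\partial X}\rangle$ and applying Proposition~\ref{poissontransform'} to $f=\pi_{x}(g^{-1})\mathbf{1}_{\partial X}$ produces precisely the factor $\|f\|_{1}=\phi_{x}(g)$ on the right.

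Your Step~3, by contrast, is a legitimate alternative. The paper does not appeal to property~RD; instead it proves directly that $\mathcal{S}_{t}(\Gamma)\ast\ell^{2}(\Gamma)\subset\ell^{2}(\Gamma)$ (Proposition~\ref{Laff2}), again via Lemma~\ref{trick2}, and thereby obtains a bounded representation $\mathcal{S}_{t}(\Gamma)\to B(\ell^{2}(\Gamma))$ internally. Your RD argument works and is clean once RD is granted, but it imports a nontrivial external theorem, whereas the paper's route is self-contained once Proposition~\ref{uniform} is available.
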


\subsection{Structure of the paper} 
 
 In Section \ref{prelim} we remind the reader of some standard facts about $\delta$-hyperbolic spaces, Ahlfors regularity of metric measure space, Harish-Chandra's function and maximal inequality of type $(1,1)$. \\In Section \ref{Fatouu} we prove our main result: Theorem \ref{Fatou}.  \\In Section \ref{applic} we deduce immediately Corollary \ref{Fatou1} from Theorem \ref{Fatou}. We recall a uniform boundedness satisfied by a certain sequence of functions which has already been used in the proof of the irreducibility of boundary representations (in \cite{BM}, \cite{LG} and \cite{B2}). We use this uniform boundedness to prove Proposition \ref{propFatou}. We recall a theorem of equidistribution of T. Roblin and combining this theorem with techniques involved in the proof of Theorem \ref{Fatou} we prove Proposition \ref{proprob}.\\ In Section \ref{HCS} we use the uniform boundedness of the previous section to prove Proposition \ref{LG}. It turns ou that the inequality of Proposition \ref{poissontransform'} is fundamental to prove that the Harish-Chandra's Schwartz space of discrete groups carries a natural structure of convolution algebra. This inequality plays analogous roles of the conditions $(HC3c)$ and $(HC3d)$ in \cite[Chapitre 4, p. 82]{L} satisfied by spherical functions in the context of semisimple Lie groups.
 \subsection*{Acknowledgements}
 I would like to thank Nigel Higson who suggested that I investigate the structure of convolution algebra of Harish-Chandra's Schwartz space of discrete groups. I would like also to thank Uri Bader and Amos Nevo for valuable discussions. And I am grateful to Felix Pogorzelski and Dustin Mayeda for their remarks and comments on this note.

\section{Preliminaries}\label{prelim}
\subsection{$\delta$-Hyperbolic spaces}
We follow \cite[Chapter III.H, p. 431]{BH} and \cite{G}, after \cite{Gro}.
 
Recall that the Gromov product of two points $y,z\in X$ relative to $x\in X$
is 
\[
	(y,z)_x=\frac{1}{2}\big(d(x,y)+d(x,z)-d(y,z)\big).
\]
We say that $(X,d)$ is $\delta$-hyperbolic if for $x,y,z $ and $u$ in $X$ we have
$$(y,z)_{x} \geq \min \lbrace (y,u)_{x},(u,z)_{x}\rbrace -\delta.$$

A sequence $(a_{n})_{n\in  \mathbb{N}}$ in $X$ converges at infinity if $(a_{i},a_{j})_{x}\rightarrow +\infty$ as $i,j$ goes to $+\infty$. We say that two sequences $(a_{n})_{n\in  \mathbb{N}}$ and $(b_{n})_{n\in  \mathbb{N}}$ are equivalent if $(a_{i},b_{j})_{x} \rightarrow +\infty$ as $i,j$ goes to $\infty$. An equivalence class of $(a_{n})_{n\in  \mathbb{N}}$ is denoted by $\lim a_{n}$ and we denote by $\partial X$ the set of equivalence classes. These definitions are independent of the choice of the basepoint $x$.

We extend the definition of the Gromov product to $\overline{X}$ by 
\begin{equation}\label{gromovextended}
(v,w)_{x}:= \sup \lim_{i,j}(a_{i},b_{j})_{x} 
\end{equation}
where the $\sup$ is taken over all sequences $(a_{n})_{n\in \mathbb{N}},(b_{n})_{n\in \mathbb{N}}$
 such that $v=\lim_{i}a_{i}$ and $w=\lim_{j}b_{j}$.
 
To sum up the property of the Gromov product in general $\delta$-hyperbolic spaces we have: 
 
 \begin{prop}\label{propGromov} \cite[3.17 Remarks, p. 433]{BH}.\\
 Let $X$ be a $\delta$-hyperbolic space and fix $x$ in $ X$.
 \begin{enumerate}

\item The extended Gromov product $(\cdot , \cdot)_{x}$ is continuous on $X \times X$ , but not necessarily on $ \overline{X} \times \overline{X}.$
\item In the definition of $(a,b)_{x}$, if we have $a$ in $X$ (or $b$ in $X$), then we may always take the respective sequence to be the constant value $a_i = a$ (or $b_{j} =b$).
\item For all $v, w$ in $\overline{X}$ there exist sequences $(a_n)$ and $(b_n)$ such that $v=\lim a_{n}$ and $w=\lim b_{n}$ and $(v,w)_x=\lim_{n \rightarrow +\infty} (a_n,b_n)_x$.

\item For all $u,v$ and $w$ in $\overline{X}$ by taking limits we still have $$(v,w)_x \geq \min{\{(v,u)_{x},(u,w)_{x} \} }-2\delta .$$
\item For all $v,w$ in $\partial X$ and all sequences $(a_i)$ and $(b_j)$ in X with $v= \lim a_ i$ and $w = \lim b_j$, we have:

$$(v,w)_{x} -2\delta \leq  \liminf_{i,j}(a_i ,b_j)_{x} \leq (v,w)_{x}.$$ 

\end{enumerate}
\end{prop}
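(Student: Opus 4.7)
The plan is to verify each of the five items by reducing to the defining $\delta$-hyperbolic inequality on $X$ and then passing carefully to limits, keeping track of the $\sup$ in the extended definition (\ref{gromovextended}). The heart of the argument is the familiar manoeuvre: one $\sup$ in the definition costs one additional $\delta$ each time the inequality is applied through a limit point.

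For (1), continuity on $X\times X$ is immediate from the formula $(y,z)_x=\tfrac{1}{2}(d(x,y)+d(x,z)-d(y,z))$ together with continuity of $d$; failure on $\overline{X}\times\overline{X}$ is baked into the $\sup$ definition and is exhibited by standard examples in trees. For (2), if $v\in X$ and $(a_n)$ is any sequence with $a_n\to v$ in the metric, then for every sequence $(b_j)$ the triangle inequality gives $|(a_i,b_j)_x-(v,b_j)_x|\leq d(v,a_i)$, so the constant choice $a_i=v$ does not decrease $\lim_{i,j}(a_i,b_j)_x$, and hence realises the $\sup$. For (3), choose for each integer $k$ sequences $(a_n^{(k)}),(b_n^{(k)})$ with $v=\lim_n a_n^{(k)}$, $w=\lim_n b_n^{(k)}$ and $\lim_{i,j}(a_i^{(k)},b_j^{(k)})_x\geq (v,w)_x-\tfrac{1}{k}$ (or $\to\infty$ in the infinite case); a diagonal extraction then provides a single pair of sequences whose double limit equals $(v,w)_x$.

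For (4), pick by (3) sequences $(a_n)\to v$, $(b_n)\to u$, $(c_n)\to w$ realising the corresponding extended Gromov products. Applying the $\delta$-hyperbolic inequality on $X$, $(a_i,c_j)_x\geq\min\{(a_i,b_k)_x,(b_k,c_j)_x\}-\delta$, and then taking iterated double limits (first in $k$, then in $i,j$) one sees that the $\sup$ appearing in $(v,u)_x$ and $(u,w)_x$ on the right-hand side costs an extra $\delta$, yielding the stated $2\delta$-loss. For (5), the upper bound $\liminf_{i,j}(a_i,b_j)_x\leq(v,w)_x$ is just the $\sup$-definition. For the lower bound, fix sequences $(\alpha_k)\to v$, $(\beta_\ell)\to w$ realising $(v,w)_x$ by (3); two applications of $\delta$-hyperbolicity give
\[
(a_i,b_j)_x\geq \min\bigl\{(a_i,\alpha_k)_x,(\alpha_k,\beta_\ell)_x,(\beta_\ell,b_j)_x\bigr\}-2\delta.
\]
Since $(a_i)$ and $(\alpha_k)$ both converge at infinity to $v$ (and similarly for $b_j,\beta_\ell$ with $w$), the outer two Gromov products tend to $+\infty$, so only the middle term $(\alpha_k,\beta_\ell)_x\to(v,w)_x$ is binding, producing $\liminf_{i,j}(a_i,b_j)_x\geq(v,w)_x-2\delta$.

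The main obstacle is item (5): one must resist the temptation to apply the inequality only once, and instead route through \emph{both} a sup-realising $\alpha_k$ on the $v$-side and a sup-realising $\beta_\ell$ on the $w$-side, which is precisely what forces the $2\delta$ rather than a single $\delta$. This same $2\delta$ constant then propagates into (4), which explains why the extended Gromov product is only well-defined up to a bounded additive error and underlies the quasi-conformal (rather than conformal) nature of the boundary measures used throughout the paper.
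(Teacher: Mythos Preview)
The paper does not supply its own proof of this proposition: it simply records the statement with a reference to Bridson--Haefliger \cite[3.17 Remarks]{BH} and, for item (5), to Ghys--de la Harpe \cite[Chapitre~7, 8.-~Remarque]{G}. Your argument is precisely the standard one found in those sources, and it is correct.

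One small expository wrinkle in your treatment of (4): you write ``pick by (3) sequences $(a_n)\to v$, $(b_n)\to u$, $(c_n)\to w$ realising the corresponding extended Gromov products'', implicitly using the \emph{same} sequence $(b_n)\to u$ to realise both $(v,u)_x$ and $(u,w)_x$. In general one needs two sequences $(b_n),(b'_n)\to u$, one for each product, and then a further application of $\delta$-hyperbolicity through the pair $(b_k,b'_\ell)$ (which tends to $+\infty$ since both sequences represent $u$). You already carry out exactly this manoeuvre in your proof of (5), so the fix is immediate; it is worth making explicit, since otherwise a reader might wonder why the loss in (4) is $2\delta$ rather than $\delta$.
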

We refer to \cite[8.- Remarque, Chapitre 7, p. 122]{G} for a proof of the statement (5) which is also an exercise, but it will be very useful in this note.

Indeed the Gromov boundary of a proper geodesic space has also a geometrical definition (\cite[Chapter III.H, p. 431]{BH}) and if $v\in \partial X$ we can pick a geodesic ray $c$, namely a map $r:\mathbb{R}_{+} \rightarrow X$ which is an isometry, such that $r(+\infty)=v$ to define the \emph{Busemann function associated to $r$} as
 \begin{equation}\label{buseman}
b_{r}(x)=\lim_{t \rightarrow +\infty} d(x,r(t))-t,
\end{equation}
which is well defined due to the triangle inequality.

We define the \emph{horoshperical distance relative to $r$} as:
\begin{equation}\label{buseman'}
B_{r}(x,y)=b_{r}(x)-b_{r}(y) .
\end{equation}
where $c$ is a geodesic ray such that $v=r(+\infty)$. 
Following the vocabulary of \cite[Definition 1.21]{CM} we define the \emph{Busemann function} as
\begin{equation}\label{buseman}
\beta_{v}(x,y)=2(v,y)_{x}-d(x,y).
\end{equation}
If follows from Proposition \ref{propGromov} (5) that for all $r$ such that $r(+\infty)=v$
\begin{align*}
\beta_{v}(x,y)-4\delta \leq B_{r}(x,y) \leq \beta_{v}(x,y).
\end{align*}
We have the following properties concerning the Busemann functions:

\begin{prop}\label{busemannprop} We have for $x,y,z \in X$ and $v\in \partial X$: 
\begin{enumerate}
\item  $0 \leq \beta_{v}(x,y)+ \beta_{v}(y,x) \leq 4\delta.$
\item  $-d(x,y) \leq \beta_{v}(x,y) \leq d(x,y)+2\delta.$
\item  $\beta_{v}(x,z) \leq \beta_{v}(x,y)+\beta_{v}(y,z) \leq \beta_{v}(x,z) +4\delta.$
\end{enumerate}
\end{prop}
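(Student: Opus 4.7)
All three inequalities follow the same general strategy: write an \emph{exact} identity at the level of the Gromov product for a point $a\in X$ (by expanding the definition and canceling terms involving $d(a,\cdot)$), then pass to the limit $a\to v$ at the cost of a controlled $\delta$-error. The limiting step is justified by Proposition \ref{propGromov}(3) (existence of a sequence realizing the sup in the definition of the extended Gromov product) together with the standard observation that if $a_i,a'_i\to v$ then $(a_i,a'_i)_x\to\infty$, so by $\delta$-hyperbolicity $(a_i,y)_x$ and $(a'_i,y)_x$ coincide up to $O(\delta)$ for large $i$; in particular a single realizing sequence simultaneously computes all of the extended Gromov products at $v$ that appear, each up to $O(\delta)$.

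For (1), the exact identity is $(a,y)_x+(a,x)_y=d(x,y)$, obtained by writing out the two Gromov products and noticing that the $d(a,y)$ and $d(a,x)$ terms cancel the $d(y,a)$ and $d(x,a)$ terms. Passing to the limit through a sequence $a_i\to v$ that realizes $(v,y)_x$ yields $d(x,y)\leq (v,y)_x+(v,x)_y\leq d(x,y)+2\delta$, and substituting into $\beta_v$ gives (1). For (2), the lower bound is $(v,y)_x\geq 0$, inherited from positivity of the Gromov product on $X\times X$ through the sup. The upper bound rests on the triangle-inequality bound $(a,y)_x\leq d(x,y)$ for $a\in X$, which after passing to the limit gives $(v,y)_x\leq d(x,y)+\delta$ and hence $\beta_v(x,y)\leq d(x,y)+2\delta$. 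For (3), the crucial exact identity is
$$2(a,y)_x+2(a,z)_y-2(a,z)_x=d(x,y)+d(y,z)-d(x,z),\qquad a\in X,$$
obtained by expanding the three Gromov products and verifying that every occurrence of $d(a,\cdot)$ appears with opposite signs and cancels. Choosing $a=a_i\to v$ and rewriting each term via $2(v,\cdot)_\cdot=\beta_v(\cdot,\cdot)+d(\cdot,\cdot)$, the identity rearranges to $\beta_v(x,y)+\beta_v(y,z)-\beta_v(x,z)=0$ up to $O(\delta)$, with the sign of the error controlled because each $(a_i,\cdot)_\cdot$ only underestimates the corresponding $(v,\cdot)_\cdot$; this is precisely the two-sided bound claimed in (3).

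The main obstacle is the bookkeeping of the $\delta$-errors: when one passes from the exact identities on $X$ to the inequalities on $\partial X$, three extended Gromov products must be controlled simultaneously by a \emph{single} sequence $a_i\to v$, and the direction of each error must be tracked so that (3) produces the asymmetric interval $[0,4\delta]$ rather than a symmetric window. Once one observes that the realizing sequence of Proposition \ref{propGromov}(3) for $(v,y)_x$ also computes $(v,z)_y$ and $(v,z)_x$ up to $O(\delta)$ by the sequence-comparison lemma above, the asymmetry of the errors (always underestimating, in line with the sup) is automatic and the advertised constants fall out. Alternatively, one can prove (3) by invoking the exact cocycle identity $B_r(x,y)+B_r(y,z)=B_r(x,z)$ for the horospherical distance and the bracket $B_r\leq \beta_v\leq B_r+4\delta$ displayed just above the statement, although this variant yields slightly looser constants and must be combined with (1) to recover the sharp window.
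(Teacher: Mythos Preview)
The paper states Proposition~\ref{busemannprop} without proof, as a standard preliminary fact; there is therefore no proof in the paper to compare against. Your plan is a correct way to establish the result, and the exact identities you wrote down for points $a\in X$ are the right starting points.

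One bookkeeping correction for (3): you propose to pass to the limit along a sequence realizing $(v,y)_x$, but with that choice the term $-2[(v,z)_x-(a_i,z)_x]$ carries an error in $[-2\delta,0]$ (because the sequence \emph{under}estimates $(v,z)_x$, and this term enters with a minus sign), so you land in the symmetric window $[-2\delta,2\delta]$ rather than $[0,4\delta]$. The fix is simply to realize the Gromov product that appears with the negative sign: choose $a_i\to v$ with $(a_i,z)_x\to (v,z)_x$ (Proposition~\ref{propGromov}(3)). Then the third bracket vanishes in the limit, while your sequence-comparison observation gives $0\le (v,y)_x-(a_i,y)_x\le \delta$ and $0\le (v,z)_y-(a_i,z)_y\le \delta$ for large $i$, and summing yields exactly $0\le \beta_v(x,y)+\beta_v(y,z)-\beta_v(x,z)\le 4\delta$. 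With this adjustment your argument goes through with the stated constants; the alternative route via $B_r$ that you mention does give looser constants, as you note, and is not needed.
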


Recall the notion of visual metric on the boundary: fix a basepoint $x$ and let  $\epsilon\leq \frac{\log 2}{4\delta}$. Then there exists a metric $d_{x}^{\epsilon}$ on the boundary such that for all $v,w\in \partial X$
\begin{equation}\label{metricvisu}
\frac{1}{c_{m}} {\rm e}^{ -\epsilon (v,w)_{x}}  \leq d_{x}^{\epsilon}(v,w) \leq c_{m} {\rm e}^ {-\epsilon (v,w)_{x}}
\end{equation}
for some positive constant $c_{m}>0$, see \cite[Chapitre 7, \S3]{G} for more details. 

A particular class of $\delta$-hyperbolic spaces is the class of CAT(-1) spaces. A CAT(-1) space is a proper metric geodesic space such that every geodesic triangle is thinner than its comparison triangle in the hyperbolic plane. For example, every complete simply connected Riemannian manifolds of curvature $\kappa<0$ is a CAT(-1) space.
 In the context of CAT(-1) spaces, the formula 
\begin{equation}\label{distance}
	d_x(v,w)={\rm e}^{-(v,w)_x}
\end{equation}
defines a distance on $\partial  X$ (we set $d_x(v,v)=0$). This is due to M. Bourdon, we refer to \cite[ 2.5.1 Th\'eor\`eme]{Bou} for more details.

 Once a visual parameter $\epsilon$ has been chosen  and a basepoint $x$ in $X$ has been fixed, a ball of radius $r>0$ centered at $v\in \partial X$ with respect to $d^{\epsilon}_{x}$ is denoted by $B(v,r)$.  A ball of radius $r$ centered at $x\in X$ is denoted by $B_{X}(x,r)$.

 As we said in the introduction, if $X$ is \emph{upper gromov bounded by above}, we can choose $w_{x}^{y}\in \partial X$ such that $(v,w_{x}^{y})_{x}\geq d(x,y)-c.$ In particular CAT(-1) spaces are upper Gromov bounded by above and also the hyperbolic groups (viewed as metric spaces) endowed with a left invariant word metric associated with some finite symmetric set of generators (see \cite[Lemma 4.1]{LG})  
\begin{lemma}\label{theta}
Assume that $X$ is \emph{upper Gromov bounded by above}. There exists $c>0$ such that for all $\gamma \in \Gamma$, we have 
\begin{enumerate}
\item
$ (v,y)_{x}\leq (v,w_{x}^{y})+c+2\delta .$
\item $ \min\lbrace (v,w_{x}^{y}),d(x,y) \rbrace -c-2\delta \leq (v,y)_{x}.$ 
\end{enumerate}
\end{lemma}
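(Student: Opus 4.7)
The strategy for both inequalities is to apply the extended Gromov four-point inequality from Proposition \ref{propGromov}(4) to the triple $\{v, y, w_x^y\}$ in $\overline{X}$, and to exploit the defining property of $w_x^y$ given by (\ref{gromovbounded}), namely $(y, w_x^y)_x \geq d(x,y) - c_0$ for some constant $c_0 > 0$ coming from the upper Gromov product assumption.

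For (2), I would place $w_x^y$ in the middle position: Proposition \ref{propGromov}(4) gives
\[
(v,y)_x \;\geq\; \min\bigl\{(v, w_x^y)_x,\,(w_x^y, y)_x\bigr\} - 2\delta.
\]
Since $(w_x^y, y)_x \geq d(x,y) - c_0$, and since the elementary inequality $\min\{a,\, b-c_0\} \geq \min\{a, b\} - c_0$ holds for $c_0 \geq 0$, this immediately yields $(v,y)_x \geq \min\{(v, w_x^y)_x,\, d(x,y)\} - c_0 - 2\delta$, which is (2) with $c := c_0$.

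For (1), I would instead place $y$ in the middle position, obtaining
\[
(v, w_x^y)_x \;\geq\; \min\bigl\{(v, y)_x,\,(y, w_x^y)_x\bigr\} - 2\delta.
\]
If the minimum on the right is $(v,y)_x$, the conclusion $(v,y)_x \leq (v, w_x^y)_x + 2\delta \leq (v, w_x^y)_x + c + 2\delta$ is immediate. Otherwise the minimum is $(y, w_x^y)_x$, and we infer $d(x,y) - c_0 \leq (y, w_x^y)_x \leq (v, w_x^y)_x + 2\delta$, so that $d(x,y) \leq (v, w_x^y)_x + c_0 + 2\delta$. To convert this into a bound on $(v,y)_x$ itself, I would invoke Proposition \ref{busemannprop}(2), which states $\beta_v(x,y) \leq d(x,y) + 2\delta$; via the identity $\beta_v(x,y) = 2(v,y)_x - d(x,y)$ from (\ref{buseman}), this gives the universal upper bound $(v,y)_x \leq d(x,y) + \delta$. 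Combining the two inequalities then produces $(v,y)_x \leq (v, w_x^y)_x + c_0 + 3\delta$, and it suffices to set $c := c_0 + \delta$ to recover the form stated in the lemma.

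No serious obstacle is expected here; the proof is essentially a careful accounting of the additive $\delta$-corrections in the four-point condition when passing to the boundary. The only point requiring mild care is that $c$ in the lemma statement is a new constant, not literally the one in (\ref{gromovbounded}), so that the small $\delta$-slop produced when $v \in \partial X$ (as opposed to being an interior point) can be absorbed once and for all. The two cases in the proof of (1) (which mirror the two situations in the four-point inequality) are the only case analysis, and both are handled by combining the defining property of $w_x^y$ with the Busemann upper bound.
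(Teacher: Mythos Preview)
Your proof is correct and follows the same route as the paper (the four-point inequality of Proposition~\ref{propGromov}(4) applied to the triple $\{v,y,w_x^y\}$); the paper only writes out (1) and leaves (2) to the reader, and your argument for (2) is exactly what is intended. One minor simplification for (1): in the case where the minimum is $(y,w_x^y)_x$, you do not need the detour through Proposition~\ref{busemannprop}(2); since $y\in X$, the elementary bound $(v,y)_x\le d(x,y)$ holds directly (take the constant sequence for $y$ and use $(a_i,y)_x\le d(x,y)$), which combined with $(y,w_x^y)_x\ge d(x,y)-c$ gives $\min\{(v,y)_x,(y,w_x^y)_x\}\ge (v,y)_x-c$ in one stroke and recovers the stated constant $c+2\delta$ without the extra~$\delta$.
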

\begin{proof}
We prove (1). We have by Proposition \ref{propGromov} $(4)$
\begin{align*}
 (v,w_{x}^{y})_{x}&\geq \min \lbrace (v,y)_{x}, (y,w_{x}^{y})_{x}\rbrace -2\delta\\
& \geq (v,y)_{x}-c -2\delta.
\end{align*}
\end{proof}

The radial limit set $\Lambda^{rad}_{\Gamma}$ is a subset of $\Lambda_{\Gamma}$ such that a point $v\in \Lambda^{rad}_{\Gamma}$ if and only if there exists a positive constant $C>0$ and a sequence $(\gamma_{n})_{n\in \mathbb{N}}$ with $(\gamma_{n} y)_{n\in \mathbb{N}}$ converging to $v$ such that
\begin{equation}\label{radial'}
d(\gamma_{n}y,x)-(\gamma_{n}y,v)_{x}\leq C,
\end{equation}
 Notice that the definition does not depend on $x$ or $y$. We have:

\begin{lemma}\label{radial}
Assume that $X$ is upper Gromov product bounded by above. If $v\in \Lambda^{rad}_{\Gamma}$ there exist $C>0$ and a sequence $(\gamma_{n})$ in $\Gamma$ such that $(\gamma_{n}x) \in \Omega_{C}(v)$.
\end{lemma}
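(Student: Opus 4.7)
The plan is to unpack the radial condition on $v$, combine it with the Gromov inequality from Lemma \ref{theta}, and then translate the resulting Gromov product bound into a visual metric bound that fits inside the definition of $\Omega_C(v)$.

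\medskip

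First I would use that the definition of $\Lambda^{rad}_\Gamma$ in \eqref{radial'} is independent of the auxiliary point $y$: taking $y=x$, the hypothesis $v\in\Lambda^{rad}_\Gamma$ produces a constant $C_0>0$ and a sequence $(\gamma_n)\subset\Gamma$ with $\gamma_n x\to v$ satisfying
\[
(\gamma_n x,v)_x\geq d(x,\gamma_n x)-C_0.
\]
Next I would feed this into Lemma \ref{theta} (1): since $X$ is upper Gromov product bounded by above, the point $w_x^{\gamma_n x}\in\partial X$ exists and satisfies
\[
(w_x^{\gamma_n x},v)_x\geq (v,\gamma_n x)_x-c-2\delta\geq d(x,\gamma_n x)-M,
\]
where $M:=C_0+c+2\delta$ is independent of $n$.

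\medskip

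Then I would apply the visual metric estimate \eqref{metricvisu} to convert this into a boundary distance bound:
\[
d_x^{\epsilon}(w_x^{\gamma_n x},v)\leq c_m\,{\rm e}^{-\epsilon (w_x^{\gamma_n x},v)_x}\leq c_m\,{\rm e}^{\epsilon M}\,{\rm e}^{-\epsilon d(x,\gamma_n x)}.
\]
Since $\gamma_n x\to v$ and $\Gamma$ is discrete, $d(x,\gamma_n x)\to\infty$, so after discarding finitely many terms we have $d(x,\gamma_n x)\geq 1$; the polynomial factor $d(x,\gamma_n x)^{\epsilon/\alpha}$ is then $\geq 1$, and choosing $C:=c_m\,{\rm e}^{\epsilon M}$ yields
\[
d_x^{\epsilon}(w_x^{\gamma_n x},v)\leq C\,d(x,\gamma_n x)^{\frac{\epsilon}{\alpha}}\,{\rm e}^{-\epsilon d(x,\gamma_n x)}.
\]
This is exactly the defining inequality of $\Omega_C(v)$.

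\medskip

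The only remaining point is that $\Omega_C(v)$ is defined as a subset of $GH(\Lambda_\Gamma)$. Here I would invoke that $\Lambda_\Gamma$ is $\Gamma$-invariant, hence so is $GH(\Lambda_\Gamma)$; choosing the base point $x$ inside $GH(\Lambda_\Gamma)$ (which we are free to do, since the Gromov product estimates above hold independent of base point up to constants), the whole orbit $\Gamma x$ lies in $GH(\Lambda_\Gamma)$, so $\gamma_n x\in\Omega_C(v)$ as required. I do not expect a genuine obstacle here; the only subtle point is the translation between the radial condition (which gives closeness in the Gromov product centered at $x$) and closeness of the auxiliary boundary point $w_x^{\gamma_n x}$ to $v$, which is precisely what Lemma \ref{theta} is designed to provide.
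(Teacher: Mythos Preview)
Your proof is correct and follows essentially the same route as the paper: both use the radial condition to bound $(v,\gamma_n x)_x$ from below, apply Lemma~\ref{theta}(1) to pass to $(w_x^{\gamma_n x},v)_x$, convert via the visual metric inequality, and absorb the polynomial factor using $d(x,\gamma_n x)\geq 1$. The handling of the base point $x\in GH(\Lambda_\Gamma)$ is also identical.
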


\begin{proof}
Let $x$ be in $GH(\Lambda_{\Gamma})$ and let $v$ be in  $\Lambda^{rad}_{\Gamma}$. There exist $C'>0$ and a sequence $(\gamma_{n})\in \Gamma$ such that
\begin{align}\label{eqexo}
d(\gamma_{n}x,x)-(\gamma_{n} x,v)_{x}\leq C'.
\end{align}

We can assume that $d(x,\gamma_{n}x)\geq 1$ for all $n$.
For all $n$ consider the point $w_{x}^{\gamma _{n} x}$ in $\partial X$ and apply Lemma \ref{theta} (1) to obtain $$ (\gamma_{n}x,v)_{x} \leq (w_{x}^{\gamma_{n} x},v)_{x}+c+2\delta .$$

 Combining the above inequality with Inequality (\ref{eqexo}) and with Inequality (\ref{metricvisu}) we obtain:
\begin{align*}
d^{\epsilon}_{x} (v,w_{x}^{\gamma_{n} x})&\leq c_{m}  {\rm e}^{\epsilon(C'+c+2\delta)} {\rm e}^{-\epsilon d(x,\gamma_{n}x)}\\
& \leq c_{m}  {\rm e}^{\epsilon(C'+c+2\delta)} d(x,\gamma_{n}x)^{\epsilon / \alpha} {\rm e}^{-\epsilon d(x,\gamma_{n}x)},
 \end{align*}
 and by Definition \ref{nonrad}, $(\gamma_{n}x) \in \Omega_{C}(v)$ with the constant $C=c_{m}  {\rm e}^{\epsilon(C'+c+2\delta)}$. This completes the proof.
\end{proof}
 It is well known that for quasi-convex cocompact groups we have $\Lambda_{\Gamma}=\Lambda^{rad}_{\Gamma}$, see for example \cite[Chapitre 1, Pr\'eliminaires 1F, Proposition 1.10]{Ro}. Notice also in CAT(-1) spaces that $GH(\Lambda_{\gamma})=CH(\Lambda_{\Gamma})$.

As we have mentioned in the introduction, if the weak inequality of Harish-Chandra holds for some $x$ then it holds for all $x$ in $X$. Indeed observe that for all $x$ and $y$, $$1+|\gamma |_{x}\leq (1+2d(x,y))(1+| \gamma |_{y})  $$ and  observe also that Proposition \ref{busemannprop} implies  $$ \forall \gamma \in \Gamma, \phi_{x}(\gamma)\leq  C\phi_{y}(\gamma),$$
for some positive constant $C>0$ which does not depend on $\gamma$.

\subsection{Standard facts about measure theory}
\subsubsection{Ahlfors regularity and Harish-Chandra function}

Let $(Z,d,m)$ be a compact metric measure space with a metric $d$ and a measure $m$. We denote by $\mbox{Diam}(Z)$ the diameter of $Z$. We say that the metric measure space $Z$ is \emph{Ahlfors $D$-regular }if there exists a positive constant $k>0$ such that for all $z$ in $Z$ and $0<r<\mbox{Diam}(Z)$ we have $$k^{-1}r^{D} \leq m(B(z,r))\leq k r^{D}.$$ If only the right hand side inequality holds, then we say that $(Z,d,m)$ is \emph{upper Ahlfors $D$-regular}.

Let $(X,d)$ be a $\delta$-hyperbolic space, let $\Gamma$ be a quasi-convex cocompact discrete group of isometries of $X$ and let $\mu=(\mu_{x})_{x \in X}$ be a $\Gamma$-invariant quasi-conformal density of dimension  $\alpha(\Gamma)$. Fix an origin $x$ in $X$ and consider the metric measure space $(\partial \Lambda_{\Gamma},d_{x}^{\epsilon},\mu_{x})$, where $d_{x}^{\epsilon}$ is a visual metric with visual parameter $\epsilon>0$. A fundamental property of the conformal densities (which goes back to D. Sullivan in the case of hyperbolic spaces $\mathbb{H}^{n}$, see \cite{Su}) and which is due to M. Coornaert in general $\delta$-hyperbolic spaces (see \cite[Proposition 7.4]{Coo}) is that the metric measure space $(\Lambda_{\Gamma},d^{\epsilon}_{x},\mu_{x})$ satisfies Ahlfors regularity conditions. More precisely, in the case of general proper and geodesic $\delta$-hyperbolic spaces, the metric measure space $(\Lambda_{\Gamma},d^{\epsilon}_{x},\mu_{x})$ is Ahlfors $D(\Gamma)$-regular where $D(\Gamma)=\alpha(\Gamma) /\epsilon$.
Using the Ahlfors $D(\Gamma)$-regularity of $(\Lambda_{\Gamma},d^{\epsilon}_{x},\mu_{x})$, we can obtain the Harish-Chandra estimates as follows:

\begin{prop}\label{H-CHestimates}
Assume that $X$is upper Gromov bounded by above.
Let $\mu=(\mu_{x})_{x\in X}$ be a $\Gamma$-invariant conformal density of dimension $\alpha(\Gamma)$, where $\Gamma$ is a quasi-convex cocompact discrete group of isometries of $X$.  Then for each $x$ in $X$ there exists $R>0$ such that the function $\varphi_{x}$ satisfies the Harish-Chandra estimates on $\Gamma x \backslash B_{X}(x,R)$.

Moreover, for each $x$ in $X$ there exists $R>0$ such that $\varphi_{x}$ satisfies the Harish-Chandra estimates on GH$(\Lambda_{\Gamma})\backslash{B_{X}(x,R)}$.
\end{prop}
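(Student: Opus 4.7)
The plan is to reduce the estimate to bounding a single integral on the boundary and to sandwich it between two degree-one polynomials in $d(x,y)$. Since $\mu$ is conformal of dimension $\alpha = \alpha(\Gamma)$, we have $P(x,y,v) = e^{\beta_v(x,y)}$ and $\beta_v(x,y) = 2(v,y)_x - d(x,y)$, so
$$\varphi_x(y) = \int_{\Lambda_\Gamma} e^{\frac{\alpha}{2}\beta_v(x,y)} \, d\mu_x(v) = e^{-\frac{\alpha}{2} d(x,y)}\, I(y), \qquad I(y) := \int_{\Lambda_\Gamma} e^{\alpha(v,y)_x} \, d\mu_x(v).$$
Thus the Harish-Chandra estimates (\ref{HCHestim}) are equivalent to $Q_1(d(x,y)) \leq I(y) \leq Q_2(d(x,y))$ for linear $Q_1, Q_2$ with positive coefficients.

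Next I combine Lemma \ref{theta} with the visual-metric inequality (\ref{metricvisu}) in order to pass from Gromov products to a quantity on $\partial X$. Set $D := \alpha/\epsilon$ and $\rho(v) := d_x^\epsilon(v, w_x^y)$. Lemma \ref{theta}(1) together with Proposition \ref{busemannprop}(2) gives $(v,y)_x \leq \min\{d(x,y),\,(v,w_x^y)_x\} + \mathrm{const}$, while Lemma \ref{theta}(2) gives the matching lower bound. Exponentiating and inserting (\ref{metricvisu}) yields positive constants $c, C$ independent of $y$ with
$$c \min\{e^{\alpha d(x,y)},\, \rho(v)^{-D}\} \leq e^{\alpha(v,y)_x} \leq C \min\{e^{\alpha d(x,y)},\, \rho(v)^{-D}\}.$$
The problem is therefore reduced to estimating $\int_{\Lambda_\Gamma} \min\{e^{\alpha d(x,y)},\, \rho(v)^{-D}\}\, d\mu_x(v)$, which depends only on the metric measure space $(\Lambda_\Gamma, d_x^\epsilon, \mu_x)$.

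For the upper bound, set $r_0 := e^{-\epsilon d(x,y)}$, so that $r_0^{-D} = e^{\alpha d(x,y)}$, and split $\Lambda_\Gamma$ into the ball $B(w_x^y, r_0)$ and the dyadic annuli $A_n := \{v \in \Lambda_\Gamma : 2^n r_0 \leq \rho(v) < 2^{n+1} r_0\}$ for $n=0,\ldots,N$ with $N \asymp \epsilon\, d(x,y)$. Coornaert's upper Ahlfors $D(\Gamma)$-regularity of $(\Lambda_\Gamma, d_x^\epsilon, \mu_x)$, extended to arbitrary centers via $\mu_x(B(w,r)) \leq \mu_x(B(v^*, 2r))$ for any $v^* \in \Lambda_\Gamma \cap B(w,r)$, yields $\mu_x(A_n) \leq \mathrm{const} \cdot (2^{n+1} r_0)^D$, so each annulus contributes a bounded amount to $I(y)$. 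The ball contributes a bounded amount by the same estimate. Summing yields $I(y) \leq \mathrm{const} \cdot (1 + d(x,y))$.

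For the lower bound, the decisive point is to arrange $w_x^y$ \emph{inside} $\Lambda_\Gamma$, so that the lower Ahlfors bound is available at that center. When $y \in \mathrm{GH}(\Lambda_\Gamma)$, by definition $y$ lies on a geodesic with endpoints $v_\pm \in \Lambda_\Gamma$, and one checks from $\delta$-hyperbolicity that one of $v_\pm$ satisfies (\ref{gromovbounded}) after enlarging $c$; when $y \in \Gamma x$, quasi-convex cocompactness gives $d(y, \mathrm{GH}(\Lambda_\Gamma))$ bounded uniformly, and the same reduction applies. With $w_x^y \in \Lambda_\Gamma$ fixed, choose a ratio $R_0 > 1$ so that $R_0^D \geq 2 k^2$ (where $k$ is the Ahlfors constant), and partition into annuli of ratio $R_0$ inside $\mathrm{Diam}(\Lambda_\Gamma)$. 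The lower Ahlfors bound then gives $\mu_x(A_n) \geq k^{-1}(R_0^{n+1}r_0)^D - k(R_0^n r_0)^D \gtrsim (R_0^n r_0)^D$, so each annulus contributes a positive constant to $I(y)$. Provided $d(x,y) \geq R$ is large enough to guarantee at least one such annulus, summing over the $\gtrsim d(x,y)$ available annuli yields $I(y) \geq \mathrm{const} \cdot d(x,y)$. The main obstacle in the whole argument is precisely this selection of $w_x^y$ in $\Lambda_\Gamma$: without it, only the upper Ahlfors bound is available at $w_x^y$, and the lower estimate on $I(y)$ breaks down.
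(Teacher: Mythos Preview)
Your proof is correct and follows precisely the approach the paper indicates without spelling out: the paper simply refers to \cite[Proposition~3.3]{BM}, \cite[Proposition~4.4]{B2}, and \cite[Lemma~5.1]{LG}, all of which proceed by the same reduction to $I(y)=\int_{\Lambda_\Gamma} e^{\alpha(v,y)_x}\,d\mu_x$, the replacement of $(v,y)_x$ by $\min\{d(x,y),(v,w_x^y)_x\}$ via Lemma~\ref{theta}, and a dyadic decomposition controlled by the Ahlfors $D(\Gamma)$-regularity of $(\Lambda_\Gamma,d_x^\epsilon,\mu_x)$. Your explicit care in arranging $w_x^y\in\Lambda_\Gamma$ (via the geodesic endpoints when $y\in\mathrm{GH}(\Lambda_\Gamma)$, and via bounded distance to $\mathrm{GH}(\Lambda_\Gamma)$ when $y\in\Gamma x$) so that the \emph{lower} Ahlfors bound is available is exactly the point these references handle, and is the reason the statement restricts to $\Gamma x$ and $\mathrm{GH}(\Lambda_\Gamma)$.
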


The arguments coming from \cite[Proposition 3.3]{BM} combined with the assumption of   \emph{upper Gromov bounded by above} give a proof of the Harish-Chandra estimates, as we can see in \cite[Propositon 4.4]{B2} in CAT(-1) spaces or in \cite[Lemma 5.1]{LG} in the case of hyperbolic groups. 

The Harish-Chandra estimates are on GH$(\Lambda_{\Gamma})\backslash{B_{X}(x,R)}$ and not on GH$(\Lambda_{\Gamma})$ because we want to obtain polynomials with positive coefficients.

\subsubsection{Maximal inequality of type (1,1)}
Let $\nu$ be a complex measure and we denote by $\|\nu \|$ its norm. 
Define the maximal function associated with $\nu$ with respect to $\mu_{x}$ as 
\begin{equation}\label{defimaxmesure}
\mathcal{M}\nu (v):=\sup_{r>0}{ \frac{ |\nu |\big(B(v,r)\big)}{\mu_{x}\big(B(v,r)\big)} },
\end{equation}
and notice that if ${\rm d}\nu= f {\rm d}\mu_{x}$ with $f\in L^{1}(\partial X,\mu_{x})$ we denote $\mathcal{M}\nu$ by $\mathcal{M}f$ and we have: 

\begin{equation}\label{defimaxfonction}
\mathcal{M}f(v)=\sup_{r>0} \frac{1}{\mu_{x}\big(B(v,r)\big)} \int_{B(v,r)}|f(w)|{\rm d}\mu_{x}(w).
\end{equation}

\subsubsection{Covering, Vitali's lemma and maximal inequality of type $(1,1)$}
We follow \cite[Chapitre 7]{Ru} and state the following properties.
\begin{lemma} 
Let $(Z,d)$ be a metric space. Let $A\subset Z $ such that $A\subset \cup_{i \in I} B(z_{i},r_{i})$ where $I$ is a finite set. Then there exists $I^{*}\subset I$ such that for all $i\neq j$ in $I^{*}$ we have $B(z_{i},r_{i})\cap B(z_{j},r_{j})=\varnothing$ and $A\subset  \cup_{i \in I^{*}}B(z_{i},3r_{i})$.
\end{lemma}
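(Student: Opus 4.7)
The statement is the classical finite Vitali covering lemma, so the plan is the standard greedy argument. First I would reduce to the case where the indexing family $I$ is ordered by decreasing radius: relabel so that $r_{1}\geq r_{2}\geq \dots \geq r_{n}$, which is possible since $I$ is finite. The selection of $I^{*}$ proceeds inductively: put $i_{1}=1$, and at step $k+1$ pick the smallest index $i_{k+1}$ (in the reordered list) such that $B(z_{i_{k+1}},r_{i_{k+1}})$ is disjoint from every previously selected ball $B(z_{i_{1}},r_{i_{1}}),\dots,B(z_{i_{k}},r_{i_{k}})$. Since $I$ is finite, this process terminates, and by construction the selected balls are pairwise disjoint, giving the first conclusion.

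The key point is then the inclusion $A\subset \bigcup_{i\in I^{*}} B(z_{i},3r_{i})$. For any discarded index $j\in I\setminus I^{*}$, the selection rule forces the existence of some $i\in I^{*}$ with $i$ appearing earlier in the ordering (hence $r_{i}\geq r_{j}$) and with $B(z_{i},r_{i})\cap B(z_{j},r_{j})\neq \varnothing$. Picking $w$ in this intersection and using the triangle inequality, for every $y\in B(z_{j},r_{j})$ one has
\begin{equation*}
d(y,z_{i}) \leq d(y,z_{j})+d(z_{j},w)+d(w,z_{i})< r_{j}+r_{j}+r_{i}\leq 3r_{i},
\end{equation*}
so $B(z_{j},r_{j})\subset B(z_{i},3r_{i})$. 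Combined with the trivial inclusion $B(z_{i},r_{i})\subset B(z_{i},3r_{i})$ for $i\in I^{*}$ and the hypothesis that the original family covers $A$, this yields the claimed covering.

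No real obstacle is expected: the only subtlety is making sure the greedy procedure is well defined, which follows from finiteness of $I$, and keeping track of the ordering so that the radius comparison $r_{i}\geq r_{j}$ is available precisely when one needs to absorb a discarded ball into the $3r$-dilation of a selected one. If one wanted to work with an infinite family, one would replace the ordering by a supremum argument and a transfinite or countable iteration, but for the finite statement here the proof is essentially a one-line induction.
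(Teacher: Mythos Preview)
Your argument is correct and is exactly the standard greedy proof of the finite Vitali covering lemma; the paper itself does not supply a proof of this lemma but merely states it (referring to \cite{Ru} for the ensuing maximal inequality), so there is nothing to compare beyond noting that your write-up matches the classical argument found in that reference.
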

And here is the fundamental tool of our main result.
\begin{prop}\label{maxim}\emph{Maximal inequality of type $(1,1)$.}\\
Let $(Z,d,m)$ be a metric measure space such that $m$ is a (upper) Ahlfors $D$-regular measure. Let $\nu$ be a complex measure on $Z$.
Then we have $$ m( \{ \mathcal{M} \nu >t  \} ) \leq C 3^{D}\frac{\|\nu\|}{t}.$$
\end{prop}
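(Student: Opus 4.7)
The plan is to execute the standard Hardy--Littlewood maximal inequality argument, using the Vitali covering lemma stated just above as the combinatorial core and Ahlfors regularity to pass between measures of balls of comparable radii.

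First I would fix $t>0$ and set $E_t:=\{v\in Z : \mathcal{M}\nu(v)>t\}$. By the definition of the maximal function, to each $v\in E_t$ corresponds some radius $r_v>0$ with
\[
|\nu|\bigl(B(v,r_v)\bigr) > t\, m\bigl(B(v,r_v)\bigr).
\]
Since $|\nu|$ is finite and $m$ is upper Ahlfors $D$-regular (hence $m(B(v,r))\leq kr^D$), the numbers $r_v$ stay bounded: otherwise $|\nu|(Z)\geq |\nu|(B(v,r_v))>t\,m(B(v,r_v))$ would blow up to $+\infty$ against $m(B(v,r))\geq k^{-1}r^D\to\infty$. So the radii lie in a bounded set, which will be what lets us apply the Vitali lemma.

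Next I would reduce to a compact subset. By inner regularity of $m$, it suffices to bound $m(K)$ for an arbitrary compact $K\subset E_t$. The open cover $\{B(v,r_v)\}_{v\in K}$ admits a finite subcover, to which the Vitali lemma applies: it extracts a pairwise disjoint finite subcollection $\{B(v_i,r_{v_i})\}_{i\in I^*}$ such that $K\subset\bigcup_{i\in I^*} B(v_i,3r_{v_i})$. Using Ahlfors regularity twice, namely $m(B(v_i,3r_{v_i}))\leq k(3r_{v_i})^D$ and $r_{v_i}^D\leq k\, m(B(v_i,r_{v_i}))$, I obtain
\[
m(K)\leq \sum_{i\in I^*} m\bigl(B(v_i,3r_{v_i})\bigr) \leq k^2\cdot 3^D \sum_{i\in I^*} m\bigl(B(v_i,r_{v_i})\bigr).
\]
Then the defining inequality $m(B(v_i,r_{v_i}))<t^{-1}|\nu|(B(v_i,r_{v_i}))$ combined with disjointness of the balls yields
\[
\sum_{i\in I^*} m\bigl(B(v_i,r_{v_i})\bigr) < \frac{1}{t}\sum_{i\in I^*}|\nu|\bigl(B(v_i,r_{v_i})\bigr) \leq \frac{\|\nu\|}{t}.
\]
Taking the supremum over compact $K\subset E_t$ gives $m(E_t)\leq k^2\cdot 3^D\,\|\nu\|/t$, which is the claimed inequality with $C=k^2$.

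There is no genuine obstacle here; the only delicate point is verifying that the radii $r_v$ are bounded so that the finite subcover / Vitali lemma can be invoked legitimately, and being careful that the stated hypothesis ``upper Ahlfors'' is really used together with a lower bound on $m(B(v,r))$ at the step $r_{v_i}^D\leq k\,m(B(v_i,r_{v_i}))$ --- in the paper's applications the density $\mu_x$ on $(\Lambda_\Gamma,d_x^\epsilon)$ is genuinely Ahlfors $D(\Gamma)$-regular by Coornaert's theorem, so both bounds are available.
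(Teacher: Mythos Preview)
Your argument is correct and is the standard Vitali covering proof; the paper does not give its own argument but simply cites Rudin's \emph{Real and Complex Analysis}, Th\'eor\`eme~7.4, which is exactly this proof carried out for Lebesgue measure on $\mathbb{R}^n$. Your closing remark is apt---the step $r_{v_i}^D\le k\,m(B(v_i,r_{v_i}))$ genuinely uses the lower Ahlfors bound, so the parenthetical ``(upper)'' in the statement must be read as full Ahlfors regularity (which Coornaert's theorem indeed provides for $(\Lambda_\Gamma,d_x^\epsilon,\mu_x)$)---and the boundedness-of-radii paragraph is harmless but unnecessary once you have passed to a compact $K$ and extracted a finite subcover.
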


We refer to \cite[Th\'eor\`eme 7.4]{Ru} for a proof.

\section{ Theorem \`a la Fatou}\label{Fatouu}

\subsection{Continuity}
We recall a result obtained in \cite[Section 7]{B2} in the context of CAT(-1) spaces and give the proofs of the results in the slightly more general context of quasi-conformal \emph{continuous} densities.\\
Let $X$ be proper geodesic $\delta$-hyperbolic space which is upper Gromov product bounded by above and assume that the Gromov boundary $\partial X$ is endowed with visual metrics $(d_{x}^{\epsilon})_{x\in X}$ of visual parameter $\epsilon>0$.
Let $\Gamma$ be a discrete group of isometries of $X$ and consider $\mu$ a $\Gamma$-invariant quasi-conformal density of dimension $\alpha$. Observe that the closure of $GH(\Lambda_{\Gamma})$ in $\overline{X}$ satisifes $\overline{GH(\Lambda_{\Gamma})}\cap \partial X=\Lambda_{\Gamma}$. 
For each $x\in X$ and for all $f\in L^{1}(\partial X, \mu_{x})$ define 

\begin{equation}\label{extension}
\overline{\mathcal{P}_{0} }f: y \in \overline{GH(\Lambda_{\Gamma})} \mapsto   \left\{
    \begin{array}{ll}
      \big( \mathcal{P}_{0} f \big)(y) & \mbox{if } y \in GH(\Lambda_{\Gamma}) \\
        f(y) & \mbox{if }  y\in \Lambda_{\Gamma}. 
    \end{array}
		\right. 
		\end{equation}
		
The goal of this subsection is to prove the following proposition:		
\begin{prop}\label{continuity} 
Let $\mu$ be a $\Gamma$-invariant \emph{continuous} quasi-conformal density of dimension $\alpha$ and fix an origin $x$ in $X$. Assume that the function $\varphi_{x}$ satisfies the left hand side inequality of the Harish-Chandra estimates. Then for all continuous functions $f$ in $C(\partial X)$ the function $\overline{\mathcal{P}_{0} }f$ is continuous on $\overline{GH(\Lambda_{\Gamma})}$.
\end{prop}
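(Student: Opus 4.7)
The plan is to split the argument into continuity on the interior $GH(\Lambda_\Gamma)$ and continuity at boundary points $v_0 \in \Lambda_\Gamma$, with the latter being the real content.

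For the interior, the continuity hypothesis on $\mu$ says exactly that $(x,y,v)\mapsto \frac{d\mu_y}{d\mu_x}(v)$ is continuous, hence so is $(y,v)\mapsto P(x,y,v)^{\alpha/2}$. Combined with the quasi-conformal bound $P(x,y,v)^{\alpha/2}\le C_q^{1/2}e^{\alpha\beta_v(x,y)/2}$, which yields a uniform dominating function on any compact subset of $X$, dominated convergence shows that both the numerator $P_{0,\mu_x}f(y)$ and the denominator $\varphi_x(y)$ are continuous on $X$. Since $\varphi_x(y)>0$, the ratio $\mathcal{P}_0 f$ is continuous on $X$, in particular on $GH(\Lambda_\Gamma)$.

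For continuity at $v_0\in\Lambda_\Gamma$, I consider a sequence $y_n\to v_0$ in $\overline{GH(\Lambda_\Gamma)}$. If the $y_n$ lie in $\Lambda_\Gamma$, the assertion reduces to continuity of $f$ on $\partial X$, so after passing to a subsequence I may assume $y_n\in GH(\Lambda_\Gamma)$ with $d(x,y_n)\to\infty$. The plan is to prove the Dirac-type concentration
\[
\overline{\mathcal{P}_0}f(y_n)-f(v_0)=\int_{\partial X}\bigl[f(v)-f(v_0)\bigr]\frac{P(x,y_n,v)^{\alpha/2}}{\varphi_x(y_n)}\,d\mu_x(v)\longrightarrow 0.
\]
Given $\eta>0$, I fix $r>0$ so small that $|f(v)-f(v_0)|<\eta$ for $v\in B(v_0,r)$; the integral over $B(v_0,r)$ is then bounded by $\eta$ since the normalized kernel is a probability density. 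The burden is entirely on the complement $\partial X\setminus B(v_0,r)$.

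The main step is estimating the far part. Using Proposition \ref{propGromov}(4), from $(y_n,v_0)_x\to+\infty$ I get, for $v\notin B(v_0,r)$ and $n$ large, the bound $(v,y_n)_x\le (v,v_0)_x+2\delta\le \tfrac{1}{\epsilon}\log(c_m/r)+2\delta$, which is uniform in $v$ outside the ball. Plugging this into the quasi-conformal bound
\[
P(x,y_n,v)^{\alpha/2}\le C_q^{1/2}\,e^{\alpha(v,y_n)_x}\,e^{-\alpha d(x,y_n)/2}
\]
gives $P(x,y_n,v)^{\alpha/2}\le C(r)\,e^{-\alpha d(x,y_n)/2}$ for such $v$. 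Integrating against $\mu_x$ and dividing by the denominator, the assumed left-hand Harish-Chandra estimate $\varphi_x(y_n)\ge Q_1(d(x,y_n))\,e^{-\alpha d(x,y_n)/2}$ cancels the exponential and leaves
\[
\frac{1}{\varphi_x(y_n)}\int_{\partial X\setminus B(v_0,r)}P(x,y_n,v)^{\alpha/2}\,d\mu_x(v)\le \frac{C(r)\,\mu_x(\partial X)}{Q_1(d(x,y_n))}\longrightarrow 0,
\]
since $Q_1$ has positive coefficients and $d(x,y_n)\to\infty$. Combining the two pieces and letting $\eta\to 0$ completes the argument.

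The main obstacle is the far part: one has to translate the topological convergence $y_n\to v_0$ into an effective upper bound on $(v,y_n)_x$ that is uniform for $v$ outside a visual ball, and then balance this against the polynomial lower bound coming from the Harish-Chandra estimate. Once that cancellation is set up, everything else is soft (continuity of $f$ and of the Radon-Nikodym derivative).
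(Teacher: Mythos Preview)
Your proof is correct and follows the same overall strategy as the paper: split into interior continuity (soft, from continuity of the Radon--Nikodym derivative) and boundary continuity via a near/far decomposition, where the near part is controlled by continuity of $f$ and the far part by the exponential decay of the kernel against the polynomial lower bound $Q_1$ from the Harish-Chandra estimate.

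The one genuine difference is in how you handle the far part. The paper packages the argument in a Dirac--Weierstra\ss{} family framework and, to bound $(v,y)_x$ for $v\notin B(v_0,r)$, routes through the auxiliary boundary point $w_x^y$ furnished by the \emph{upper Gromov product bounded by above} hypothesis: it uses Lemma~\ref{theta}(1) to pass from $(v,y)_x$ to $(v,w_x^y)_x$, then Lemma~\ref{visual} and the triangle inequality in $d_x^\epsilon$ to bound $d_x^\epsilon(v,w_x^y)\ge r/2$. You instead apply the hyperbolic inequality of Proposition~\ref{propGromov}(4) directly to the triple $(v,y_n,v_0)$, using that $(y_n,v_0)_x\to\infty$ forces the minimum to be $(v,y_n)_x$. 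This is more elementary and, notably, does \emph{not} use the upper-Gromov-bounded hypothesis at all for this proposition; the paper's detour via $w_x^y$ is unnecessary here (though that hypothesis is genuinely needed elsewhere, e.g.\ in Proposition~\ref{mainprop}). The cost of your shortcut is only that you lose the reusable abstraction of the Dirac--Weierstra\ss{} family, which the paper does not exploit again anyway.
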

We give a proof of this standard result in the case of the unit disc, in the context of $\delta$-hyperbolic spaces.
We follow a standard method based on a sequence of functions called ``mollifiers" or called  `` approximation of the identity", or also called a Dirac-Weierstra$\ss$ family.

\subsubsection{Dirac-Weierstra$\ss$ family}\label{DW}
 We follow \cite[Chapter 2, \S 2.1, p\ 46]{Jo} and adapt the definition of a Dirac-Weierstra$\ss$ family to any finite and positive measure $\nu$.
 
\begin{defi}
\label{diracweier} A Dirac-Weierstra$\ss$ family $(K(y,\cdot))_{y\in GH(\Lambda_{\Gamma})}$ with respect to $\nu$, is a continuous map
 $K: (y,v) \in GH(\Lambda_{\Gamma})\times\Lambda_{\Gamma} \mapsto K(y,v) \in \mathbb{R}$  satisfying
\begin{enumerate}
	\item $K(y,v)\geq 0$ for all $v \in \Lambda_{\Gamma}$ and $y\in GH(\Lambda_{\Gamma})$,
	\item $\int_{\Lambda_{\Gamma}} K(y,v) d\nu(v) =1$ for all $y \in GH(\Lambda_{\Gamma})$,
	\item for all $ v_{0} \in \Lambda_{\Gamma}$ and for all $ r >0$ we have: $$\int_{\Lambda_{\Gamma} \backslash B(v_{0},r)}K(y,v)d\nu(v) \rightarrow 0 ~~\mbox{as}~~ y \rightarrow v_{0}. $$
\end{enumerate}
\end{defi} 

A Dirac-Weierstra$\ss$ family yields an integral operator $\mathcal{K}$:
\begin{align*}
 \mathcal{K}:   f \in L^{1}(\Lambda_{\Gamma},\nu) & \mapsto \mathcal{K}f \in C(GH(\Lambda_{\Gamma}))
\end{align*}
defined as :
\begin{align*}
\mathcal{K}f :  y \in  GH(\Lambda_{\Gamma}) &\mapsto  \int_{\Lambda_{\Gamma}}f(v)K(y,v)d\nu(v) \in \mathbb{C} .
\end{align*}

\subsubsection{Continuity}\label{sectioncontinuity}
Let  $f$ be in $L^{1}(\Lambda_{\Gamma},\nu)$. We define the function $\overline{\mathcal{K}}f$ on $\overline{GH(\Lambda_{\Gamma})}$ as the following: 
\begin{equation}\label{extension}
\overline{\mathcal{K} }f: y \in  \overline{GH(\Lambda_{\Gamma})} \mapsto  \overline{\mathcal{K}}f(y)= \left\{
    \begin{array}{ll}
       \mathcal{K}f(y) & \mbox{if } y \in GH(\Lambda_{\Gamma}) \\
        f(y) & \mbox{if }  y \in \Lambda_{\Gamma} 
    \end{array}
		\right. 
		\end{equation}

Thus, $\overline{\mathcal{K}}$ is an operator which assigns a function defined on $\overline{GH(\Lambda_{\Gamma})}$ to a function defined on $\Lambda_{\Gamma}$.

\begin{prop}\label{continuity'}
If $f$ is a continuous functions on $\Lambda_{\Gamma}$, the function $\overline{\mathcal{K}}  (f)$ is a continuous function on $\overline{GH(\Lambda_{\Gamma})}$.
 
\end{prop}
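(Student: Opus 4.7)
The plan is to check continuity of $\overline{\mathcal{K}}f$ at each point of $\overline{GH(\Lambda_{\Gamma})}$ by separating three cases according to where the point lies and how it is approached, using only the three defining properties of a Dirac-Weierstra$\ss$ family together with continuity of $f$.

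First, I would treat the interior case: a point $y_{0} \in GH(\Lambda_{\Gamma})$ approached by $y_{n} \in \overline{GH(\Lambda_{\Gamma})}$. For $n$ large enough the $y_{n}$ lie in $GH(\Lambda_{\Gamma})$, so it suffices to show that $\mathcal{K}f$ is continuous on $GH(\Lambda_{\Gamma})$. Since $K$ is jointly continuous on $GH(\Lambda_{\Gamma}) \times \Lambda_{\Gamma}$ and $\Lambda_{\Gamma}$ is compact, the map $v \mapsto K(y_{n},v)$ converges to $v \mapsto K(y_{0},v)$ uniformly on $\Lambda_{\Gamma}$, and since $f \in C(\Lambda_{\Gamma})$ is bounded, passing the limit under the integral against $d\nu$ gives $\mathcal{K}f(y_{n}) \to \mathcal{K}f(y_{0})$.

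Next, the boundary-from-boundary case: $v_{0} \in \Lambda_{\Gamma}$ approached by $v_{n} \in \Lambda_{\Gamma}$. Here $\overline{\mathcal{K}}f$ restricted to $\Lambda_{\Gamma}$ is just $f$, which is continuous by hypothesis, so $\overline{\mathcal{K}}f(v_{n}) = f(v_{n}) \to f(v_{0}) = \overline{\mathcal{K}}f(v_{0})$.

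The main step, and the only delicate one, is the mixed case: $v_{0} \in \Lambda_{\Gamma}$ approached by $y_{n} \in GH(\Lambda_{\Gamma})$. Using property (2) of Definition \ref{diracweier} to rewrite $f(v_{0}) = \int_{\Lambda_{\Gamma}} f(v_{0}) K(y_{n},v)\, d\nu(v)$, I would estimate
\[
\bigl|\mathcal{K}f(y_{n}) - f(v_{0})\bigr| \leq \int_{\Lambda_{\Gamma}} |f(v) - f(v_{0})|\, K(y_{n},v)\, d\nu(v),
\]
then split the integral at a ball $B(v_{0},r)$. On $B(v_{0},r)$, continuity of $f$ at $v_{0}$ makes $|f(v) - f(v_{0})| < \varepsilon$ for $r$ small, and property (2) bounds the resulting integral by $\varepsilon$. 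On $\Lambda_{\Gamma} \setminus B(v_{0},r)$, we bound $|f(v) - f(v_{0})|$ by $2\|f\|_{\infty}$ (valid since $\Lambda_{\Gamma}$ is compact) and invoke property (3) to conclude that the remaining integral tends to $0$ as $y_{n} \to v_{0}$. The main (minor) obstacle here is purely bookkeeping: choosing $r$ first to handle the local term, then using property (3) uniformly in $n$ for the tail, which works since property (3) gives a single limit as $y \to v_{0}$. For a general approaching sequence $z_{n} \in \overline{GH(\Lambda_{\Gamma})}$ one splits it into its $GH(\Lambda_{\Gamma})$-part and its $\Lambda_{\Gamma}$-part and applies the mixed case and the boundary-from-boundary case respectively. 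This completes continuity on all of $\overline{GH(\Lambda_{\Gamma})}$.
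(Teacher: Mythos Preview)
Your proposal is correct and follows essentially the same approach as the paper: continuity on $GH(\Lambda_{\Gamma})$ from joint continuity of $K$, and continuity at boundary points via the standard Dirac--Weierstra\ss\ split of the integral over $B(v_{0},r)$ and its complement, using properties (2) and (3). Your write-up is in fact slightly more careful than the paper's, which does not explicitly isolate the boundary-from-boundary case or the general-sequence splitting, but the underlying argument is identical.
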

\begin{proof}
 Since $K$ is a continuous function,  $\overline{\mathcal{K}}f$ is continuous on $GH(\Lambda_{\Gamma})$. Let $v_{0}$ be in $\Lambda_{\Gamma}$. 

Let $\varepsilon>0$ (we use $\varepsilon$ for this small quantity because $\epsilon $ is reserved for the visual parameter).  
Since $f$ is continuous, there exists $r>0$ such that $$|f(v_{0})-f(v)|<\frac{\varepsilon}{2},$$ whenever $v\in B(v_{0},r)$. Besides, by (3) in Definition \ref{diracweier}, there exists a neighborhood $V$ of $v_{0}$ such that for all $y\in V$ we have: $$\int_{\Lambda_{\Gamma} \backslash B(v_{0},r) } K(y,v) d\nu(v)\leq \frac{\varepsilon}{4\|f\|_{\infty}}\cdot$$ We have for all $y\in V$ : 
\begin{align*}
|\overline{\mathcal{K}}f(v_{0})-&\overline{\mathcal{K}}f(y)|=|f(v_{0})-\mathcal{K}(y)|=\bigg |\int_{\Lambda_{\Gamma}}f(v_{0})-f(v) K(y,v)d\nu(v) \bigg| \\
&\leq\int_{ B(v_{0},r) }|f(v_{0})-f(v)| K(y,v) d\nu(v)+\int_{\Lambda_{\Gamma} \backslash B(v_{0},r) }|f(v_{0})-f(v)| K(y,v) d\nu(v) \\ 
&\leq \varepsilon.
\end{align*}

Hence, $\overline{\mathcal{K}}f$ is a continuous function on $\overline{GH(\Lambda_{\Gamma})}$. 
\end{proof}

\subsubsection{An example of Dirac-Weierstra$\ss$ family}
We shall use the assumption Gromov product upper bounded by above to obtain the following lemma.

\begin{lemma}\label{visual} Let $v$ be in $\Lambda_{\Gamma}$. Then $d^{\epsilon}_{x}(v,w_{x}^{y})\rightarrow 0$ as $y\rightarrow v$.
\end{lemma}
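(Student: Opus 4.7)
The plan is to argue that the Gromov product $(v, w_x^y)_x$ tends to $+\infty$ as $y \to v$, and then convert this to a statement about the visual metric via the standard comparison \eqref{metricvisu}.

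First, I would observe that the hypothesis $y \to v$ with $v \in \Lambda_\Gamma \subset \partial X$ means, by definition of the topology on $\overline{X}$ in a $\delta$-hyperbolic space, that the extended Gromov product $(y, v)_x \to +\infty$. Since $X$ is proper and $v$ is a boundary point, this forces $d(x, y) \to +\infty$ as well (otherwise a subsequence of $y$ would converge in $X$ and the Gromov products would stay bounded, using Proposition \ref{propGromov}(2)).

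Next, by the defining inequality \eqref{gromovbounded} of $w_x^y$, we have $(y, w_x^y)_x \geq d(x,y) - c$. Applying the extended four-point inequality from Proposition \ref{propGromov}(4) with intermediate point $y$ gives
\[
(v, w_x^y)_x \;\geq\; \min\bigl\{(v, y)_x,\; (y, w_x^y)_x\bigr\} - 2\delta \;\geq\; \min\bigl\{(v, y)_x,\; d(x,y) - c\bigr\} - 2\delta.
\]
Both quantities inside the minimum tend to $+\infty$ by the previous step, so $(v, w_x^y)_x \to +\infty$.

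Finally, the right-hand inequality of the visual metric estimate \eqref{metricvisu} yields
\[
d^{\epsilon}_{x}(v, w_x^y) \;\leq\; c_m\, e^{-\epsilon (v, w_x^y)_x} \;\longrightarrow\; 0,
\]
which is the desired conclusion. I do not anticipate any serious obstacle here; the only subtlety is justifying that the two quantities in the minimum both diverge, and this is a direct consequence of the definition of convergence to a boundary point together with properness of $X$.
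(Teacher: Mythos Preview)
Your proof is correct and follows essentially the same route as the paper: both arguments reduce to showing $(v,w_x^y)_x\to+\infty$ via the four-point inequality of Proposition~\ref{propGromov}(4) and then apply the visual metric estimate~\eqref{metricvisu}. The only difference is cosmetic: the paper invokes Lemma~\ref{theta}(1) directly, which already gives $(v,w_x^y)_x\ge (v,y)_x-c-2\delta$ and so only requires $(v,y)_x\to+\infty$; you instead keep the raw minimum $\min\{(v,y)_x,\,d(x,y)-c\}$ and separately argue that $d(x,y)\to+\infty$. That extra step is fine but unnecessary, since $(v,y)_x\le d(x,y)$ always holds (so the minimum is automatically at least $(v,y)_x-c$), which is exactly what Lemma~\ref{theta}(1) packages.
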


\begin{proof}
 Let $y_{n}$ be a sequence of points of $X$ such that $y_{n}\rightarrow v$. 
 Lemma \ref{theta} (1) implies 
\begin{align*}
 (v,w^{y_{n}}_{x})_{x} \geq (v,y_{n})_{x}-c-\delta.
 \end{align*}
 
  Since $y_{n}\rightarrow v$, the quantity $(v,y_{n})_{x}$ goes to infinity and thus $d^{\epsilon}_{x}(v,w_{x}^{y})\rightarrow 0$ as $y\rightarrow v$.
\end{proof}

\begin{prop}\label{dirac}
Fix a base point $x$ in $X$ and consider the finite positive measure $\mu_{x}$. Assume that there exists a polynomial $Q_{1}$ (at least of degree 1) with positive coefficients, such that for all $y\in GH(\Lambda_{\Gamma}) \backslash B_{X}(x,R)$  for some $R>0$ we have $$Q_{1}\big(d(x,y)\big)\exp{\bigg(-\frac{\alpha}{2}d(x,y)\bigg)}\leq \varphi_{x}(y).$$ Then  $$\big(K(y,\cdot)\big)_{y\in GH(\Lambda_{\Gamma})}:=\bigg( \frac{P(x,y,.)^{1/2}}{ \varphi_{x}(y)} \bigg)_{y\in GH(\Lambda_{\Gamma})} $$ is a Dirac-Weierstra$\ss$ family with respect to $\mu_{x}$.
\end{prop}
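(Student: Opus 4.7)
The plan is to verify in turn the three conditions of Definition \ref{diracweier}. Condition (1), positivity of $K$, is immediate since $P>0$. Condition (2), the normalization $\int_{\Lambda_{\Gamma}} K(y,v)\,d\mu_x(v)=1$, is just the definition of $\varphi_x$ in \eqref{HCHcontinue} once the exponent on $P$ in the numerator of $K$ is read as $\alpha/2$ (matching the integrand of $\varphi_x$). All the content lies in the concentration condition (3).

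Fix $v_0\in\Lambda_{\Gamma}$ and $r>0$, and let $y\to v_0$ in $\overline{X}$; in particular $d(x,y)\to\infty$. The quasi-conformality in Definition \ref{quasiconform}(2), together with $\beta_v(x,y)=2(v,y)_x-d(x,y)$, yields
\begin{equation*}
P(x,y,v)^{\alpha/2} \leq C_q^{1/2}\,e^{\alpha\beta_v(x,y)/2} = C_q^{1/2}\,e^{-\alpha d(x,y)/2}\,e^{\alpha(v,y)_x}
\end{equation*}
for $\mu_x$-almost every $v$. Pulling out the factor $e^{-\alpha d(x,y)/2}$ reduces condition (3) to a uniform upper bound on $e^{\alpha(v,y)_x}$ as $v$ ranges over $\Lambda_{\Gamma}\setminus B(v_0,r)$.

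For this uniform bound I would invoke Lemma \ref{theta}(1) to get $(v,y)_x\leq(v,w_x^y)_x+c+2\delta$. By Lemma \ref{visual}, $d_x^\epsilon(v_0,w_x^y)\to 0$ as $y\to v_0$, so once $d_x^\epsilon(v_0,w_x^y)<r/2$ the triangle inequality gives $d_x^\epsilon(v,w_x^y)\geq r/2$ for every $v\notin B(v_0,r)$. The visual metric comparison \eqref{metricvisu} then provides an upper bound $(v,w_x^y)_x\leq \epsilon^{-1}\log(2c_m/r)$ that is independent of both $v$ and $y$ (for $y$ sufficiently close to $v_0$). Combining the estimates, the numerator of $\int_{\Lambda_{\Gamma}\setminus B(v_0,r)} K(y,v)\,d\mu_x(v)$ is bounded by $C'(r)\mu_x(\Lambda_{\Gamma})\,e^{-\alpha d(x,y)/2}$, while the standing hypothesis on $\varphi_x$ gives the lower bound $\varphi_x(y)\geq Q_1(d(x,y))\,e^{-\alpha d(x,y)/2}$. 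Dividing cancels the exponentials and leaves
\begin{equation*}
\int_{\Lambda_{\Gamma}\setminus B(v_0,r)} K(y,v)\,d\mu_x(v) \leq \frac{C'(r)\,\mu_x(\Lambda_{\Gamma})}{Q_1(d(x,y))},
\end{equation*}
which tends to $0$ because $Q_1$ has positive degree with positive coefficients and $d(x,y)\to\infty$.

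The bookkeeping with quasi-conformal and visual-metric constants is routine. The only delicate step is obtaining the uniformity in $v$ on the far part of $\Lambda_{\Gamma}$: this is exactly where the hypothesis that $X$ is upper Gromov product bounded by above is used, via the auxiliary boundary point $w_x^y$ and the two-sided control that Lemma \ref{theta} and Lemma \ref{visual} give on it.
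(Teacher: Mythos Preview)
Your proof is correct and follows essentially the same route as the paper: both verify condition (3) by bounding $P(x,y,v)^{\alpha/2}$ via quasi-conformality and $\beta_v(x,y)=2(v,y)_x-d(x,y)$, then use Lemma~\ref{theta}(1), Lemma~\ref{visual}, and the triangle inequality to get $d_x^\epsilon(v,w_x^y)\geq r/2$ uniformly on $\Lambda_\Gamma\setminus B(v_0,r)$, and finally cancel the exponentials against the Harish-Chandra lower bound so that only $1/Q_1(d(x,y))\to 0$ remains. The one point the paper makes explicit that you omit is the continuity of $K$ as a function of $(y,v)$, which is part of Definition~\ref{diracweier} and follows from the assumed continuity of the quasi-conformal density.
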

\begin{proof}
First of all, notice that $K$ is a continuous function by hypothesis on the quasi-conformal density. 
The statements $(1)$ and $(2)$ in Definition \ref{diracweier} are obvious, we shall only prove $(3)$.

Let $B(v_{0},r)$ be the ball of radius $r$ at $v_{0}$ in $\partial X$.
 Let $\varepsilon>0$ (we use $\varepsilon$ for this small quantity because $\epsilon $ is reserved for the visual parameter). Since $Q_{1}$ is a polynomial at least of degree one, then there exists $R'>0$ (chosen with $R'\geq R$) such that for all $y$ satisfying $d(x,y)>R'$ we have: $$\frac{ C_{r,\alpha,\delta}\|\mu_{x} \|}{Q_{1}\big(d(x,y)\big)}<\varepsilon$$ where  $C_{r,\alpha,\delta}=\big(2^{\alpha}{\rm e}^{\alpha (2\delta+c)} C_{q} c_{m}^{\alpha / \epsilon}\big)/r^{\alpha}$ is a positive constant and where $C_{q}$ and $c_{m}$ come from Definition \ref{quasiconform} and Inequality (\ref{metricvisu}).

 Lemma \ref{visual} yields a neighborhood $V$ of $v_{0}$ such that $d_{x}(v_{0},w_{x}^{y})\leq \frac{r}{2}$ for all $y\in V$. We have  for all $v$ in $\partial X \backslash B(v_{0},r)$:
\begin{align*}
d_{x}^{\epsilon}(v,w_{x}^{y})&\geq d_{x}^{\epsilon}(v,v_{0})- d_{x}^{\epsilon}(v_{0},w_{x}^{y})\\
&\geq r-d_{x}^{\epsilon}(v_{0},w_{x}^{y})\\
&\geq \frac{r}{2}.
\end{align*}
We set $V_{R'}=V\cap X\backslash B_{X}(x,R')$.
Using the assumption \emph{upper Gromov bounded by above} with the above inequality we obtain for all $y\in V_{R'}$:

\begin{align*}
\int_{\partial X \backslash B(v_{0},r)}\frac{P(x,y,v) ^{\frac{\alpha}{2}}}{ \varphi_{x}(y)}d\mu_{x}(v) 
&\leq 
C_{q}\int_{\partial X \backslash B(v_{0},r)} \frac{{\rm e}^{ \alpha (v,y)_{x}-\frac{\alpha}{2} d(x,y) }}{ \varphi_{x}(y)} d\mu_{x}(v)\\
\mbox{ Lemma \ref{theta} (1) } &\leq C_{q} c_{m}^{\alpha /\epsilon}
\int_{\partial X \backslash B(v_{0},r)}\frac{{\rm e}^{\alpha (2\delta+c)}{\rm e}^{-\frac{\alpha}{2} d(x,y)}}{d_{x}^{\epsilon}(v,w_{x}^{y})^{\alpha} \varphi_{x}(y) } d\mu_{x}(v)\\
&\leq C_{r,\alpha,\delta}  \int_{\partial X \backslash B(v_{0},r)}\frac{{\rm e}^{-\frac{\alpha}{2} d(x,y)}}{Q_{1}\big(d(x,y)\big){\rm e}^{-\frac{\alpha}{2} d(x,y)}}d\mu_{x}(v)\\
&= C_{r,\alpha,\delta}\int_{\partial X \backslash B(v_{0},r)}\frac{1}{Q_{1}\big(d(x,y)\big)}d\mu_{x}(v)\\
&\leq\frac{ C_{r,\alpha,\delta} \| \mu_{x}\|}{Q_{1}\big(d(x,y)\big)}\\
&\leq \varepsilon.
\end{align*}

Hence $$\int_{\partial X \backslash B(v_{0},r)}\frac{P(x,y,v)^{\frac{\alpha}{2}}}{ \varphi_{x}(y)}d\mu_{x}(v)\rightarrow 0 \mbox{ as } y\rightarrow v_{0}.$$

\end{proof}
We are ready to prove Proposition \ref{continuity}.
\begin{proof}[Proof of Proposition \ref{continuity}]
Since $\Gamma$ is convex cocompact, the Harish-Chandra estimates of $\varphi_{x}$ hold on $GH(\Lambda_{\Gamma})\backslash B_{X}(x,R)$ by Proposition \ref{H-CHestimates}. Hence, Proposition \ref{continuity'} combined with Proposition \ref{dirac} complete the proof.
\end{proof} 

\subsection{Inequality on nontangential maximal function}
Let $R$ be a positive real number and define for all $C>0$ the weak nontangential maximal function associated with $f\in L^{1}(\partial X,\mu_{x})$ as
\begin{equation}\label{maxfunction}
\mathcal{N}^{R}_{C}f :v\in \partial X \mapsto \sup { \big \lbrace \big| \mathcal{P}_{0}f(y) \big| \mbox{ such that } y\in \Omega _{C}(v) \backslash B_{X}(x,R)   } \big \rbrace \in \mathbb{R}^{+}.
\end{equation}

Here is the fundamental proposition of this paper and the key ingredient is the lower bound of the Harish-Chandra function or in other words the left hand side of Harish-Chandra estimates.

\begin{prop}\label{mainprop}
Assume that the visual parameter satisfies $\epsilon \leq 1$ and 
assume that $\varphi_{x}$ satisfies the Harish-Chandra estimates on $GH(\Lambda_{\Gamma}) \backslash R$. For all $C>0$, there exists $R_{0}>0$ and a constant $C_{0}>0$ such that we have $$\mathcal{N}^{R_{0}}_{C}f \leq C_{0}  \mathcal{M}f . $$ 
\end{prop}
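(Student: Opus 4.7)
The plan is to establish the stronger pointwise inequality $|\mathcal{P}_{0}f(y)| \leq C_{0}\,\mathcal{M}f(v)$ for every $v \in \partial X$ and every $y \in \Omega_{C}(v) \setminus B_{X}(x,R_{0})$, from which the desired bound on the nontangential maximal function follows by taking suprema. I would fix such $y$ and $v$, estimate the numerator $P_{0,\mu_{x}}f(y)$ from above using a dyadic decomposition of $\partial X$ centered at $v$, and estimate the denominator $\varphi_{x}(y)$ from below using the left-hand side of the Harish-Chandra estimates.

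For the pointwise bound on the Poisson kernel, I would combine the quasi-conformal property with the visual metric inequality \eqref{metricvisu} and Lemma \ref{theta}~(1) to write, for a constant $C>0$,
\begin{equation*}
P(x,y,w)^{\alpha/2} \leq C \min\!\left\{ e^{\frac{\alpha}{2}d(x,y)},\; \frac{e^{-\frac{\alpha}{2}d(x,y)}}{d_{x}^{\epsilon}(w,w_{x}^{y})^{\alpha/\epsilon}} \right\}.
\end{equation*}
Set $r_{y} := C d(x,y)^{\epsilon/\alpha} e^{-\epsilon d(x,y)}$, so that the hypothesis $y \in \Omega_{C}(v)$ gives $d_{x}^{\epsilon}(v,w_{x}^{y}) \leq r_{y}$. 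I would then split $\partial X$ into the central ball $B(v, 2 r_{y})$ and the dyadic annuli $A_{k} = B(v,2^{k+1}r_{y}) \setminus B(v,2^{k}r_{y})$ for $k \geq 1$, truncating once $2^{k}r_{y}$ exceeds the diameter of $\Lambda_{\Gamma}$. Thanks to the polynomial factor $d(x,y)^{\epsilon/\alpha}$ hidden in $r_{y}$ and to the assumption $\epsilon \leq 1$, the number of annuli is $O(d(x,y))$, which is the essential feature.

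On $B(v, 2r_{y})$ I would use the trivial (left) upper bound for the Poisson kernel together with the upper Ahlfors $D$-regularity of $\mu_{x}$ (with $D = \alpha/\epsilon$) and the definition \eqref{defimaxfonction} of the maximal function; the resulting contribution is of order $e^{\frac{\alpha}{2}d(x,y)} \cdot (2r_{y})^{D} \cdot \mathcal{M}f(v) = C' d(x,y) e^{-\frac{\alpha}{2}d(x,y)} \mathcal{M}f(v)$. On each annulus $A_{k}$ with $k \geq 1$, the triangle inequality gives $d_{x}^{\epsilon}(w,w_{x}^{y}) \geq 2^{k-1} r_{y}$, so the second (visual) upper bound is available; combined with the upper Ahlfors regularity of $\mu_{x}\big(B(v,2^{k+1}r_{y})\big)$, the $k$-dependent factors cancel and each annulus contributes a term of order $e^{-\frac{\alpha}{2}d(x,y)} \mathcal{M}f(v)$, uniformly in $k$. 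Summing over the $O(d(x,y))$ annuli yields
\begin{equation*}
\int_{\partial X} |f(w)| P(x,y,w)^{\alpha/2}\, d\mu_{x}(w) \leq C'' d(x,y)\, e^{-\frac{\alpha}{2}d(x,y)}\, \mathcal{M}f(v).
\end{equation*}

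To conclude, the left-hand Harish-Chandra estimate gives $\varphi_{x}(y) \geq Q_{1}(d(x,y))\, e^{-\frac{\alpha}{2}d(x,y)}$, and since $Q_{1}$ has degree one with positive coefficients there exists $R_{0}>0$ such that $Q_{1}(t) \geq c t$ for $t \geq R_{0}$. Dividing, the factor $d(x,y)$ cancels and yields $|\mathcal{P}_{0}f(y)| \leq C_{0}\,\mathcal{M}f(v)$ for all $y \in \Omega_{C}(v) \setminus B_{X}(x,R_{0})$. The main technical point to get right is the bookkeeping in the dyadic sum: each individual annulus has a $k$-independent contribution, so the total bears a linear factor in $d(x,y)$, and this is exactly what the nontangential width $d(x,y)^{\epsilon/\alpha}$ in the definition of $\Omega_{C}(v)$ is engineered to produce so that it can be absorbed by the lower bound on $\varphi_{x}$.
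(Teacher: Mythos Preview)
Your proposal is correct and follows essentially the same strategy as the paper: split $\partial X$ into a central ball of radius comparable to $r_y$ and dyadic annuli, use the trivial bound $P^{\alpha/2}\lesssim e^{\frac{\alpha}{2}d(x,y)}$ on the ball and the visual bound $P^{\alpha/2}\lesssim e^{-\frac{\alpha}{2}d(x,y)}d_x^\epsilon(w,w_x^y)^{-\alpha/\epsilon}$ on the annuli, invoke Ahlfors regularity together with the definition of $\mathcal{M}f$, observe that there are $O(d(x,y))$ annuli, and cancel the resulting linear factor against the lower Harish-Chandra estimate $\varphi_x(y)\geq Q_1(d(x,y))e^{-\frac{\alpha}{2}d(x,y)}$.

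One small remark: you misidentify where the hypothesis $\epsilon\leq 1$ enters. The number of dyadic annuli is $\log_2(\mathrm{Diam}/r_y)\sim \epsilon\, d(x,y)/\log 2$, which is $O(d(x,y))$ for \emph{any} fixed $\epsilon>0$; neither the polynomial factor in $r_y$ nor the bound $\epsilon\leq 1$ is relevant there. In fact, the way you organize the annulus estimate (upper Ahlfors on $\mu_x\big(B(v,2^{k+1}r_y)\big)$ cancelling directly against the kernel bound) does not use $\epsilon\leq 1$ at all. The paper instead passes through the inequality $\mu_x(B)^{-\epsilon}\leq \mu_x(B)^{-1}$ (after normalizing $\|\mu_x\|=1$), which is where it invokes $\epsilon\leq 1$; your route is slightly cleaner on this point.
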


We follow the proof of \cite{Sjo2}.

\begin{proof}
We will write $|y|$ instead of $d(x,y)$ and define for $v_{0}\in \partial X$ and $y \in X$ the ball $B(v_{0},\rho(y))$ of $\partial X$ centered at $v_{0}$ with the radius $$\rho(y):=2C |y|^{\frac{\epsilon}{\alpha}}{\rm e}^{-\epsilon |y|}.$$
Denote by $B_{\rho}$ the ball $B(v_{0},\rho(y))$ and write $\partial X= B_{\rho} \cup (\partial X \backslash B_{\rho})$. 
Let $a$ be a positive real number such that $y\in GH(\Lambda_{\Gamma}) \backslash B_{X}(x,R)  $ implies $\varphi_{x}(y)\geq Q_{1}(|y|)\geq a |y|$. Let $f$ be a positive function in $L^{1}(\partial X, \mu_{x})$ and denote by $C_{q}$ a constant coming from Definition \ref{quasiconform} (2) of quasi-conformal density. We have for all $ y \in GH(\Lambda_{\Gamma}) \backslash B_{X}(x,R)$ :

\begin{align*}
\frac{1}{C_{q}} \int_{B_{\rho}}f(v)\frac{P(x,y,v)^{\frac{\alpha}{2}}}{\varphi_{x}(y)} d\mu_{x}(v) & \leq \int_{ B_{\rho}}f(v) \frac{ {\rm e}^{\frac{\alpha}{2} \beta_{v}(x,y)}   }{Q_{1}(|y|){\rm e}^{-\frac{\alpha}{2}|y|}}d\mu_{x}(v) \\
\mbox{ Proposition \ref{busemannprop} (2) } & \leq {\rm e}^{\alpha \delta}   \int_{ B_{\rho} }f(v) \frac{ {\rm e}^{\frac{\alpha}{2} |y|}  }{Q_{1}(|y|){\rm e}^{-\frac{\alpha}{2}|y|}}d\mu_{x}(v) \\
& \leq   \frac{  {\rm e}^{\alpha \delta} }{Q_{1}(|y|){\rm e}^{-\alpha|y|}}  \int_{B_{\rho}}f(v) d\mu_{x}(v)\\
& \leq   \frac{  {\rm e}^{\alpha \delta}  }{a|y|{\rm e}^{-\alpha|y|}}  \int_{B_{\rho}}f(v) d\mu_{x}(v)\\ 
\mbox{ Ahlfors $\alpha(\Gamma) / \epsilon$-regularity of $(\Lambda_{\Gamma},d^{\epsilon}_{x},\mu_{x})$ } & \leq   \frac{ (2C)^{\alpha / \epsilon} {\rm e}^{\alpha \delta}  }{a \times k} \times \bigg( \frac{1}{\mu_{x}(B_{\rho})}  \int_{B_{\rho}}f(v) d\mu_{x}(v) \bigg)\\
&\leq \bigg( \frac{ (2C)^{\alpha / \epsilon }  {\rm e}^{\alpha \delta} }{a\times k}  \bigg) \mathcal{M}f (v_{0}),
\end{align*}

Hence for all $y \in GH(\Lambda_{\Gamma}) \backslash B_{X}(x,R)$ : 
\begin{equation}\label{ineg1fatou}
 \frac{1}{C_{q}}\int_{B_{\rho}}f(v)\frac{P(x,y,v)^{\frac{\alpha}{2}}}{\varphi_{x}(y)} d\mu_{x}(v)  \leq \bigg( \frac{ (2C)^{\alpha / \epsilon }  {\rm e}^{\alpha \delta} }{a k}  \bigg) \mathcal{M}f (v_{0}).
\end{equation}

If $ y\in \Omega_{C}(v)$ we have $d^{\epsilon}_{x}(w_{x}^{y},v_{0}) \leq C |y|^{\frac{\epsilon}{\alpha}}\exp{(- \epsilon|y|)}=\frac{1}{2}\rho(y)$.
Thus, for $v$ satisfying $d^{\epsilon}_{x}(v,v_{0})\geq \rho(y)$ we have $$ d_{x}^{\epsilon}(v,v_{0}) \leq  d_{x}^{\epsilon}(v,w^{y} _ {x} )+\frac{1}{2} d_{x}^{\epsilon}(v,v_{0}),$$
and hence 

\begin{equation}\label{inegtriang}
 d^{\epsilon}_{x}(v,w^{y}_{x})\geq \frac{d^{\epsilon}_{x}(v,v_{0})}{2} \cdot 
\end{equation}

Then, for the second term of the integral on $\partial X \backslash B_{\rho}$ we have for $y \in GH(\Lambda_{\Gamma}) \backslash B_{X}(x,R)$
\begin{align*}
\frac{1}{C_{q}}\int_{\partial X \backslash B_\rho}f(v)\frac{P(x,y,v)^{\frac{\alpha}{2}}}{\varphi_{x}(y)} d\mu_{x}(v)& \leq \int_{d^{\epsilon}_{x}(v,v_{0})>\rho(y)}f(v) \frac{ {\rm e}^{\alpha (v,y)_{x}-\frac{\alpha}{2} |y|}  }{Q_{1}(|y|){\rm e}^{-\frac{\alpha}{2}|y|}}d\mu_{x}(v) \\
&= \frac{1}{Q_{1}(|y|)}\int_{d^{\epsilon}_{x}(v,v_{0})>\rho(y)}f(v)  {\rm e}^{\alpha (v,y) _{x}} d\mu_{x}(v) \\
 \mbox{ Lemma \ref{theta} (1) } &\leq \frac{{\rm e}^{\alpha(2\delta+c)}}{a|y|}\int_{d^{\epsilon}_{x}(v,v_{0})>\rho(y)}f(v)  {\rm e}^{\alpha (v,w_{x}^{y})_{x} } d\mu_{x}(v) \\
   & \leq  \frac{{\rm e}^{\alpha(2\delta+c)} c_{m}^{\alpha / \epsilon }}{a|y|}\int_{ d^{\epsilon}_{x}(v,v_{0})>\rho(y)}f(v) \frac{ 1 }{{d^{\epsilon}_{x} (v,w_{x}^{y})^{\alpha}}} d\mu_{x}(v)\\
\mbox{ Inequality (\ref{inegtriang}) } & \leq    \frac{ 2^{\alpha}  {\rm e}^{\alpha(2\delta+c)} c_{m}^{ \alpha / \epsilon }} {a} \times \frac{1}{|y|} \int_{d_{x}^{\epsilon}(v,v_{0})>\rho(y)}f(v) \frac{ 1 }{d^{\epsilon}_{x} (v,v_{0})^{\alpha}} d\mu_{x}(v).
\end{align*}

 We work now with the term $$ \int_{d_{x}^{\epsilon}(v,v_{0})>\rho(y)}f(v) \frac{ 1 }{d^{\epsilon}_{x} (v,v_{0})^{\alpha}} d\mu_{x}(v).$$
Define $N(y)$ the integer such that $N(y)={\rm{E} } \big(\frac{ \log(D / \rho(y))}{\log 2}\big)+1$, where $D$ denotes the diameter of $\partial X \backslash B_{\rho} $ and ${\rm{E} }$ denotes the integer part. Take $R^{'}>0$ such that $|y|\geq R'$ implies $N(y)\leq c' |y|$ for some positive constant $c'>0$. We may assume that $\|\mu_{x}\|=1$ (if not, do the same computation with $\nu_{x}=\frac{\mu_{x}}{\|\mu_{x}\|}$).
We have for $\epsilon \leq 1$:
\begin{align*}
\int_{d_{x}^{\epsilon}(v,v_{0})>\rho(y)}f(v) \frac{ 1 }{d^{\epsilon} _{x}(v,v_{0})^{\alpha}} d\mu_{x}(v)&=\sum_{k=0}^{N(y)}  \int_{2^{k+1}\rho(y)>d_{x}^{\epsilon}(v,v_{0})>2^{k}\rho(y)}f(v) \frac{ 1 }{d^{\epsilon}_{x} (v,v_{0})^{\alpha}} d\mu_{x}(v)\\
&\leq\frac{1}{ \rho(y)^{\alpha}} \sum_{k=0}^{N(y)} \frac{1}{2^{\alpha k}} \int_{2^{k+1}\rho(y)>d_{x}^{\epsilon}(v,v_{0})>2^{k}\rho(y)}f(v) d\mu_{x}(v)\\
&=2^{\alpha} \sum_{k=0}^{N(y)} \frac{1}{ (2^{ k+1}\rho(y))^{\alpha}} \int_{2^{k+1}\rho(y)>d_{x}^{\epsilon}(v,v_{0})>2^{k}\rho(y)}f(v) d\mu_{x}(v)\\
&\leq2^{\alpha}c_{m}^{\epsilon} \sum_{k=0}^{N(y)}  \frac{1}{\mu_{x}\big(B(v_{0},2^{k+1}\rho(y))\big)^{\epsilon}}\int_{2^{k+1}\rho(y)>d_{x}^{\epsilon}(v,v_{0})}f(v) d\mu_{x}(v)\\
&\leq2^{\alpha}c_{m}^{\epsilon} \sum_{k=0}^{N(y)}  \frac{1}{\mu_{x}\big(B(v_{0},2^{k+1}\rho(y))\big)}\int_{2^{k+1}\rho(y)>d_{x}^{\epsilon}(v,v_{0})}f(v) d\mu_{x}(v)\\
&\leq 2^{\alpha}c_{m}^{\epsilon}  \times N(y)  \times \mathcal{M}f (v_{0}).
\end{align*}
Set $R_{0}:= \max \{R, R' \}$. It follows from the definition of $N(y)$  that $y \in GH(\Lambda_{\Gamma}) \backslash B_{X}(x,R_{0})$ implies
\begin{equation}\label{ineq''}
\frac{1}{C_{q}}\int_{\partial X \backslash B_{\rho}}f(v)\frac{P(x,y,v)^{\frac{\alpha}{2}}}{\varphi_{x}(y)} d\mu_{x}(v) \leq \bigg( \frac{ 2^{2\alpha}  {\rm e}^{\alpha(2\delta+c)} c_{m}^{ \alpha  } c'} {a}   \bigg)  \mathcal{M}f(v_{0}) .
\end{equation}
Inequality (\ref{ineg1fatou}) combined with Inequality (\ref{ineq''}) give a constant $C_{0}$ to complete the proof.

\end{proof}
\begin{proof}[Proof of Theorem \ref{Fatou}]
We follow the proof of \cite[Th\'eor\`eme 7.7, p. 172]{Ax}. Fix $x$ in $X$ and let $f$ in the Banach space $L^{1}(\partial X, \mu_{x})$ endowed with its standard norm $\|\cdot\|_{1}$. Define for all $y$ in $X$:
\begin{equation}
\mathcal{L}_{y}f:v \in \Lambda_{\Gamma} \mapsto  \big| \big(\overline{ \mathcal{P}_{0}}f\big) (y)-f(v) \big| \in \mathbb{R}^{+}
\end{equation}
and for $C>0$, define
\begin{equation}
\mathcal{L}_{C}f :v\in  \Lambda_{\Gamma} \mapsto \limsup_{\substack{y\rightarrow v \\ y\in \Omega_{C}(v)}} \big(\mathcal{L}_{y}f\big)(v)\in \mathbb{R}^{+}.
\end{equation}
We shall prove that $\mathcal{L}_{C}f=0$ almost everywhere with respect to $\mu_{x}$.
Let $\varepsilon>0$ and pick a continuous function $g$ in $C(\partial X)$ such that $\|f-g \|_{1}\leq \varepsilon$. Observe that
  Proposition \ref{continuity} implies that $\mathcal{L}_{C}g=0$ and observe also that for all $C>0$ and for $R>0$: $$\mathcal{L}_{C}f \leq \mathcal{N}_{C}^{R}f +|f|.$$  
 We have
 \begin{align*}
 \mathcal{L}_{C}f&=\mathcal{L}_{C}(f-g)+\mathcal{L}_{C}g\\
 &=\mathcal{L}_{C}(f-g)\\
&\leq \mathcal{N}_{C}^{R_{0}}(f-g)\\
& \leq C_{0} \mathcal{M}(f-g)+|f-g|,
\end{align*}
where the two positive constants $R_{0}$ and $C_{0}$ are yielded by Proposition \ref{mainprop} and the last inequality follows from Proposition \ref{mainprop}.
The maximal inequality of Proposition \ref{maxim} implies that for all $s>0$, we have 
\begin{align*}
\mu_{x} \big(\lbrace \mathcal{L}_{C}f >s\rbrace \big)& \leq 3^{\alpha}kC_{0}\frac{\| f-g\|_{1}}{s}+\frac{\| f-g\|_{1}}{s}.  
\end{align*}
Since $\|f-g \|_{1}\leq \varepsilon$, the above inequality implies that for all $s>0$ we have $$ \mu_{x} \big( \lbrace  \mathcal{L}_{C} f >s\rbrace \big) =0.$$ Hence, $$\mathcal{L}_{C}f=0$$ almost everywhere for all $C>0$. To conclude,  
define $A_{n}:=\{ \mathcal{L}_{n} f=0 \}$ for $n\in \mathbb{N^{*}}$. We have shown that $A_{n}$ is a set of full measure with respect to $\mu_{x}$ and thus $\cap_{n\geq 1 } A_{n}$ is also a set of full measure. Hence the proof is complete.
\end{proof}

\section{Application to decay of matrix coefficients of boundary representations}\label{applic}
\subsection{The weak inequality of Harish-Chandra}
Let $\Gamma$ be a convex cocompact group of isometries of a proper CAT(-1) space $X$ and let $\mu$ be a $\Gamma$-invariant conformal density of dimension $\alpha(\Gamma)$.\\
Consider a matrix coefficient associated with the boundary representation given by two essentially bounded functions $f,g \in L^{\infty}(\partial X)$, namely  $  \langle \pi_{x}(\cdot)f,g \rangle$ and observe that $|\langle \pi_{x}(\gamma)f,g\rangle|\leq \| f\|_{\infty} \|g\|_{\infty}\phi_{x}(\gamma)$ for all $\gamma \in \Gamma$. In other words, the matrix coefficient $  \langle \pi_{x}(\cdot)f,g \rangle$ satisfies the weak inequality of Harish-Chandra of degree $0$.
Nevertheless, for all functions $\xi$ which are not in $L^{\infty}$ the matrix coefficient $\langle \pi_{x}(\cdot)\textbf{1}_{\partial X},\xi \rangle $ can not satisfy the weak inequality of degree $0$ anymore. The proof is a direct application of theorem \`a la Fatou:
\begin{proof}[Proof of Corollary \ref{Fatou1}]
Let $x$ be in $GH(\Lambda_{\Gamma})$, let $\xi $ be in $L^{2}(\partial X,\mu_{x})$ and assume that the Harish-Chandra's weak inequality of degree $0$ holds: there exists $C>0$ such that for all $\gamma$ in $\Gamma$ we have $$\frac{\langle \pi_{x}(\gamma)\textbf{1}_{\partial X}, \xi \rangle}{\phi_{x}(\gamma)}\leq C .$$
Observe that for all $\gamma$ in $\Gamma$
 $$(\mathcal{P}_{0}\xi) (\gamma x)=\frac{\langle \pi_{x}(\gamma)\textbf{1}_{\partial X},\xi \rangle}{\phi_{x}(\gamma)} \cdot$$

For all $v $ is in $\Lambda^{rad}_{\Gamma}$, we know by  Lemma \ref{radial} that there exist  $C'>0$ and a sequence $(\gamma_{n})_{n \in \mathbb{N}} \in \Gamma$ such that $\gamma_{n}x \in  \Omega_{C'}(v)$ for all $n \in \mathbb{N}$.  Thus Theorem \ref{Fatou} implies that $\xi $ is in $L^{\infty}(\partial X)$.
Hence the weak inequality of Harish-Chandra of degree one must fail if  $\xi$ is not in $L^{\infty}(\partial X)$.
\end{proof}
\begin{remark}\label{distortion}
Let $\Gamma$ be a non-uniform lattice (in rank one) acting on its symmetric space endowed with its Riemannian left-invariant metric and consider its boundary representation associated with the Lebesgue measure class. Then it cannot satisfy the weak inequality of Harish-Chandra of any degree. Because if the weak inequality holds, then it would follow that the boundary representation satisfies property RD and the presence of one parabolic element is an obstruction (see for example \cite[Corollary 1.3]{B1}). Nevertheless, as we will see in Proposition \ref{propFatou} and \ref{proprob}, the decay of matrix coefficients is related to the property RD (see Remark \ref{RDrem}). 
\end{remark}
\subsection{Some results about the decay of matrix coefficients of boundary representations} Let $\Gamma$ be in $\mathcal{C}$ and consider a $\Gamma$-invariant quasi-conformal density of dimension $\alpha(\Gamma)$. Fix an origin $x$ in $X$ and denote $C_{n,\rho}(x)$ by $C_{n,\rho}$.

\subsubsection{Using a uniform boundedness}

Following the ideas of \cite{BM}, to prove the irreducibility of boundary representations  in \cite{B2} and in \cite{LG}, it is proved that a sequence of functions is uniformly bounded with respect to the $L^{\infty}$ norm. Recall the definition of such a sequence of functions $(F_{n,\rho})_{n> \rho}$:  
\begin{equation}
F_{n,\rho}:v\in \partial X \mapsto \frac{1}{|C_{n,\rho}|}\sum_{\gamma \in C_{n,\rho}}\frac{\pi_{x}(\gamma) \textbf{1}_{\partial X}}{\phi_{x}(\gamma)}(v) \in \mathbb{R}^{+}.
\end{equation}
where $\rho$ is a positive constant.

\begin{prop}\label{uniform}
If $\Gamma$ is in $\mathcal{C}$, then there exists $\rho>0$, $N_{\rho} \in \mathbb{N}$ and a positive constant $M>0$ such that $$\|F_{n,\rho} \|_{\infty}\leq M $$ for all $n>N_{\rho}$.
\end{prop}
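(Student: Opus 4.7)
The plan is to bound $F_{n,\rho}(v)$ pointwise by combining the Harish-Chandra lower bound on $\varphi_x$ with a dyadic decomposition of the annulus $C_{n,\rho}$ according to the Gromov product with $v$, and then to sum a geometric-type series whose weights are balanced against the orbital counting inside shadows.

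First I would rewrite each summand via the Busemann identity $\beta_v(x,y)=2(v,y)_x-d(x,y)$ and the quasi-conformality bound in Definition \ref{quasiconform}(2), which yields
\begin{equation*}
P(x,\gamma x,v)^{\alpha/2}\;\leq\; C_q\,e^{\alpha (v,\gamma x)_x-\alpha|\gamma|_x/2}.
\end{equation*}
Combining this with the left inequality of the Harish-Chandra estimates (Proposition \ref{H-CHestimates}), valid for $\Gamma\in\mathcal{C}$, we get, for $|\gamma|_x$ large,
\begin{equation*}
\frac{P(x,\gamma x,v)^{\alpha/2}}{\phi_x(\gamma)}\;\leq\; C\,\frac{e^{\alpha(v,\gamma x)_x}}{|\gamma|_x},
\end{equation*}
with $C$ independent of $\gamma$ and $v$. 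Since $|\gamma|_x\in[n-\rho,n+\rho]$ on $C_{n,\rho}$, the $1/|\gamma|_x$ factor can be replaced by $C'/n$.

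Next I would partition $C_{n,\rho}$ into level sets
\begin{equation*}
E_k:=\{\gamma\in C_{n,\rho}\;:\;k\leq(v,\gamma x)_x<k+1\},\qquad 0\leq k\leq n+O(1),
\end{equation*}
and establish the counting estimate $|E_k|\leq C''\,e^{\alpha(n-k)}$. The geometric content is that $\gamma\in E_k$ forces $\gamma x$ to lie within an $(n-k+O(\delta,\rho))$-neighborhood of a geodesic ray from $x$ towards $v$; the shadow of this tube at infinity has $\mu_x$-measure $\asymp e^{-\alpha(n-k)}$ by Coornaert's shadow lemma, while each orbit point in $C_{n,\rho}$ produces a shadow of measure $\asymp e^{-\alpha n}$, so disjointness (up to finite overlap) of the individual shadows forces $|E_k|\cdot e^{-\alpha n}\lesssim e^{-\alpha(n-k)}$, i.e.\ $|E_k|\lesssim e^{\alpha(n-k)}$. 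This estimate is exactly the counting input that is worked out, case by case, for the three families defining $\mathcal{C}$ in \cite{BM}, \cite{B2} and \cite{LG}; the same choice of $\rho$ made there is used here.

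Putting the pieces together,
\begin{equation*}
F_{n,\rho}(v)\;\leq\;\frac{C_1}{n\,|C_{n,\rho}|}\sum_{k=0}^{n+O(1)}e^{\alpha(k+1)}|E_k|\;\leq\;\frac{C_2(n+1)\,e^{\alpha n}}{n\,|C_{n,\rho}|},
\end{equation*}
and the orbital counting $|C_{n,\rho}|\gtrsim e^{\alpha n}$, valid for a suitable $\rho$ and large $n$ for each family in $\mathcal{C}$ (Sullivan--Patterson for convex cocompact and rank one lattices, Coornaert for hyperbolic groups), yields $F_{n,\rho}(v)\leq M$ with $M$ independent of $v$ and of $n\geq N_\rho$. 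The main obstacle is the uniformity in $v\in\partial X$ of the shadow-counting step: one must choose $\rho$ large enough (depending on $\delta$, on $c$ from the upper Gromov product bound, and on the shadow lemma constants) so that the counting inequality holds for \emph{every} $v$ and \emph{every} $n\geq N_\rho$, not just asymptotically or $\mu_x$-a.e.; this is where the specific geometric features of the groups in $\mathcal{C}$ enter, and this is the content that is established in \cite{BM}, \cite{B2}, \cite{LG}.
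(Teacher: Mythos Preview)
The paper does not give its own proof of Proposition~\ref{uniform}: it records the statement and defers entirely to \cite{BM}, \cite{B2}, \cite{LG}, where the bound is established separately for each of the three families composing $\mathcal{C}$. Your sketch is precisely the argument carried out in those references---replace $\phi_x(\gamma)$ by its linear lower bound from the Harish-Chandra estimates, expand the Poisson factor via $\beta_v(x,\gamma x)=2(v,\gamma x)_x-|\gamma|_x$, slice $C_{n,\rho}$ according to the value of $(v,\gamma x)_x$, and balance the exponential weight against a shadow-based orbital count---so the approach matches the cited proofs.

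One slip in your heuristic for the counting step: for $\gamma\in E_k$ Lemma~\ref{theta}(1) gives $(v,w_x^{\gamma x})_x\ge (v,\gamma x)_x-c-2\delta\approx k$, so the endpoints $w_x^{\gamma x}$ all lie in a ball around $v$ of radius $\asymp e^{-\epsilon k}$, whose $\mu_x$-measure is $\asymp e^{-\alpha k}$ by Ahlfors regularity, \emph{not} $e^{-\alpha(n-k)}$. The packing argument then reads $|E_k|\cdot e^{-\alpha n}\lesssim e^{-\alpha k}$, which indeed gives your claimed $|E_k|\lesssim e^{\alpha(n-k)}$; as written, your intermediate inequality $|E_k|\cdot e^{-\alpha n}\lesssim e^{-\alpha(n-k)}$ would instead yield $|E_k|\lesssim e^{\alpha k}$ and the final sum would diverge. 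With that correction the rest of your computation goes through, and your closing remark---that the delicate point is the \emph{uniformity in $v$} of the shadow/counting constants and the choice of $\rho$, which is exactly what \cite{BM}, \cite{B2}, \cite{LG} supply---is on target.
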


Indeed, we shall use the dual inequality.

 \begin{prop}\label{poissontransform'}
If $\Gamma$ is in $\mathcal{C}$, then there exists $\rho>0$, $N_{\rho}\in \mathbb{N}$ and a positive constant $M>0$ such that for all $f\in L^{1}(\partial X, \mu_{x})$ we have  $$\frac{1}{|C_{n,\rho}|} \bigg |\sum_{\gamma \in C_{n,\rho}}\mathcal{P}_{0}(f) \bigg | \leq M \|f\|_{1} $$ for all $n>N_{\rho}$.
\end{prop}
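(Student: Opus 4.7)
The plan is to observe that Proposition \ref{poissontransform'} is precisely the $L^{1}$--$L^{\infty}$ dual of Proposition \ref{uniform}, and that the duality is realized by a one-line Fubini argument.

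First I would unwind the definitions. By \eqref{boundaryrep} one has $(\pi_{x}(\gamma)\mathbf{1}_{\partial X})(v)=P(x,\gamma x,v)^{\alpha/2}$, so
\begin{equation*}
F_{n,\rho}(v)=\frac{1}{|C_{n,\rho}|}\sum_{\gamma\in C_{n,\rho}}\frac{P(x,\gamma x,v)^{\alpha/2}}{\phi_{x}(\gamma)}.
\end{equation*}
On the other hand, since $\varphi_{x}(\gamma x)=\phi_{x}(\gamma)$ (the restriction of $\varphi_{x}$ to $\Gamma x$ is $\phi_{x}$, cf.\ \eqref{HCHcontinue}), the definition of $\mathcal{P}_{0}f$ gives
\begin{equation*}
(\mathcal{P}_{0}f)(\gamma x)=\frac{1}{\phi_{x}(\gamma)}\int_{\partial X}P(x,\gamma x,v)^{\alpha/2}f(v)\,d\mu_{x}(v).
\end{equation*}

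Next I would average over the annulus and exchange the (finite) sum with the integral. This yields the key identity
\begin{equation*}
\frac{1}{|C_{n,\rho}|}\sum_{\gamma\in C_{n,\rho}}(\mathcal{P}_{0}f)(\gamma x)=\int_{\partial X}F_{n,\rho}(v)\,f(v)\,d\mu_{x}(v).
\end{equation*}
At this point Proposition \ref{uniform} supplies a $\rho>0$, $N_{\rho}\in\mathbb{N}$ and $M>0$ with $\|F_{n,\rho}\|_{\infty}\leq M$ for $n>N_{\rho}$, so H\"older's inequality on $L^{1}(\partial X,\mu_{x})\times L^{\infty}(\partial X,\mu_{x})$ gives
\begin{equation*}
\biggl|\frac{1}{|C_{n,\rho}|}\sum_{\gamma\in C_{n,\rho}}(\mathcal{P}_{0}f)(\gamma x)\biggr|\leq \|F_{n,\rho}\|_{\infty}\,\|f\|_{1}\leq M\,\|f\|_{1},
\end{equation*}
which is exactly the inequality claimed.

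Since all the analytical content is already packaged in Proposition \ref{uniform}, the only potential obstacle is making sure the notational identification $\mathcal{P}_{0}f$ evaluated at the orbit point $\gamma x$ really produces the kernel $(\pi_{x}(\gamma)\mathbf{1}_{\partial X})/\phi_{x}(\gamma)$; once that is explicit, the proof is just Fubini plus duality, and there is no need to invoke the Harish-Chandra estimates or Ahlfors regularity directly in this step (they enter only through Proposition \ref{uniform}).
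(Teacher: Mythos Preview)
Your proof is correct and follows essentially the same approach as the paper: both derive the inequality from Proposition~\ref{uniform} via the $L^{1}$--$L^{\infty}$ duality, observing that the averaged Poisson transform is the pairing $\langle f,F_{n,\rho}\rangle$. Your version is simply more explicit, spelling out the Fubini step and the identification $(\mathcal{P}_{0}f)(\gamma x)=\langle f,\pi_{x}(\gamma)\mathbf{1}_{\partial X}\rangle/\phi_{x}(\gamma)$ that the paper leaves implicit.
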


\begin{proof}
The map $$F \in L^{\infty} \mapsto \ell_{F} \in (L^{1})^{*}$$ where $\ell_{F}(f)=\langle f,F \rangle $, is an isomorphism of Banach spaces, namely $$\sup_{f \in L^{1}} \frac{ |\ell_{F}(f)|}{\| f\|_{1}}=\|F\|_{\infty} .$$ Take the dual inequality of Proposition \ref{uniform} via the Banach space isomorphism $\ell_{F_{n,\rho}}$ to obtain the result of Proposition \ref{poissontransform'}.
\end{proof}

In Proposition \ref{uniform} and in Proposition \ref{poissontransform'} we may assume that $\rho>1$. Indeed, we may choose all $\rho\geq \rho_{0}$ where $\rho_{0}$ guarantees that $|C_{n,\rho_{0}}|>0$ for $n$ large enough. \\

We need the following lemma:

\begin{lemma}\label{cspoisson} 
 Let $\mu=(\mu_x)_{x\in X}$ be a $\Gamma$-equivariant quasi-conformal density of dimension $\alpha$, where $\Gamma$ is a discrete group of isometries of $X$. Let $x$ be in $X$ and let $\xi,\eta$ be in $L^2(\partial X,\mu_x)$ be two positive functions and let $\gamma$ be in $\Gamma$. Then
\[
 \frac{\left\langle \pi_x(\gamma)\xi,\eta\right\rangle^2}{\phi_{x}(\gamma)^2}\leq
(\mathcal{P}_0\xi^2)(\gamma^{-1} x)(\mathcal{P}_0\eta^2)(\gamma  x).   
\]
\end{lemma}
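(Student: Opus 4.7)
The plan is to obtain the inequality by a single application of Cauchy--Schwarz to a weighted integral, followed by a change of variable that swaps $\gamma$ for $\gamma^{-1}$.

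First, I would unpack the matrix coefficient using \eqref{boundaryrep}, writing
\[
\langle \pi_x(\gamma)\xi,\eta\rangle = \int_{\partial X} \xi(\gamma^{-1}v)\,\eta(v)\,P(x,\gamma x,v)^{\alpha/2}\,d\mu_x(v).
\]
Since $P(x,\gamma x,\cdot)^{\alpha/2}\,d\mu_x$ is a positive finite measure, Cauchy--Schwarz against that weight yields
\[
\langle\pi_x(\gamma)\xi,\eta\rangle^{2}\le I_{1}(\gamma)\cdot I_{2}(\gamma),
\]
where $I_{1}(\gamma)=\int \xi(\gamma^{-1}v)^{2}P(x,\gamma x,v)^{\alpha/2}d\mu_x(v)$ and $I_{2}(\gamma)=\int \eta(v)^{2}P(x,\gamma x,v)^{\alpha/2}d\mu_x(v)$.

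The factor $I_{2}(\gamma)$ is immediate: by the definitions of $P_{0,\mu_x}$, $\varphi_x$ and $\phi_x$ in \eqref{HCHfunction} and \eqref{HCHcontinue}, together with the observation that $\phi_x(\gamma)=\varphi_x(\gamma x)$, one reads off
\[
I_{2}(\gamma)=\varphi_x(\gamma x)\,\mathcal{P}_{0}(\eta^{2})(\gamma x)=\phi_x(\gamma)\,\mathcal{P}_{0}(\eta^{2})(\gamma x).
\]
The factor $I_{1}(\gamma)$ is the step that does the real work. Here I would use the cocycle identity $P(x,\gamma x,v)=P(\gamma^{-1}x,x,\gamma^{-1}v)$, which follows from the $\Gamma$-invariance of $\beta_{\cdot}(\cdot,\cdot)$ (equivalently from $\gamma_{*}\mu_y=\mu_{\gamma y}$ combined with the chain rule for Radon--Nikodym derivatives). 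Substituting and then performing the change of variable $w=\gamma^{-1}v$, which by $\Gamma$-invariance converts $d\mu_x(v)$ into $d\mu_{\gamma^{-1}x}(w)$, one gets
\[
I_{1}(\gamma)=\int_{\partial X}\xi(w)^{2}\,P(\gamma^{-1}x,x,w)^{\alpha/2}\,d\mu_{\gamma^{-1}x}(w).
\]
Rewriting the measure $d\mu_{\gamma^{-1}x}=P(x,\gamma^{-1}x,\cdot)^{\alpha}\,d\mu_x$ and using $P(\gamma^{-1}x,x,w)=P(x,\gamma^{-1}x,w)^{-1}$, the two Poisson factors collapse into a single $P(x,\gamma^{-1}x,w)^{\alpha/2}$, giving
\[
I_{1}(\gamma)=\varphi_x(\gamma^{-1}x)\,\mathcal{P}_{0}(\xi^{2})(\gamma^{-1}x)=\phi_x(\gamma^{-1})\,\mathcal{P}_{0}(\xi^{2})(\gamma^{-1}x)=\phi_x(\gamma)\,\mathcal{P}_{0}(\xi^{2})(\gamma^{-1}x),
\]
where \eqref{inverse} is used in the last equality. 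Multiplying the expressions for $I_{1}(\gamma)$ and $I_{2}(\gamma)$ and dividing by $\phi_{x}(\gamma)^{2}$ gives exactly the claim.

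The only point requiring a little care is the bookkeeping in the $I_{1}$ computation: one must consistently apply the equivariance identity $\gamma_{*}\mu_x=\mu_{\gamma x}$ to the measure, the cocycle identity to the Poisson kernel, and invoke $\phi_x(\gamma)=\phi_x(\gamma^{-1})$ at the end. There is no analytic difficulty beyond Cauchy--Schwarz and a change of variable; in particular no use is made of the quasi-conformal bound $C_{q}$.
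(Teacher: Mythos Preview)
Your proof is correct and follows essentially the same route as the paper: both apply Cauchy--Schwarz to the integral defining $\langle\pi_x(\gamma)\xi,\eta\rangle$ against the weight $P(x,\gamma x,\cdot)^{\alpha/2}d\mu_x$, and then identify the two resulting factors. The only cosmetic difference is that the paper handles your $I_1(\gamma)$ in one line by invoking the unitarity of $\pi_x$, writing $\langle\pi_x(\gamma)\xi^2,\mathbf{1}_{\partial X}\rangle=\langle\pi_x(\gamma^{-1})\mathbf{1}_{\partial X},\xi^2\rangle$, whereas you carry out the equivalent change of variable by hand via the cocycle identity for $P$; the content is the same.
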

\begin{proof} The Cauchy-Schwarz inequality implies :
	\begin{align*}
	 \left\langle \pi_x(\gamma)\xi,\eta\right\rangle^2
	&=\left(\int_{\partial X}\xi(\gamma^{-1}v)
	P(x,\gamma  x,v)^{\frac{\alpha}{2}}\eta(v)d\mu_x(v)\right)^2\\
	&\leq \bigg(\int_{\partial X}\xi(\gamma^{-1}v)^2
	P(x,\gamma x,v)^{\frac{\alpha}{2}}d\mu_x(v)\bigg)\bigg(
	\int_{\partial X}
	P(x,\gamma  x,v)^{\frac{\alpha}{2}}\eta(v)^2d\mu_x(v)\bigg)\\
	&=\left\langle \pi_x(\gamma)\xi^2,1_{\partial_X}\right\rangle
	\left\langle \pi_x(\gamma)1_{\partial_X},\eta^2\right\rangle\\
	&=\left\langle \pi_x(\gamma^{-1})1_{\partial_X},\xi^2\right\rangle
	\left\langle \pi_x(\gamma)1_{\partial_X},\eta^2\right\rangle.
	\end{align*}
Divide this inequality by
 $\phi_{x}(\gamma)^2$ and use Definition (\ref{sjogrennot}) to complete the proof.   
	
\end{proof}

We can prove Proposition \ref{propFatou}:

\begin{proof}[Proof of Proposition \ref{propFatou}]

It is sufficent to prove that there exists a polynomial $Q$ of degree $2$ such that  for all positive $\xi $ and for all sufficiently large integers $k$ 
\begin{equation}\label{inegintermed}
\sum_{C_{k,\rho}}\langle \pi_{x}(\gamma)\textbf{1}_{\partial X},\xi \rangle^{2}\leq Q(k)\|\xi\|_{2}^{2}.
\end{equation}
Recall that there exists $R>0$ such that the Harish-Chandra estimates hold on $GH(\Lambda_{\Gamma}) \backslash B_{X}(x,R)$.
Thus all integers $k$ such that $k \geq R +\rho$ we have

\begin{align*}
\sum_{C_{k,\rho}}\langle \pi_{x}(\gamma)\textbf{1}_{\partial X},\xi \rangle^{2}&=\sum_{C_{k,\rho}}\phi_{x}^{2}(\gamma) \frac{  \langle\pi_{x}(\gamma)\textbf{1}_{\partial X},\xi \rangle^{2} }{\phi_{x}^{2}(\gamma)}\\
\mbox{(upper bound of the Harish-Chandra estimates)}&\leq Q^{2}_{2}(k)  {\rm e}^{-\alpha(\Gamma) k} \sum_{C_{k,\rho}}  \frac{  \langle\pi_{x}(\gamma)\textbf{1}_{\partial X},\xi \rangle^{2} }{\phi_{x}^{2}(\gamma)}\\
\mbox{(Lemma \ref{cspoisson})}&\leq Q^{2}_{2}(k) {\rm e}^{-\alpha(\Gamma) k} \sum_{C_{k,\rho}}  \frac{  \langle\pi_{x}(\gamma)\textbf{1}_{\partial X},\xi^{2} \rangle }{\phi_{x}(\gamma)}\\
&= Q^{2}_{2}(k) {\rm e}^{-\alpha(\Gamma) k} \sum_{C_{k,\rho}} \mathcal{P}_{0}(\xi^{2})\\
&\leq C' Q^{2}_{2}(k) \frac{1}{|C_{k,\rho}|} \sum_{C_{k,\rho}} \mathcal{P}_{0}(\xi^{2})\\
\end{align*}

The last inequality follows from the fact that for convex cocompact groups, hyperbolic groups and lattices in rank one, we can find $C'>0$ such that $${\rm e}^{-\alpha(\Gamma) k} \leq C' \frac{1}{|C_{k,\rho}|}\cdot $$ Apply Proposition \ref{poissontransform'} to the last inequality of the above computation to obtain Inequality (\ref{inegintermed}) with a polynomial $Q$ of degree $2$.
This finishes the proof. 
\end{proof}

\subsubsection{With a naive application of a theorem of T. Roblin}\label{roblin}
Recall the equidistribution theorem that we shall use and that we have already used in the paper \cite{B2}. We refer to \cite[Preliminaires]{B2} for the definition of arithmetic spectrum and for the definition of Bowen-Margulis-Sullivan measure. Notice that convex cocompact groups with non-arithmetic spectrum and lattices in rank one semisimple Lie groups satisfy the hypothesis of the following theorem.

Let $D_{z}$ be the unit Dirac mass at $z$ and let $m_{\Gamma}$ be the Bowen-Margulis-Sullivan measure.

 \begin{theorem}\label{roblin}(T. Roblin)
Let $\Gamma$ be a discrete group of isometries of $X$ with a non-arithmetic spectrum. Assume that $\Gamma$ admits a finite Bowen-Margulis-Sullivan measure associated with a $\Gamma$-invariant conformal density $\mu$ of dimension $\alpha=\alpha(\Gamma)$. Then, for all $x,y \in X$ we have: $$\alpha e^{-\alpha n}||m_{\Gamma}||\sum_{ \left\{\gamma \in \Gamma|d( x,\gamma  y)< n  \right\}}D_{\gamma^{-1}  x} \otimes D_{\gamma  y} \rightharpoonup \mu_{x} \otimes \mu_{y} $$ as $n\rightarrow +\infty$  with respect to the weak* convergence of $C(\overline{X} \times \overline{X})^{*}$.
\end{theorem}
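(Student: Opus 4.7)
The plan is to derive this equidistribution result from the mixing of the geodesic flow $(g_t)$ on $T^1X/\Gamma$ with respect to the Bowen--Margulis--Sullivan measure $m_\Gamma$. The non-arithmetic spectrum hypothesis is precisely what secures this mixing (in the spirit of Babillot's theorem), and it is the one dynamical input one cannot avoid. The whole strategy is to reinterpret the sum $\sum_{d(x,\gamma y)<n} D_{\gamma^{-1}x}\otimes D_{\gamma y}$ as a count of orbit matchings of the geodesic flow, translate this count into a correlation $m_\Gamma(A\cap g_n B)$ using the Hopf parametrization, apply mixing, and then recover the analytic factor $\alpha e^{-\alpha n}\|m_\Gamma\|$ by a summation-by-slices (Abel) argument.

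The geometric bridge goes through the Hopf parametrization: on $T^1X$ (away from the diagonal) a vector is identified with a triple $(v^-,v^+,t)\in\partial X\times\partial X\times\mathbb R$, and $m_\Gamma$ disintegrates in these coordinates as
\[
d\tilde m_\Gamma(v^-,v^+,t)=e^{\alpha(v^-,v^+)_o}\,d\mu_o(v^-)\,d\mu_o(v^+)\,dt,
\]
which is $\Gamma$-invariant and $(g_t)$-invariant. Given small open sets $U\ni x$ and $V\ni y$ in $X$ and small open sets $\xi\subset\partial X$ near a chosen $v^-$ and $\eta\subset\partial X$ near a chosen $v^+$, one builds \emph{flow boxes} $K_U=\{w\in T^1X:w^-\in\xi,\ w^+\in\eta,\ w\text{ meets }U\}$ and analogously $K_V$ around $y$. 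Using Coornaert's shadow lemma, the $m_\Gamma$-mass of the projections of $K_U,K_V$ to $T^1X/\Gamma$ is controlled by $\mu_o(\xi)\mu_o(\eta)$ up to bounded multiplicative error. The key combinatorial observation is that a vector $w\in K_U$ with $g_n w\in\gamma K_V$ corresponds bijectively (with bounded multiplicity) to an element $\gamma\in\Gamma$ for which $d(x,\gamma y)$ lies in a window of size $O(1)$ around $n$, together with the information that $\gamma^{-1}x\in$ a neighborhood of $\xi$ and $\gamma y\in$ a neighborhood of $\eta$.

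Applying mixing yields, for annuli $A_n=\{\gamma:d(x,\gamma y)\in[n,n+\rho)\}$,
\[
\alpha\,\rho\,e^{-\alpha n}\|m_\Gamma\|\!\!\sum_{\gamma\in A_n}\mathbf 1_\xi(\gamma^{-1}x)\mathbf 1_\eta(\gamma y)\;\longrightarrow\;\mu_o(\xi)\mu_o(\eta).
\]
From this annular statement one passes to the ball-counting statement of the theorem by a summation-by-parts argument: write $\sum_{d(x,\gamma y)<n}=\sum_{k<n}\sum_{A_k}$ and use the geometric series $\sum_{k<n}e^{-\alpha(n-k)}\to(\alpha)^{-1}$ (after the appropriate normalization by $\alpha e^{-\alpha n}$) to recover the precise prefactor $\alpha e^{-\alpha n}\|m_\Gamma\|$. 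Finally, testing against a product function $\varphi(v^-)\psi(v^+)$ and invoking Stone--Weierstrass (tensor products are dense in $C(\overline X\times\overline X)$) promotes the convergence on box test functions to weak-$*$ convergence in $C(\overline X\times\overline X)^*$; the extension from $\partial X$-supported test functions to $\overline X$-supported ones is automatic since the Dirac masses $D_{\gamma^{-1}x}$, $D_{\gamma y}$ accumulate on $\partial X$ only, and $\mu_o$ is carried by $\Lambda_\Gamma\subset\partial X$.

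The hardest part will be step three, the careful bookkeeping between flow boxes and the discrete count. One must show both that (a) the bounded-multiplicity correspondence between $\{w\in K_U:g_n w\in\gamma K_V\}$ and $\{\gamma:d(x,\gamma y)\approx n,\ \gamma^{-1}x\in\xi',\ \gamma y\in\eta'\}$ produces \emph{exactly} the expected Jacobian via the shadow lemma, and (b) the error terms coming from the $O(1)$ slack in $d(x,\gamma y)$, from the thickness of the flow boxes, and from the difference between the endpoints $\gamma^{-1}x,\gamma y\in X$ and the actual endpoints $v^-,v^+\in\partial X$ of the relevant geodesic, all vanish after the Abel resummation. The non-arithmeticity of the length spectrum intervenes precisely to guarantee that the mixing rate is strong enough (or, more modestly, that the averaged mixing suffices) so that these approximation errors do not overwhelm the main term $\mu_o(\xi)\mu_o(\eta)$.
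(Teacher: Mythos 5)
This statement is not proved in the paper at all: it is quoted verbatim as an external result of T.~Roblin (\cite[Chapitre~4]{Ro}, via the preliminaries of \cite{B2}) and used as a black box in the proof of Proposition~\ref{proprob}. So there is no internal proof to compare against; what you have written is an attempt to reconstruct Roblin's own argument. Your strategic outline is indeed the right one and matches Roblin's: qualitative mixing of the geodesic flow for the Bowen--Margulis--Sullivan measure (secured by non-arithmeticity of the length spectrum, \`a la Babillot), the Hopf parametrization $d\tilde m_\Gamma = e^{\alpha(v^-,v^+)_o}d\mu_o(v^-)d\mu_o(v^+)dt$, flow boxes built from shadows, a correspondence between $\{w\in K_U : g_nw\in\gamma K_V\}$ and orbit points with $d(x,\gamma y)\approx n$ and prescribed directions, and finally a resummation over annuli. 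One clarification: no mixing \emph{rate} is needed anywhere; Roblin's error control comes entirely from shrinking the flow boxes and from monotonicity of the counts in $n$, so your closing worry about the mixing rate being ``strong enough'' is not the actual difficulty.

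The genuine gap is the one you flag yourself: step three, the bookkeeping between the flow-box correlation $m_\Gamma(K_U\cap g_{-n}\gamma K_V)$ and the discrete count, is the entire content of the theorem and is not carried out. Without it one cannot verify that the Jacobian $e^{\alpha(v^-,v^+)_o}$ produced by the Hopf disintegration cancels correctly against the shadow-lemma normalization, nor that the $O(1)$ slack in $d(x,\gamma y)$ and the discrepancy between $\gamma^{-1}x,\gamma y\in X$ and the ideal endpoints $v^-,v^+\in\partial X$ wash out; this is precisely where the upper and lower bounds require two nested families of boxes and a limiting argument. There are also two normalization slips in the part you did write: for annuli of width $\rho$ the correct prefactor is $\frac{\alpha}{e^{\alpha\rho}-1}e^{-\alpha n}\Vert m_\Gamma\Vert$ (which is $\sim \rho^{-1}e^{-\alpha n}\Vert m_\Gamma\Vert$ as $\rho\to 0$), not $\alpha\rho\, e^{-\alpha n}\Vert m_\Gamma\Vert$; and the unit-step geometric series $\sum_{k<n}e^{-\alpha(n-k)}$ tends to $(e^{\alpha}-1)^{-1}$, not $\alpha^{-1}$ --- the factor $\alpha^{-1}$ in the statement only emerges after letting the annulus width tend to $0$, i.e.\ from the integral $\int_0^\infty e^{-\alpha s}\,ds$. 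As it stands the proposal is a faithful roadmap to Roblin's proof rather than a proof.
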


\begin{proof}[Proof of Proposition \ref{proprob}]

Assume first that $x$ is in $GH(\Lambda_{\Gamma})$, let $\rho>0$ and let $f$ and $g$ in $L^{2}(\partial X,\mu_{x})$. Recall that $C_{n,\rho}(x)=\{  n-\rho \leq |\gamma|_{x}<n+\rho \}$.
Lemma \ref{cspoisson} implies $$\left|\left\langle \pi_{x}(\gamma)f,g \right\rangle\right|^{2}\leq \varphi_{x}(\gamma)^{2} (D_{\gamma^{-1} x} \otimes D_{\gamma  x} )(\mathcal{P}_{0}f^{2}\otimes \mathcal{P}_{0}g^{2}). $$
If $\gamma$ is in $C_{n,\rho}(x)$, the upper bound of the Harish-Chandra estimates (see Proposition \ref{H-CHestimates}) implies that there exists $Q_{2}$ such that
$$\varphi_{x}(\gamma)^{2}\leq Q_{2}(n)e^{-\alpha(\Gamma) (n+\rho)}.$$

It follows that for n large enough we have: $$\sum_{\gamma \in C_{n,\rho}(x)}\left|\left\langle \pi_{x}(\gamma)f,g \right\rangle\right|^{2}\leq Q_{2}(n) {\rm e}^{-\alpha(\Gamma) (n+\rho)}\sum_{\gamma \in C_{n,\rho}(x)}  (D_{\gamma^{-1} x} \otimes D_{\gamma x} )(\mathcal{P}_{0}f^{2}\otimes \mathcal{P}_{0}g^{2}) .$$
If $f$ and $g$ are continuous functions, the lower bound of Harish-Chandra estimates  combined with Proposition \ref{continuity} imply that $\overline{\mathcal{P}_{0}}f^{2}\otimes \overline{\mathcal{P}_{0}}g^{2}$
 is a continuous function on $\overline{X}\times \overline{X}$. Set $$Q=\frac{1}{\alpha(\Gamma)\|m_{\Gamma} \|} Q_{2}\cdot $$ Since $C_{n,\rho}(x)\subset \Gamma_{n+\rho}(x)$, we have for $n$ large enough: 
$$ \frac{1}{Q(n)}\sum_{\gamma \in C_{n,\rho}(x)}\left|\left\langle \pi_{x}(\gamma)f,g \right\rangle\right|^{2}\leq \|m_{\Gamma}\|\alpha(\Gamma) {\rm e}^{-\alpha(\Gamma)(n+\rho)}\sum_{\gamma \in \Gamma_{n+\rho}(x)}  (D_{\gamma^{-1} x} \otimes D_{\gamma x} )(\overline{\mathcal{P}_{0}}f^{2}\otimes \overline{\mathcal{P}_{0}}g^{2}) .$$ 
 Theorem \ref{roblin} implies for all $\rho>0$:
 \begin{align*}
\limsup_{n\rightarrow +\infty} \frac{1}{Q(n)}\sum_{\gamma \in C_{n,\rho}(x)}\left|\left\langle \pi_{x}(\gamma)f,g \right\rangle\right|^{2}&\leq \lim_{n\rightarrow +\infty} \|m_{\Gamma}\|\alpha(\Gamma) {\rm e}^{-\alpha(\Gamma) (n+\rho)}\sum_{\gamma \in \Gamma_{n+\rho}(x)}  (D_{\gamma^{-1} x} \otimes D_{\gamma x} )(\overline{\mathcal{P}_{0}}f^{2}\otimes \overline{\mathcal{P}_{0}}g^{2})\\
 &=(\mu_{x}\otimes \mu_{x})(\overline{\mathcal{P}_{0}}f^{2}\otimes \overline{\mathcal{P}_{0}}g^{2})\\
 &=\mu_{x}(\overline{\mathcal{P}_{0}}f^{2}) \mu_{x}(\overline{\mathcal{P}_{0}}g^{2}) \\
 &=\mu_{x}(f^{2}) \mu_{x}(g^{2}) \\
 &=\| f \|^{2}_{2} \|g\|^{2}_{2}. 
 \end{align*}
 Hence we have proved for all $x$ in $GH(\Lambda_{\Gamma})$, forall $\rho>0$, there exists $ Q $ such that  
  \begin{equation}\label{stp1}
 \limsup_{n\rightarrow +\infty} \frac{1}{Q(n)}\sum_{\gamma \in C_{n,\rho}(x)}\left|\left\langle \pi_{x}(\gamma)f,g \right\rangle\right|^{2} \leq \| f \|^{2}_{2} \|g\|^{2}_{2}.
 \end{equation}
 If $x$ is not in $GH(\Lambda_{\Gamma})$, pick $x'$ in $GH(\Lambda_{\Gamma})$ and observe that 
 \begin{equation}\label{couronne}
 C_{n,\rho}(x)\subset C_{n,\rho+2d(x,x')}(x').
 \end{equation}
 
 Then, define the multiplication operator $M_{x,x'}$ defined as 
 \begin{equation}\label{operatormul}
 M_{x,x'}:\xi \in L^{2}(\partial X, \mu_{x}) \mapsto m_{x,x'} \xi \in L^{2}(\partial X, \mu_{x'}),
 \end{equation}
 where $$m_{x,x'}:v\in \partial X \mapsto {\rm e}^{\frac{\alpha}{2} \beta_{v} (x,x')},$$
 which is an isometry and which intertwines $\pi_{x}$ and $\pi_{x'}$, namely $$\pi_{x'}M_{x,x'}=M_{x,x'}\pi_{x}.$$ Fix $\rho>0$ and use the operator $M_{x,x'}$ to apply Inequality (\ref{stp1}) with $x'$, $\rho+2d(x,x')$ to complete the proof. 
 
 \end{proof}

\section{Harish-Chandra's Schwartz algebra of discrete groups}\label{HCS}
\subsection{Results}
Recall that the space $C_{c}(\Gamma)$ is an algebra for the convolution product defined as $$f_{1}\ast f_{2} (g)=\sum_{\gamma \in \Gamma} f_{1}(\gamma)f_{2}(\gamma^{-1}g),~~\forall g\in \Gamma.$$ It is natural to ask if the Harish-Chandra's Schwartz space $\mathcal{S}_{t}(\Gamma)$, for $t$ large enough, has a structure of a convolution algebra. The answer is positive. The only difficulty was to find suitable conditions which play analogous roles of the conditions $(HC3c)$ and $(HC3d)$ in \cite[Chapitre 4, p. 82]{L}. We replace the averaging process over the compact $K$ in the context of semisimple Lie groups by sums over annuli, based on Proposition \ref{poissontransform'}. \\

Fix $x$ an origin in $X$ and denote $d(x,\gamma x)$ by $|\gamma|$, $\{\gamma \in  \Gamma  \mbox{ such that } |\gamma|\leq n \}$ by $\Gamma_{n}$ and $\Gamma_{n} \backslash \Gamma _{n-1}$ by $C_{n}$.

\subsection{An inequality for the Harish-Chandra function}
\begin{lemma}\label{trick2}
 Let $\Gamma$ be in $\mathcal{C}$. There exists $t_{0}$ such that for all $t\geq t_{0}$ there exists $C_{t}$ satisfying for all $g$ in $\Gamma$: \begin{align*}\sum_{ \gamma \in \Gamma } \frac{\phi_{x}(g\gamma^{-1})\phi_{x}(\gamma)}{(1+|\gamma|)^{t}}\leq C_{t} \phi_{x}(g).
\end{align*}
\end{lemma}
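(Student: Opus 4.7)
The plan is to reduce the lemma to the averaging inequality of Proposition~\ref{poissontransform'}. I introduce the positive function $f_g(v) := P(x,g^{-1}x,v)^{\alpha/2}$, which lies in $L^1(\partial X,\mu_x)$ with $\|f_g\|_1 = \varphi_x(g^{-1}x) = \phi_x(g)$ by (\ref{inverse}). Factoring $\pi_x(g\gamma^{-1}) = \pi_x(g)\pi_x(\gamma^{-1})$, using unitarity of $\pi_x$, and observing that $\pi_x(h^{-1})\mathbf{1}_{\partial X}(v) = P(x,h^{-1}x,v)^{\alpha/2}$ for every $h \in \Gamma$, one obtains the exact identity
$$\phi_x(g\gamma^{-1}) \;=\; \bigl\langle \pi_x(\gamma^{-1})\mathbf{1}_{\partial X},\ \pi_x(g^{-1})\mathbf{1}_{\partial X}\bigr\rangle \;=\; \int_{\partial X} P(x,\gamma^{-1}x,v)^{\alpha/2}\,f_g(v)\,d\mu_x(v) \;=\; \phi_x(\gamma)\,\mathcal{P}_0(f_g)(\gamma^{-1}x),$$
so that each summand factors as $\phi_x(g\gamma^{-1})\phi_x(\gamma) = \phi_x(\gamma)^2\,\mathcal{P}_0(f_g)(\gamma^{-1}x)$.

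I will then group the sum by the annuli $C_{n,\rho}(x)$, with $\rho$ the fixed constant from Proposition~\ref{poissontransform'}. Since $\Gamma \in \mathcal{C}$, Proposition~\ref{H-CHestimates} furnishes a degree-one polynomial $Q_2$ with $\phi_x(\gamma)^2 \leq Q_2(n+\rho)^2\,e^{-\alpha(\Gamma)(n-\rho)}$ for every $\gamma \in C_{n,\rho}$, once $n$ exceeds a fixed $R>0$. The annulus $C_{n,\rho}$ is invariant under $\gamma \mapsto \gamma^{-1}$, so reindexing and applying Proposition~\ref{poissontransform'} to $f_g$ yields
$$\sum_{\gamma \in C_{n,\rho}} \mathcal{P}_0(f_g)(\gamma^{-1}x) \;=\; \sum_{\gamma \in C_{n,\rho}} \mathcal{P}_0(f_g)(\gamma x) \;\leq\; M\,|C_{n,\rho}|\,\phi_x(g).$$
Combined with the orbital upper bound $|C_{n,\rho}| \leq C_\rho\,e^{\alpha(\Gamma)\,n}$, which holds for every group in $\mathcal{C}$ and is already used in the proof of Proposition~\ref{propFatou}, this gives
$$\sum_{\gamma \in C_{n,\rho}} \phi_x(g\gamma^{-1})\phi_x(\gamma) \;\leq\; C\,Q_2(n+\rho)^2\,\phi_x(g).$$

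Finally I partition $\Gamma$ as a bounded core together with a disjoint union of annuli $C_{n_k,\rho}$ with $n_k$ in arithmetic progression of step $2\rho$. On each such annulus one has $(1+|\gamma|)^t \geq (1+n_k-\rho)^t$, so that
$$\sum_{\gamma \in \Gamma} \frac{\phi_x(g\gamma^{-1})\phi_x(\gamma)}{(1+|\gamma|)^t} \;\leq\; C_R\,\phi_x(g) \;+\; C\,\phi_x(g) \sum_{k \geq 1} \frac{Q_2(n_k+\rho)^2}{(1+n_k-\rho)^t}.$$
The bounded core contributes $C_R\phi_x(g)$ via the elementary estimate $\phi_x(g\gamma^{-1}) \leq C_R\,\phi_x(g)$ coming from the Harish-Chandra estimates together with Proposition~\ref{busemannprop}. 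Since $\deg Q_2^2 = 2$, the remaining series converges as soon as $t > 3$, giving the claim with $t_0 = 3$. The principal technical point will be verifying the orbital upper bound on $|C_{n,\rho}|$ uniformly across the class $\mathcal{C}$, which follows from Patterson--Sullivan orbital counting in the negatively-curved settings and from exponential word growth at the critical exponent $\alpha(\Gamma)$ in the hyperbolic-group case.
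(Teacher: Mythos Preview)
Your proof is correct and follows essentially the same route as the paper: both arguments rewrite $\phi_x(g\gamma^{-1})\phi_x(\gamma)=\phi_x(\gamma)^2\,\mathcal{P}_0(f_g)(\gamma^{-1}x)$ with $f_g=\pi_x(g^{-1})\mathbf{1}_{\partial X}$, bound $\phi_x(\gamma)^2$ on each annulus via the Harish--Chandra estimates and orbital counting, and then invoke Proposition~\ref{poissontransform'}. One small inaccuracy: the proof of Proposition~\ref{propFatou} actually uses the orbital \emph{lower} bound $|C_{k,\rho}|\ge c\,e^{\alpha(\Gamma)k}$, not the upper bound you need here; the upper bound is nonetheless standard for every group in $\mathcal{C}$ (Coornaert for hyperbolic groups, Sullivan/Roblin in the CAT$(-1)$ cases).
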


\begin{proof}[Proof.]
 Let $g$ be in $\Gamma$ and observe that for all $t>0$ and for all integer $N$ we have:
\begin{equation}\label{sommedecoup}
\sum_{ \gamma \in \Gamma } \frac{\phi_{x}(g\gamma^{-1})\phi_{x}(\gamma)}{(1+|\gamma|)^{t}} =\sum_{ \gamma \in \Gamma_{N} } \frac{\phi_{x}(g\gamma^{-1})\phi_{x}(\gamma)}{(1+|\gamma|)^{t}} +\sum^{\infty}_{n=N+1} \sum_{ \gamma \in C_{n} } \frac{\phi_{x}(g\gamma^{-1})\phi_{x}(\gamma)}{(1+|\gamma|)^{t}}. 
\end{equation}
Consider the first term: observe that Cauchy-Schwarz inequality implies that for all $\gamma \in \Gamma$ we have $\phi_{x}(\gamma)\leq 1$. Then observe also that $\phi_{x}(g\gamma^{-1})=\langle \pi_{x}(g) (\pi(\gamma^{-1})\textbf{1}_{\partial X}), \textbf{1}_{\partial X} \rangle$. Since the function $\pi(\gamma^{-1})\textbf{1}_{\partial X}$ is a positive continuous function on a compact set and since the representation $\pi_{x}$ preserves the cone of positive function we have $$ \phi_{x}(g\gamma^{-1})\leq C_{\gamma}\phi_{x}(g), $$ where $C_{\gamma}:=\sup \lbrace \pi(\gamma^{-1})\textbf{1}_{\partial X}(v), v\in \partial X \rbrace$ is some positive constant. Hence for all $t>0$ and for all $N$ there exists a positive constant $C'$ such that \begin{equation}\label{term1}
 \sum_{ \gamma \in \Gamma_{N} } \frac{\phi_{x}(g\gamma^{-1})\phi_{x}(\gamma)}{(1+|\gamma|)^{t}}\leq C'\phi_{x}(g).
\end{equation}

Consider now the second term:\\
Since $\Gamma$ is in $\mathcal{C}$ recall  that for all $\rho>0$ there exists a polynomial $Q$ such that for all integers $n$ large enough and for all $\gamma \in C_{n,\rho}$ we have: 
\begin{equation}\label{HCHvolume}
\phi_{x}(\gamma)\leq Q(n)\frac{1}{|C_{n,\rho}|}.
\end{equation}
 
Let $\rho>1 $ such that $C_{n}\subset C_{n,\rho}$.
Using Inequality (\ref{HCHvolume})  we have for all $n\geq \rho $:
\begin{align*}
\sum_{ \gamma \in C_{n} } \phi_{x}(g\gamma^{-1})\phi_{x}(\gamma)&=\sum_{ \gamma \in C_{n} }\phi_{x}(\gamma)^{2} \frac{\phi_{x}(g\gamma^{-1})}{\phi_{x}(\gamma)}\\
&\leq \frac{Q(n)}{|C_{n,\rho}|}\sum_{ \gamma \in C_{n} }  \frac{\phi_{x}(g\gamma^{-1})}{\phi_{x}(\gamma)}\\
&=  \frac{Q(n)}{|C_{n,\rho}|}\sum_{ \gamma \in C_{n,\rho} } \frac{\left\langle \pi_{x}(g\gamma^{-1})1,1\right\rangle}{\phi_{x}(\gamma)}\\
&= \frac{Q(n)}{|C_{n,\rho}|}\sum_{ \gamma \in C_{n,\rho} } \frac{\left\langle \pi_{x}(\gamma)1,\pi_{x}(g^{-1})1\right\rangle}{\phi_{x}(\gamma)}\\
&= \frac{Q(n)}{|C_{n,\rho}|}\sum_{ \gamma \in C_{n,\rho} } D_{\gamma x}( \mathcal{P}_{0}f)
\end{align*}
where $$f=\pi_{x}(g^{-1})1\in L^{1}(\partial X,\mu_{x}).$$ \\
 Observe that Equality (\ref{inverse}) implies $$||\pi_{x}(g^{-1})1||_{1}=\phi_{x}(g^{-1})=\phi_{x}(g).$$
 Applying Propostion \ref{poissontransform'} to $f$ we have for $n$ large enough: $$ \sum_{ \gamma \in C_{n} }\phi_{x}(g\gamma^{-1})\phi_{x}(\gamma)\leq Q(n)\phi_{x}(g).$$
 
Thus there exists $t_{0}$ such that for all $t\geq t_{0}$ there exists a constant $C_{t}>0$ satisfying for all $g\in \Gamma$: $$\sum_{ \gamma \in \Gamma } \frac{\phi_{x}(g\gamma^{-1})\phi_{x}(\gamma)}{(1+|\gamma|)^{t}}\leq C_{t} \phi_{x}(g).$$
\end{proof}
\subsection{Convolution algebra}
\begin{prop} \label{Laff1}
Let $\Gamma$ be in $\mathcal{C}$. There exists $t_{0}>0$, such that for all $t\geq t_{0}$ we have  $\mathcal{S}_{t}(\Gamma)\ast \mathcal{S}_{t}(\Gamma)\subset \mathcal{S}_{t}(\Gamma)$.
\end{prop}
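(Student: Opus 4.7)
The plan is to bound $|f_1 \ast f_2(g)|$ pointwise in terms of $\phi_x(g)/(1+|g|)^t$ using only Lemma \ref{trick2}. Let $t_0$ be the threshold from Lemma \ref{trick2} and fix $t \geq t_0$. For $f_1, f_2 \in \mathcal{S}_t(\Gamma)$, the definition of the norm gives $|f_i(\gamma)| \leq \|f_i\|_{\mathcal{S}_t(\Gamma)} \phi_x(\gamma)/(1+|\gamma|)^t$, hence
$$|f_1 \ast f_2(g)| \leq \|f_1\|_{\mathcal{S}_t(\Gamma)}\|f_2\|_{\mathcal{S}_t(\Gamma)} \sum_{\gamma \in \Gamma} \frac{\phi_x(\gamma)\,\phi_x(\gamma^{-1}g)}{(1+|\gamma|)^{t}(1+|\gamma^{-1}g|)^{t}}.$$

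The key splitting trick is the triangle inequality $|g| \leq |\gamma| + |\gamma^{-1}g|$, which forces at least one of $|\gamma| \geq |g|/2$ or $|\gamma^{-1}g| \geq |g|/2$ to hold. I would decompose $\Gamma$ accordingly into two (overlapping) subsets, and on each of them bound the corresponding factor: in the first, $(1+|\gamma|)^{-t} \leq 2^{t}(1+|g|)^{-t}$, and in the second, $(1+|\gamma^{-1}g|)^{-t} \leq 2^{t}(1+|g|)^{-t}$. This pulls a factor $2^{t}/(1+|g|)^{t}$ out of each sub-sum, leaving one copy of $(1+\cdot)^{t}$ in the denominator.

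The remaining sums are exactly the hypothesis of Lemma \ref{trick2}, up to a change of variable. On the region $\{|\gamma| \geq |g|/2\}$, substituting $\beta = \gamma^{-1}g$ (so $\gamma = g\beta^{-1}$) transforms the sum into $\sum_\beta \phi_x(g\beta^{-1})\phi_x(\beta)/(1+|\beta|)^t$, which Lemma \ref{trick2} bounds by $C_t \phi_x(g)$. On the region $\{|\gamma^{-1}g| \geq |g|/2\}$, we need the ``dual'' inequality
$$\sum_{\gamma \in \Gamma} \frac{\phi_x(\gamma^{-1}g)\,\phi_x(\gamma)}{(1+|\gamma|)^{t}} \leq C_t \phi_x(g);$$
this follows from Lemma \ref{trick2} itself: using (\ref{inverse}) one rewrites $\phi_x(g\gamma^{-1}) = \phi_x(\gamma g^{-1})$ and substitutes $\gamma \mapsto \gamma^{-1}$ and $g \mapsto g^{-1}$ in the statement of Lemma \ref{trick2}, noting $\phi_x(g^{-1}) = \phi_x(g)$.

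Combining the two regions yields
$$|f_1 \ast f_2(g)| \leq 2^{t+1}C_t\,\|f_1\|_{\mathcal{S}_t(\Gamma)}\|f_2\|_{\mathcal{S}_t(\Gamma)}\,\frac{\phi_x(g)}{(1+|g|)^{t}},$$
which shows simultaneously that $f_1 \ast f_2$ is a well-defined element of $\mathcal{S}_t(\Gamma)$ and that $\|f_1 \ast f_2\|_{\mathcal{S}_t(\Gamma)} \leq 2^{t+1}C_t\,\|f_1\|_{\mathcal{S}_t(\Gamma)}\|f_2\|_{\mathcal{S}_t(\Gamma)}$. No step in this argument is hard once Lemma \ref{trick2} is in hand; the only minor subtlety is spotting the symmetric (``dual'') form of Lemma \ref{trick2} needed for the second region, which is why invoking $\phi_x(\gamma) = \phi_x(\gamma^{-1})$ is essential.
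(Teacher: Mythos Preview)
Your proof is correct and essentially identical to the paper's, which explicitly follows Lafforgue \cite[Proposition 4.3]{L}: the paper splits according to $|\gamma|\le |g|/2$ versus $|\gamma|>|g|/2$ (a disjoint version of your overlapping split), pulls out $2^{t}/(1+|g|)^{t}$ on each piece, applies the change of variable $\gamma'=\gamma^{-1}g$ on one piece, and then invokes Lemma~\ref{trick2} together with Equality~(\ref{inverse}) for the other, exactly as you do. Your explicit constant $2^{t+1}C_t$ and the resulting submultiplicativity of $\|\cdot\|_{\mathcal{S}_t(\Gamma)}$ are a nice bonus the paper leaves implicit.
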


\begin{proof}[Proof.] 
We follow the proof of V. Lafforgue in \cite[Proposition 4.3]{L} . Let $t>0$ and let $f_{1},f_{2}$ be in $\mathcal{S}_{t}(\Gamma)$. We have for all $\gamma \in \Gamma$ $$|f_{i}(\gamma)|\leq C_{i}\frac{\phi_{x}(\gamma)}{(1+|\gamma|)^{t}}$$ for some $C_{i}>0$ with $i=1,2$. Thus for all $g\in \Gamma$ we have
\begin{align*}
\frac{1}{C_{1}C_{2}}|f_{1}&\ast f_{2} (g)|=\frac{1}{C_{1}C_{2}}\sum_{\gamma \in \Gamma} f_{1}(\gamma)f_{2}(\gamma^{-1}g)\\
&\leq  \sum_{\gamma \in \Gamma} \frac{\phi_{x}(\gamma)}{(1+|\gamma|)^{t}}\frac{\phi_{x}(\gamma^{-1}g)}{(1+|\gamma^{-1}g|)^{t}}\\
&= \sum_{ \left\{|\gamma|\leq \frac{|g|}{2} \right\}} \frac{\phi_{x}(\gamma)}{(1+|\gamma|)^{t}}\frac{\phi_{x}(\gamma^{-1}g)}{(1+|\gamma^{-1}g|)^{t}} +\sum_{ \left\{ |\gamma|> \frac{|g|}{2} \right\}} \frac{\phi_{x}(\gamma)}{(1+|\gamma|)^{t}}\frac{\phi_{x}(\gamma^{-1}g)}{(1+|\gamma^{-1}g|)^{t}}.
\end{align*}
Observe that $ |\gamma|\leq \frac{|g|}{2} \Rightarrow |\gamma^{-1}g|\geq \frac{|g|}{2}$.		
							
We have for the first sum:
\begin{align*}
\sum_{ \left\{ |\gamma|\leq \frac{|g|}{2} \right\}} \frac{\phi_{x}(\gamma)}{(1+|\gamma|)^{t}}\frac{\phi_{x}(\gamma^{-1}g)}{(1+|\gamma^{-1}g|)^{t}} &\leq\frac{2^{t}}{(1+|g|)^{t}} \sum_{ \left\{ |\gamma|\leq \frac{|g|}{2} \right\}} \frac{\phi_{x}(\gamma)\phi_{x}(\gamma^{-1}g)}{(1+|\gamma|)^{t}} \\
& \leq\frac{ 2^{t}}{(1+|g|)^{t}} \sum_{ \gamma \in \Gamma} \frac{\phi_{x}(\gamma) \phi_{x}(\gamma^{-1}g)}{(1+|\gamma|)^{t}}.
\end{align*}			
									
We have for the second sum:
\begin{align*}
\sum_{ \left\{ |\gamma|> \frac{|g|}{2} \right\}} \frac{\phi_{x}(\gamma)}{(1+|\gamma|)^{t}}\frac{\phi_{x}(\gamma^{-1}g)}{(1+|\gamma^{-1}g|)^{t}}
&\leq \frac{2^{t}}{(1+|g|)^{t}}  \sum_{ \left\{ |\gamma|> \frac{|g|}{2} \right\}} \frac{\phi_{x}(\gamma)\phi_{x}(\gamma^{-1}g)}{(1+|\gamma^{-1}g|)^{t}}\\
& \leq \frac{2^{t}}{(1+|g|)^{t}}  \sum_{ \gamma \in \Gamma } \frac{\phi_{x}(\gamma)\phi_{x}(\gamma^{-1}g)}{(1+|\gamma^{-1}g|)^{t}}\\
&= \frac{2^{t}}{(1+|g|)^{t}}\sum_{ \gamma \in \Gamma } \frac{\phi_{x}(g\gamma^{-1})\phi_{x}(\gamma)}{(1+|\gamma|)^{t}}
\end{align*}							
where the last inequality comes from the change of variable $\gamma'=\gamma^{-1}g$.	\\

			Lemma \ref{trick2} implies that the two terms are bounded by $\phi_{x}(g)$ for all $t\geq t_{0}$ for some positive real number $t_{0}$. Hence, there exists $t_{0}$ such that for all $t\geq t_{0}$ we have for all $g\in \Gamma$: $$f_{1}\ast f_{2} (g)\leq  B_{t}\frac{\phi_{x}(g)}{(1+|g|)^{t}},$$ for some positive constant $B_{t}$.						
								\end{proof}									
	\begin{prop}\label{Laff2}
There exists $t_{0}>0$, such that for all $t\geq t_{0}$ we have  $\mathcal{S}_{t}(\Gamma)\ast \ell^{2}( \Gamma)\subset \ell^{2}( \Gamma)$.
\end{prop}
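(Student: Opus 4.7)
The plan is to follow the pattern of Proposition \ref{Laff1} and deduce the containment $\mathcal{S}_t(\Gamma) * \ell^2(\Gamma) \subset \ell^2(\Gamma)$ from the stronger quantitative bound
\[
\|f * \xi\|_2 \leq C_t\, \|f\|_{\mathcal{S}_t(\Gamma)}\, \|\xi\|_2
\qquad (f \in \mathcal{S}_t(\Gamma),\ \xi \in C_c(\Gamma)),
\]
which extends by density to all $\xi \in \ell^2(\Gamma)$. The central idea is to apply a weighted Cauchy--Schwarz inequality with the distinguished weight $v(\gamma):=\phi_x(\gamma)/(1+|\gamma|)^t$, for which $\|v\|_{\mathcal{S}_t(\Gamma)}=1$, and then to recognize that the resulting sums are exactly of the form already controlled by Proposition \ref{Laff1} applied to $f_1=f_2=v$.

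Concretely, I would first write
\[
|f * \xi(g)|^2 \;\leq\; \bigg(\sum_\gamma |f(\gamma)|\, v(\gamma^{-1}g)\bigg)\bigg(\sum_\gamma \frac{|f(\gamma)|\, |\xi(\gamma^{-1}g)|^2}{v(\gamma^{-1}g)}\bigg).
\]
Using $|f(\gamma)|\leq \|f\|_{\mathcal{S}_t} v(\gamma)$, the first factor is bounded by $\|f\|_{\mathcal{S}_t}\,(v*v)(g)$, and Proposition \ref{Laff1} applied pointwise to $v*v$ gives $(v*v)(g)\leq B_t\, v(g)$; so the first factor is at most $B_t \|f\|_{\mathcal{S}_t}\, v(g)$.

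Summing in $g$, pulling out this upper bound, interchanging the order of summation, and substituting $h = \gamma^{-1}g$ in the $g$-sum reduces the remaining contribution to
\[
B_t \|f\|_{\mathcal{S}_t} \sum_h \frac{|\xi(h)|^2}{v(h)}\sum_\gamma |f(\gamma)|\, v(\gamma h).
\]
The symmetry $v(\gamma)=v(\gamma^{-1})$, which follows from Eq.~(\ref{inverse}) and $|\gamma^{-1}|=|\gamma|$, permits the change of variable $\gamma\mapsto\gamma^{-1}$ identifying $\sum_\gamma v(\gamma)v(\gamma h)$ with $(v*v)(h)$; a second application of Proposition \ref{Laff1} then gives $\sum_\gamma |f(\gamma)| v(\gamma h) \leq B_t\|f\|_{\mathcal{S}_t}\,v(h)$. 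The factor $v(h)$ cancels $1/v(h)$, what remains is $\|\xi\|_2^2$, and we arrive at $\|f*\xi\|_2^2 \leq B_t^2\|f\|_{\mathcal{S}_t}^2\|\xi\|_2^2$, as desired.

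The only delicate point, and the one which I expect to be the main place requiring care, is the identification $\sum_\gamma v(\gamma)\,v(\gamma h)=(v*v)(h)$ via the symmetry of $v$; it is this identity which allows Proposition \ref{Laff1} to be applied on \emph{both} sides of the weighted Cauchy--Schwarz and thereby produces the clean $\ell^2$-$\ell^2$ estimate from what is essentially the pointwise estimate of Proposition \ref{Laff1}. Everything else is an application of Proposition \ref{Laff1} combined with Fubini.
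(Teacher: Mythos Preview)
Your argument is correct and follows essentially the same route as the paper: a weighted Cauchy--Schwarz inequality, the key summation estimate applied once to bound the ``bad'' factor pointwise by a multiple of the weight at $g$, then Fubini and a second application of the same estimate after the change of variable $h=\gamma^{-1}g$, with the symmetry $\phi_x(\gamma)=\phi_x(\gamma^{-1})$, $|\gamma|=|\gamma^{-1}|$ used to recognise the inner sum.

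The only difference is cosmetic: the paper chooses the Cauchy--Schwarz weight $\phi_x(\gamma^{-1}g)$ and invokes Lemma~\ref{trick2} directly at both steps, whereas you choose the full weight $v(\gamma^{-1}g)=\phi_x(\gamma^{-1}g)/(1+|\gamma^{-1}g|)^t$ and invoke the pointwise conclusion of Proposition~\ref{Laff1} (namely $(v\ast v)(g)\le B_t\,v(g)$) twice. Since Proposition~\ref{Laff1} is itself proved from Lemma~\ref{trick2}, the two arguments are logically equivalent; your packaging has the mild advantage that everything is expressed in terms of a single object $v$ and the single Banach-algebra inequality $v\ast v\le B_t v$, while the paper's version avoids the extra polynomial factor in the weight and appeals to Lemma~\ref{trick2} in its stated form.
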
								
\begin{proof}[Proof.]
We follow the proof of V. Lafforgue in \cite[Proposition 5.2]{L}.
Let $h$ be in $\ell^{2}(\Gamma)$. It is enough to prove that for all $t$ large enough there exists a constant $C_{t}>0$ such that $$\sum_{g\in \Gamma} \left( \sum_{\gamma \in \Gamma} \frac{\phi_{x}(\gamma)}{(1+|\gamma|)^{t}}h(\gamma ^{-1} g)\right)^{2}\leq C_{t}\|h\|^{2}_{2}. $$

First, we use Cauchy-Schwarz inequality to obtain: 
\begin{equation}\label{CSine}
\left( \sum_{\gamma \in \Gamma} \frac{\phi_{x}(\gamma)}{(1+|\gamma|)^{t}}h(\gamma ^{-1} g)\right)^{2}\leq \left( \sum_{\gamma \in \Gamma} \frac{\phi_{x}(\gamma)}{(1+|\gamma|)^{t}}\frac{h^{2}(\gamma ^{-1} g)}{\phi_{x}(\gamma^{-1}g)}\right)\left( \sum_{\gamma \in \Gamma} \frac{\phi_{x}(\gamma^{-1}g) \phi_{x}(\gamma)}{(1+|\gamma|)^{t}} \right).
\end{equation} 

Lemma \ref{trick2} implies that there exists $t_{0}>0$ such that for all $t\geq t_{0}$ there exists $C_{t}$ satisfying for all $g\in \Gamma$:
\begin{equation}\label{inegC}
 \sum_{\gamma \in \Gamma} \frac{\phi_{x}(\gamma^{-1}g) \phi_{x}(\gamma)}{(1+|\gamma|)^{t}} \leq C_{t} \phi_{x}(g). 
\end{equation}

Combining Inequality (\ref{CSine}) with Inequality (\ref{inegC}) we obtain for all $t\geq t_{0}$:
\begin{align*}
\sum_{g\in \Gamma} \left( \sum_{\gamma \in \Gamma} \frac{\phi_{x}(\gamma)}{(1+|\gamma|)^{t}}h(\gamma ^{-1} g)\right)^{2}&\leq C_{t} \sum_{(\gamma,g) \in \Gamma \times \Gamma} \frac{\phi_{x}(\gamma)}{(1+|\gamma|)^{t}}\frac{h^{2}(\gamma ^{-1} g)}{\phi_{x}(\gamma^{-1}g)}\phi_{x}(g) \\
&= C_{t} \sum_{(\gamma,\gamma') \in \Gamma \times \Gamma} \frac{\phi_{x}(\gamma)}{(1+|\gamma|)^{t}}\frac{h^{2}(\gamma')}{\phi_{x}(\gamma')}\phi_{x}(\gamma \gamma') \\
&= C_{t} \sum_{\gamma' \in \Gamma} \left( \sum_{\gamma \in \Gamma} \frac{\phi_{x}(\gamma)\phi_{x}(\gamma \gamma')}{(1+|\gamma|)^{t}}\right)  \frac{h^{2}(\gamma')}{\phi_{x}(\gamma')}\\
&\leq C_{t}^{2}\|h\|^{2}_{2}< +\infty,
\end{align*}
where the last equality comes from a slight modification of Lemma \ref{trick2} using Equality (\ref{inverse}). The proof is complete.
\end{proof} 
\begin{remark}\label{c*alg}
Proposition \ref{Laff2} proves that we have the following representation  of $\mathcal{S}_{t}(\Gamma)$ for $t$ large enough: \begin{align*}\mathcal{S}_{t}(\Gamma)&\rightarrow B(\ell^{2}(\Gamma))\\
f&\mapsto M_{f}
\end{align*}
where $M_{f}\xi=f \ast \xi$ for all $\xi \in \ell^{2}(\Gamma)$ and $\|M_{f}\xi\|_{2}\leq C\|f\|_{S_{t}(\Gamma)} \|\xi\|_{2}$ for some positive constant $C$ depending on $t$. It follows that $$\|M_{f}\|_{op} \leq C \|f\|_{S_{t}(\Gamma)},$$ where $\|\cdot\|_{op}$ denotes the operator norm.
\end{remark}

The proof of Proposition \ref{LG} is almost done:

\begin{proof}[Proof.]
Proposition \ref{Laff1} proves that $\mathcal{S}_{t}(\Gamma)$ has the structure of a convolution algebra for $t$ large enough. Proposition \ref{Laff2} shows that $\mathcal{S}_{t}(\Gamma)$ for $t$ large enough is represented in the algebra of bounded operators on $\ell^{2}(\Gamma)$. Since $C_{c}(\Gamma)\subset \mathcal{S}_{t}(\Gamma)$ for all $t>0$ and thanks to Remark \ref{c*alg} it is clear that $\mathcal{S}_{t}(\Gamma)$, for $t$ large enough, generates on $\ell^{2}(\Gamma)$ the reduced $C^*$-algebra and the von Neumann algebra of $\Gamma$.
\end{proof}

\end{document}